\newtheorem{definition}{Definition}[section]
\newtheorem{lem}[definition]{Lemma}
\newtheorem{thm}[definition]{Theorem}
\newtheorem*{remark}{Remark}
\numberwithin{equation}{section}
\newcommand{\ZZ}{\mathbb{Z}}
\newcommand{\QQ}{\mathbb{Q}}
\newcommand{\CC}{\mathbb{C}}
\newcommand{\abs}[1]{\left\lvert#1\right\rvert}
\DeclareMathOperator{\ord}{ord}
\DeclareMathOperator{\Sym}{Sym}
\DeclareMathOperator{\Ad}{Ad}
\DeclareMathOperator{\GL}{GL}
\DeclareMathOperator{\PGL}{PGL}
\DeclareMathOperator{\Gal}{Gal}
\DeclareMathOperator{\pr}{pr}
\title{Comparing Hecke eigenvalues for pairs of automorphic representations for \rm{GL(2)}}
\author{Kin Ming Tsang}
\address{Department of Mathematics, University of British Columbia}
\email{kmtsang@math.ubc.ca}
\subjclass[2020]{11F30, 11F41}
\keywords{automorphic representation, Hecke eigenvalues, strong multiplicity one}
\begin{document}

\begin{abstract}
We consider a variant of the strong multiplicity one theorem. Let $\pi_{1}$ and $\pi_{2}$ be two unitary cuspidal automorphic representations for $\mathrm{GL(2)}$ that are not twist-equivalent. We find a lower bound for the lower Dirichlet density of the set of places for which $\left\lvert a_{v}(\pi_{1}) \right\rvert > \left\lvert a_{v}(\pi_{2}) \right\rvert$, where $a_{v}(\pi_{i})$ is the trace of the Langlands conjugacy class of $\pi_{i}$ at $v$.
One consequence of this result is an improvement on the existing bound on the lower Dirichlet density of the set of places for which $\left\lvert a_{v}(\pi_{1})\right\rvert \neq \left\lvert a_{v}(\pi_{2}) \right\rvert$.
\end{abstract}

\maketitle

\section{Introduction}
Let $F$ be a number field and $\mathbb{A}_{F}$ its ad\'ele ring. Suppose $\pi_{1}$ and $\pi_{2}$ are unitary cuspidal automorphic representations of $\GL_{n}(\mathbb{A}_{F})$. Let $a_{v}(\pi_{i})$ denote the trace of the Langlands conjugacy class of $\pi_{i}$ at an unramified finite place $v$.
A question of interest is to determine global equivalence by comparing local data. For instance, if
\begin{align*}
    S = S(\pi_{1}, \pi_{2}) = \{ v \vert \, v \text{ unramified for both } \pi_{1} \text{ and } \pi_{2} , \, a_{v}(\pi_{1}) \neq a_{v}(\pi_{2}) \} ,
\end{align*}
what information on $S$ would guarantee that $\pi_{1}$ and $\pi_{2}$ are globally isomorphic ($\pi_{1} \simeq \pi_{2}$)?

Fix $n=2$. Jacquet and Shalika \cite{Jacquet_Shalika_on_Euler_product_II_1981} showed that if $S$ is finite, then $\pi_{1} \simeq \pi_{2}$, which is now referred to as the strong multiplicity one theorem. 
Ramakrishnan \cite{Ramakrishnan_Refinement_of_Strong_Multiplcity_One_1994} strengthened this result by showing that it suffices that $\underline{\delta}(S) < \frac{1}{8}$, where $\underline{\delta}(S)$ denotes the lower Dirichlet density of $S$. This bound is sharp, as demonstrated by an example of Serre involving a pair of dihedral representations \cite{Serre_modular_forms_Galois_rep_1977}.
A natural question is whether the bound can be improved if the dihedral representations are excluded. Walji \cite{Walji_Strong_Multiplicity_One_GL2_2014} showed that if dihedral representations are excluded, it suffices that $\underline{\delta}(S) < \frac{1}{4}$.

Questions of a similar nature can be asked by comparing local data in different ways to obtain global statements. Let
\begin{align*}
    S_{\ast} = S_{\ast}(\pi_{1}, \pi_{2}) = \{ v \mid v \text{ unramified for both } \pi_{1} \text{ and } \pi_{2}, \, \abs{a_{v}(\pi_{1})} \neq \abs{a_{v}(\pi_{2})} \} .
\end{align*}
Wong \cite{Wong_Refinements_Strong_Multiplcity_One_2022} showed that if $\pi_{1}$ and $\pi_{2}$ are not twist-equivalent, then $\underline{\delta}(S_{\ast}) \geq \frac{1}{10.76}$. Related questions have also been studied by Chiriac and Jorza \cite{Chiriac_comparing_Hecke_newforms_2017, Chiriac_Jorza_comparing_Hecke_auto_repn_2019}.
Let
\begin{align*}
    S_{\ast}^{>} = S_{\ast}^{>}(\pi_{1}, \pi_{2}) = \{ v \mid v \text{ unramified for both } \pi_{1} \text{ and } \pi_{2}, \, \abs{a_{v}(\pi_{1})} > \abs{a_{v}(\pi_{2})} \} .
\end{align*}
We determine the twist equivalence of $\pi_{1}$ and $\pi_{2}$ through the estimation of $\underline{\delta}(S_{\ast}^{>}(\pi_{1}, \pi_{2}))$ depending on a classification of $\pi_{1}$ and $\pi_{2}$ (see Section \ref{subsec:solvable_polyhedral_repn} for further details).
\begin{thm}\label{thm:main_theorem}
Let $\pi_{1}$ and $\pi_{2}$ be cuspidal automorphic representations for $\GL_{2}(\mathbb{A}_{F})$ with unitary central characters. Assume that $\pi_{1}$ and $\pi_{2}$ are not twist-equivalent. Then
\begin{align*}
    \underline{\delta}(S_{\ast}^{>}(\pi_{1}, \pi_{2})) \geq \frac{1}{16} .
\end{align*}
If we further assume that both $\pi_{1}$ and $\pi_{2}$ are non-solvable polyhedral, then
\begin{align*}
    \underline{\delta}(S_{\ast}^{>}(\pi_{1}, \pi_{2})) \geq \frac{1}{13.929} .
\end{align*}
\end{thm}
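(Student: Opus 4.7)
The plan is to set $b_v := |a_v(\pi_1)|^2 - |a_v(\pi_2)|^2$, so that $S_\ast^> = \{v : b_v > 0\}$, and then extract a lower bound on $\underline{\delta}(S_\ast^>)$ from the first and second moments of $(b_v)$ computed via Dirichlet series. Using the Gelbart--Jacquet adjoint lift and the identity $|a_v(\pi)|^2 = a_v(\Ad \pi) + 1$ (valid at every unramified place for unitary $\pi$), one has $b_v = a_v(\Ad\pi_1) - a_v(\Ad\pi_2)$, where $\Ad\pi_i$ is an automorphic representation of $\GL_3(\mathbb{A}_F)$, cuspidal when $\pi_i$ is non-dihedral. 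I would treat the non-dihedral case first and handle dihedral $\pi_i$ via the explicit isobaric decomposition of $\Ad\pi_i$ into characters attached to a quadratic extension.

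The two required asymptotics as $s \to 1^+$ are: (1) the first-moment bound $\sum_v b_v / N(v)^s = O(1)$, which follows from $L(s, \Ad\pi_i)$ being entire and nonvanishing at $s = 1$; and (2) the second-moment asymptotic $\sum_v b_v^2 / N(v)^s \sim 2 \log \tfrac{1}{s-1}$, which follows because each Rankin--Selberg factor $L(s, \Ad\pi_i \times \widetilde{\Ad\pi_i})$ contributes a simple pole at $s = 1$, while the cross factor $L(s, \Ad\pi_1 \times \widetilde{\Ad\pi_2})$ is holomorphic at $s = 1$ precisely because $\Ad\pi_1 \not\simeq \Ad\pi_2$, which is equivalent (for $\GL_2$) to the non-twist-equivalence of $\pi_1$ and $\pi_2$. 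Combining with the tempered bound $|b_v| \leq 4$ (valid on a set of density $1$, with non-tempered places controlled by Kim--Sarnak) yields $b_v^2 \leq 4|b_v|$, and hence
\[
\sum_v |b_v|/N(v)^s \geq \tfrac{1}{4}\sum_v b_v^2/N(v)^s \sim \tfrac{1}{2}\log \tfrac{1}{s-1}.
\]
Writing $(b_v)_+ = (|b_v| + b_v)/2$ and using (1), one gets $\sum_v (b_v)_+ / N(v)^s \geq \tfrac{1}{4} \log\tfrac{1}{s-1} + O(1)$, and since $(b_v)_+ \leq 4 \cdot \mathbf{1}_{S_\ast^>}(v)$, it follows that $\sum_{v \in S_\ast^>} N(v)^{-s} \geq \tfrac{1}{16} \log\tfrac{1}{s-1} + O(1)$, giving $\underline{\delta}(S_\ast^>) \geq 1/16$.

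For the non-solvable polyhedral improvement to $1/13.929$, I would exploit the fact that for icosahedral $\pi_i$ the attached Galois representation has finite image in $\PGL_2(\CC)$, so the symmetric powers $\Sym^k \pi_i$ admit non-trivial automorphic decompositions. This enables either a sharpened second-moment asymptotic (by identifying additional poles in Rankin--Selberg factorizations involving $\Ad\pi_i$ and $\Sym^k\pi_i$) or a finer bound on $|b_v|$ in a moment-weighted sense; the stated constant $1/13.929$ should emerge from optimizing the resulting refined density inequality. The main obstacle is this numerical optimization in the polyhedral case, together with the separate and somewhat delicate treatment of the dihedral subcase of the general bound, where $\Ad\pi_i$ is reducible and the pole analysis must be carried out using the explicit Grossencharacters of the relevant quadratic extensions.
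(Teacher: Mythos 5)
Your approach is genuinely different from the paper's, but it has a gap that makes the argument conditional rather than unconditional. The crucial step is the pointwise bound $|b_v| \le 4$, which is equivalent to the Ramanujan--Petersson conjecture for $\pi_1$ and $\pi_2$. This is a theorem for dihedral, tetrahedral, and octahedral representations (where the associated Galois image is finite), but it remains open for general cuspidal $\GL_2$ representations, in particular in the non-solvable polyhedral case that the theorem must cover. The Kim--Sarnak bound only gives $|\alpha_{v,j}| \le Nv^{7/64}$, so $|b_v|$ can grow with $Nv$, and there is no density-one temperedness theorem available that would let you discard exceptional places --- and even if there were, a density-zero exceptional set could still contribute a positive proportion of $\sum_v b_v^2/Nv^s$ as $s \to 1^+$. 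Consequently both $b_v^2 \le 4|b_v|$ and $(b_v)_+ \le 4\cdot \mathbf{1}_{S_\ast^>}(v)$ fail pointwise, and the descent from the second-moment asymptotic to a density lower bound does not go through.

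The paper circumvents this by working with the asymmetric weight $(A_v - B_v)(A_v + 1)$, which is nonpositive off $S_\ast^>$ using only $A_v + 1 = |a_v(\pi_1)|^2 \ge 0$, and then applying Cauchy--Schwarz against the indicator of $S_\ast^>$. This trades the pointwise Ramanujan bound for moment estimates, namely the pole orders of triple and quadruple Rankin--Selberg products $L^T(s, \Pi_1^{\times k} \times \Pi_2^{\times \ell})$ recorded in Lemma \ref{lem:order_of_L_function_both_non_dihedral}; Ramanujan is invoked only in the dihedral (and solvable polyhedral) cases where it is known. Your sketch for the $1/13.929$ bound also omits the main obstruction: for non-solvable polyhedral $\pi_1$, the analytic behavior of $L^T(s, \Pi_1 \times \Pi_1 \times \Pi_1 \times \Pi_2)$ at $s=1$ is unknown, which is precisely why the paper replaces the quartic Cauchy--Schwarz bound with the Cauchy--Schwarz-plus-triangle inequality \eqref{eqn:second_inequality_Cauchy_triangle}, from which the constant $(2+\sqrt{3})^{-2}$ arises. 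Exploiting finite $\PGL_2$-image does not help here, since a non-solvable polyhedral cuspidal representation need not correspond to an icosahedral Galois representation.
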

\begin{remark}
The bound of $\frac{1}{16}$ is sharp, as shown by a pair of tetrahedral automorphic representations (see Section \ref{subsec:tetrahedral_example}).
\end{remark}
Our proof of Theorem \ref{thm:main_theorem} can be used to improve the bounds on $\underline{\delta}(S_{\ast}(\pi_{1}, \pi_{2}))$ for non-twist-equivalent automorphic representations $\pi_{1}$ and $\pi_{2}$. 
\begin{thm}\label{thm:improvements_in_Wong}
Let $\pi_{1}$ and $\pi_{2}$ be cuspidal automorphic representations for $\GL_{2}(\mathbb{A}_{F})$ with unitary central characters. Assume that $\pi_{1}$ and $\pi_{2}$ are not twist-equivalent. Then
\begin{align*}
    \underline{\delta}(S_{\ast}(\pi_{1}, \pi_{2})) \geq \frac{1}{8} .
\end{align*}
If we further assume that $\pi_{1}$ is non-solvable polyhedral, then
\begin{align*}
    \underline{\delta}(S_{\ast}(\pi_{1}, \pi_{2})) \geq 
    \begin{cases}
        \frac{1}{6.361} & \text{if } \pi_{2} \text{ is tetrahedral,} \\
        \frac{1}{5.079} & \text{if } \pi_{2} \text{ is octahedral,} \\
        \frac{1}{6.965} & \text{if } \pi_{2} \text{ is non-solvable polyhedral.}
    \end{cases}
\end{align*}
\end{thm}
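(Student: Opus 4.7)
The plan is to deduce Theorem \ref{thm:improvements_in_Wong} from Theorem \ref{thm:main_theorem} (and its proof) by splitting the set $S_\ast$ into two disjoint pieces tracking the sign of the strict inequality. Since for each unramified place $v$ exactly one of $\lvert a_v(\pi_1)\rvert > \lvert a_v(\pi_2)\rvert$, $\lvert a_v(\pi_1)\rvert < \lvert a_v(\pi_2)\rvert$, or $\lvert a_v(\pi_1)\rvert = \lvert a_v(\pi_2)\rvert$ holds, we have the disjoint decomposition
\[
S_\ast(\pi_1,\pi_2) = S_\ast^{>}(\pi_1,\pi_2) \sqcup S_\ast^{>}(\pi_2,\pi_1).
\]
The lower Dirichlet density is superadditive on disjoint sets (the numerator in the defining ratio splits additively, and $\liminf$ is superadditive on sums of nonnegative functions), so
\[
\underline{\delta}(S_\ast(\pi_1,\pi_2)) \;\geq\; \underline{\delta}(S_\ast^{>}(\pi_1,\pi_2)) + \underline{\delta}(S_\ast^{>}(\pi_2,\pi_1)).
\]

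The first bound is then immediate: the hypothesis of not being twist-equivalent is symmetric in $(\pi_1,\pi_2)$, so Theorem \ref{thm:main_theorem} applies to both orderings and yields $1/16 + 1/16 = 1/8$. The analogous argument with the refined bound $1/13.929$ applied to both orderings handles the case when both $\pi_1$ and $\pi_2$ are non-solvable polyhedral, giving $2/13.929 \approx 1/6.965$, which matches the third row of the displayed estimate.

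The main obstacle is the tetrahedral and octahedral sub-cases, where simple doubling of the uniform bound is not enough. For these I would unwind the proof of Theorem \ref{thm:main_theorem}, which proceeds by a case analysis on the polyhedral types of $\pi_1$ and $\pi_2$ and uses the $L$-function machinery (Rankin--Selberg, symmetric squares and cubes, and the Kim--Shahidi/Kim lifts to $\GL_3$, $\GL_4$, $\GL_5$ available in the solvable polyhedral case) to extract sharper inequalities among Dirichlet coefficients. The key observation is that the bound $\underline{\delta}(S_\ast^{>}(\pi_i,\pi_j)) \geq 1/13.929$ is only the worst case of an asymmetric family of bounds depending on the ordered pair of types $(\pi_i,\pi_j)$: when one of the two representations is tetrahedral or octahedral, the extra automorphic lifts force stronger lower bounds in at least one of the two orderings.

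The concluding step is bookkeeping: for each of the pairs (non-solvable polyhedral, tetrahedral) and (non-solvable polyhedral, octahedral), I would record the two type-dependent lower bounds for $\underline{\delta}(S_\ast^{>}(\pi_1,\pi_2))$ and $\underline{\delta}(S_\ast^{>}(\pi_2,\pi_1))$ coming out of the proof of Theorem \ref{thm:main_theorem}, sum them, and verify that their reciprocals match $6.361$ and $5.079$. The delicate part is ensuring that the worst non-solvable polyhedral sub-case controlling the $1/13.929$ bound is in fact compatible with the better bound in the \emph{opposite} ordering when $\pi_2$ is assumed tetrahedral or octahedral, so that the sum does indeed improve in the way stated.
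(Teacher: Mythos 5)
Your proposal is correct and follows essentially the same route as the paper: the crux in both is the disjoint decomposition $S_\ast = S_\ast^{>}(\pi_1,\pi_2) \sqcup S_\ast^{>}(\pi_2,\pi_1)$ together with superadditivity of $\underline{\delta}$, after which one sums the type-dependent bounds on $\underline{\delta}(S_\ast^{>})$ supplied by Theorem \ref{thm:main_theorem_both_non_dihedral} (for instance $1/(2+\sqrt{2})^2$ in one ordering and $1/14$ or $1/9$ in the other, giving $(11-7\sqrt{2})/7$ and $(29-18\sqrt{2})/18$). The paper states this superadditivity step explicitly in the proof of Theorem \ref{thm:improvements_in_Wong_both_non_dihedral} and deploys it for exactly the non-solvable polyhedral cases you flag; your ``bookkeeping'' is nothing more than reading off those entries, and the ``compatibility'' concern in your last paragraph is a non-issue since the bounds in Theorem \ref{thm:main_theorem_both_non_dihedral} are already indexed by the ordered pair of types with no further sub-case interference.
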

\begin{remark}
\begin{enumerate}[(a)]
    \item The bounds above for non-solvable polyhedral $\pi_{1}$ are expressed using decimals to allow for quick comparison. More precisely, we obtain bounds of $\frac{11-7\sqrt{2}}{7}$ when $\pi_{2}$ is tetrahedral, $\frac{29-18\sqrt{2}}{18}$ when $\pi_{2}$ is octahedral, and $14-8\sqrt{3}$ when $\pi_{2}$ is non-solvable polyhedral. 

    \item The bound of $\frac{1}{8}$ is sharp, as shown by the same pair of tetrahedral representations mentioned in the remark below Theorem \ref{thm:main_theorem}. 
    On the other hand, the bound for non-solvable polyhedral $\pi_{1}$ is not expected to be sharp. For example, if we assume that the symmetric sixth power lift of the non-solvable polyhedral representations $\pi_{i}$ is automorphic, then we would have the bounds
    \begin{align*}
        \underline{\delta}(S_{\ast}(\pi_{1}, \pi_{2})) \geq 
        \begin{cases}
            \frac{1}{4} & \text{if } \pi_{2} \text{ is tetrahedral,} \\
            \frac{4}{13} & \text{if } \pi_{2} \text{ is octahedral,} \\
            \frac{2}{9} & \text{if } \pi_{2} \text{ is non-solvable polyhedral.}
        \end{cases}
    \end{align*}
\end{enumerate}
\end{remark}
A more detailed version of these theorems can be found in later sections.
Theorem \ref{thm:improvements_in_Wong} improves the bounds established by Wong \cite[Theorem 5.2]{Wong_Refinements_Strong_Multiplcity_One_2022}. Under the same assumptions, Wong's result states that if $\pi_{1}$ is non-solvable polyhedral, then
\begin{align*}
    \underline{\delta}(S_{\ast}(\pi_{1}, \pi_{2})) \geq 
    \begin{cases}
        \frac{1}{10.17} & \text{if } \pi_{2} \text{ is tetrahedral,} \\
        \frac{1}{10.76} & \text{if } \pi_{2} \text{ is octahedral,} \\
        \frac{1}{9.9} & \text{if } \pi_{2} \text{ is non-solvable polyhedral.}
    \end{cases}
\end{align*}

Our method is a modification of the work of Walji \cite{Walji_Strong_Multiplicity_One_GL2_2014} and the work of Wong \cite{Wong_Refinements_Strong_Multiplcity_One_2022}. Let 
\begin{align*}
    S_{Ad} = S_{Ad}(\pi_{1}, \pi_{2}) = \{ v \mid v \text{ unramified for both } \pi_{1} \text{ and } \pi_{2}, \, a_{v}(\Ad(\pi_{1})) \neq a_{v}(\Ad(\pi_{2})) \}
\end{align*}
and 
\begin{align*}
    S_{Ad}^{>} = S_{Ad}^{>}(\pi_{1}, \pi_{2}) = \{ v \mid v \text{ unramified for both } \pi_{1} \text{ and } \pi_{2}, \, a_{v}(\Ad(\pi_{1})) > a_{v}(\Ad(\pi_{2})) \} .
\end{align*}
Since $a_{v}(\Ad(\pi_{i})) = \abs{a_{v}(\pi_{i})}^{2} -1$, we infer that $S_{Ad}(\pi_{1}, \pi_{2}) = S_{\ast}(\pi_{1}, \pi_{2})$ and $S_{Ad}^{>}(\pi_{1}, \pi_{2}) = S_{\ast}^{>}(\pi_{1}, \pi_{2})$.
To bound their densities, we study the asymptotic behavior of various products of $L$-functions associated to $\Ad(\pi_{1})$ and $\Ad(\pi_{2})$ as $s \to 1^{+}$. Our arguments rely on the automorphy of $\Sym^{2} \pi_{i}$ and $\Sym^{4} \pi_{i}$, and the functoriality of $\GL_{3} \times \GL_{2} \to \GL_{6}$, which will be discussed in Section \ref{sec:background}.

The article is structured as follows. In Section \ref{sec:background}, we provide a brief background on automorphic $L$-functions and the Artin conjecture. We also recall cuspidality criteria for such $L$-functions. 
In Section \ref{sec:both_nondihedral}, we prove Theorem \ref{thm:main_theorem} and Theorem \ref{thm:improvements_in_Wong} in the case where neither automorphic representation is dihedral. 
In Sections \ref{sec:both_dihedral} and \ref{sec:exactly_one_dihedral}, we establish these theorems in the cases where at least one of the automorphic representations is dihedral.

\section*{Acknowledgements}
The author was supported in part by an NSERC Discovery Grant.


\section{Notation and background}\label{sec:background}
We fix our notation for the remaining sections.
Let $F$ be a number field and $\mathbb{A}_{F}$ its ad\'ele ring.
Let $\mathcal{A}(\GL_{n}(\mathbb{A}_{F}))$ denote the set of automorphic representations for $\GL_{n}(\mathbb{A}_{F})$ with unitary central characters, and let $\mathcal{A}_{0}(\GL_{n}(\mathbb{A}_{F}))$ be the subset consisting of all cuspidal automorphic representations.

Let $F$ be a number field, and let $S$ be a set of finite places of $F$. We define the lower Dirichlet density and upper Dirichlet density of $S$ by
\begin{align*}
    \underline{\delta}(S) &= \liminf_{s \to 1^{+}} \frac{\sum_{v \in S} Nv^{-s}}{-\log(s-1)}
\end{align*}
and
\begin{align*}
    \overline{\delta}(S) &= \limsup_{s \to 1^{+}} \frac{\sum_{v \in S} Nv^{-s}}{-\log(s-1)}
\end{align*}
respectively, where $Nv$ denotes the norm of $v$. When the lower Dirichlet density equals the upper Dirichlet density, the Dirichlet density $\delta(S)$ exists and satisfies $\delta(S) = \underline{\delta}(S) = \overline{\delta}(S)$.


\subsection{Automorphic \texorpdfstring{$L$}{L}-functions}
For every $\pi \in \mathcal{A}_{0}(\GL_{n}(\mathbb{A}_{F}))$, there is an $L$-function associated to $\pi$, given by 
\begin{align*}
    L(s,\pi) = \prod_{v} L_{v}(s,\pi)
\end{align*}
where at unramified finite places $v$, we have
\begin{align*}
    L_{v}(s,\pi) &= \det \left( I_{n} - A_{v}(\pi) Nv^{-s} \right)^{-1} \\
    &= \prod_{j=1}^{n} (1-\alpha_{v,j}Nv^{-s})^{-1} .
\end{align*}
Here, $A_{v}(\pi)$ denotes the Langlands conjugacy class of $\pi$ at $v$, and the $\alpha_{v,j}$'s denote the Satake parameters for $\pi$ at $v$.
The $L$-function $L(s,\pi)$ converges absolutely at $\Re(s) > 1$. Jacquet and Shalika \cite{Jacquet_Shalika_non_vanishing_zeta_1976} showed that it is non-vanishing on $\Re(s) = 1$ with a possible simple pole at $s=1$ that occurs if and only if $\pi$ is equivalent to the trivial Hecke character $1$.
It is also conjectured that $\abs{\alpha_{v,j}} = 1$ for all $1 \leq j \leq n$, a statement known as the Ramanujan Conjecture.

We define the incomplete $L$-function associated to $\pi$ by
\begin{align*}
    L^{T}(s,\pi) = \prod_{v \not\in T} L_{v}(s,\pi) ,
\end{align*}
where $T$ is the set of all ramified and infinite places.
In particular, we define the incomplete Dedekind zeta function as
\begin{align*}
    \zeta_{F}^{T}(s) = \prod_{v \not\in T} (1-Nv^{-s})^{-1} .
\end{align*}

\subsection{Rankin-Selberg \texorpdfstring{$L$}{L}-functions}
Let $\pi \in \mathcal{A}_{0}(\GL_{n}(\mathbb{A}_{F}))$ and $\pi^{\prime}\in \mathcal{A}_{0}(\GL_{m}(\mathbb{A}_{F}))$. We define the Rankin-Selberg $L$-function associated to $\pi$ and $\pi^{\prime}$ by 
\begin{align*}
    L(s,\pi \times \pi^{\prime}) = \prod_{v} L_{v}(s,\pi \times \pi^{\prime})
\end{align*}
where at the finite places $v$ for which both $\pi$ and $\pi^{\prime}$ are unramified, we have
\begin{align*}
    L_{v}(s,\pi) &= \det \left( I_{nm} - (A_{v}(\pi) \otimes A_{v}(\pi^{\prime})) Nv^{-s} \right)^{-1} \\
    &= \prod_{j=1}^{n} \prod_{k=1}^{m} (1-\alpha_{v,j} \alpha_{v,k}^{\prime} Nv^{-s})^{-1}
\end{align*}
where $\alpha_{v,k}^{\prime}$'s denote Satake parameters for $\pi^{\prime}$ at $v$.
The Rankin-Selberg $L$-function $L(s,\pi \times \pi^{\prime})$ converges absolutely for $\Re(s) > 1$. Jacquet and Shalika \cite{Jacquet_Shalika_on_Euler_product_II_1981} showed that it can be extended holomorphically to $\Re(s) \geq 1$, except for a possible simple pole at $s=1$ which occurs if and only if $\pi^{\prime} \simeq \widetilde{\pi}$, where $\widetilde{\pi}$ is the dual of $\pi$. Furthermore, Shahidi \cite{Shahidi_L-functions_1981} proved that it is non-vanishing for $\Re(s) \geq 1$.

Similarly, we define the incomplete Rankin-Selberg $L$-function by
\begin{align*}
    L^{T}(s,\pi \times \pi^{\prime}) = \prod_{v \not\in T} L_{v}(s,\pi \times \pi^{\prime})
\end{align*}
where $T$ is the set of all ramified and infinite places.
We say that $L(s,\pi \times \pi^{\prime})$ is automorphic if there exists an automorphic representation $\Pi \in \mathcal{A}(\GL_{N}(\mathbb{A}_{F}))$ such that 
\begin{align*}
    L(s,\pi \times \pi^{\prime}) = L(s, \Pi) .
\end{align*}
In this case, we write $\Pi \simeq \pi \boxtimes \pi^{\prime}$.
Langlands' functoriality conjectures predict that the Rankin-Selberg $L$-functions are automorphic. This has been proved for $\GL(2) \times \GL(2)$ by Ramakrishnan \cite{Ramakrishnan_modularity_of_Rankin_Selberg_2000} and for $\GL(2) \times \GL(3)$ by Kim and Shahidi \cite{Kim_Shahidi_functoriality_sym3_2002}.

\subsection{Symmetric powers}
Let $\pi \in \mathcal{A}(\GL_{n}(\mathbb{A}_{F}))$. Consider the $k$-th symmetric power representation $\Sym^{k}: \GL_{n}(\CC) \to \GL_{m}(\CC)$, where $m = \binom{n+k-1}{k}$. We define the $k$-th symmetric power $L$-function associated to $\pi$ by
\begin{align*}
    L(s,\pi,\Sym^{k}) = \prod_{v} L_{v}(s, \pi, \Sym^{k})
\end{align*}
where at unramified finite places $v$, we have
\begin{align*}
    L_{v}(s,\pi, \Sym^{k}) &= \det \left( I_{m} - \Sym^{k}(A_{v}(\pi)) Nv^{-s} \right)^{-1} \\
    &= \prod_{1\leq j_{1} \leq \cdots \leq j_{k} \leq n} (1-\alpha_{v,j_{1}} \cdots \alpha_{v,j_{k}} Nv^{-s})^{-1} .
\end{align*}
The $L$-function $L(s,\pi,\Sym^{k})$ converges for $\Re(s) \gg 1$. If there exists $\Pi \in \mathcal{A}(\GL_{m}(\mathbb{A}_{F}))$ such that
\begin{align*}
    L(s,\pi,\Sym^{k}) = L(s, \Pi) ,
\end{align*}
then we say that $L(s,\pi,\Sym^{k})$ is automorphic and denote $\Pi$ by $\Sym^{k} \pi$.

For $\pi \in \mathcal{A}_{0}(\GL_{2}(\mathbb{A}_{F}))$, the automorphy of $\Sym^{k} \pi$ has been established in several cases: $k=2$ by Gelbart and Jacquet \cite{Gelbart_Jacquet_GL2_GL3_1978}; $k=3$ by Kim and Shahidi \cite{Kim_Shahidi_functoriality_sym3_2002}; and $k=4$ by Kim \cite{Kim_functoriality_sym4_2003}.

\subsection{Isobaric representations and adjoint lift}
Suppose $\pi_{i} \in \mathcal{A}(\GL_{n_{i}}(\mathbb{A}_{F}))$ for $1 \leq i \leq k$. There exists $\Pi \in \mathcal{A}(\GL_{N}(\mathbb{A}_{F}))$, where $N = \sum_{i=1}^{k} n_{i}$, such that
\begin{align*}
    L(s,\Pi) = \prod_{i=1}^{k} L(s, \pi_{i}) .
\end{align*}
We write $\Pi \simeq \pi_{1} \boxplus \cdots \boxplus \pi_{k}$. 
Now, if $\pi \in \mathcal{A}_{0}(\GL_{2}(\mathbb{A}_{F}))$, Gelbart and Jacquet \cite{Gelbart_Jacquet_GL2_GL3_1978} showed that there exists $\Pi \in \mathcal{A}(\GL_{3}(\mathbb{A}_{F}))$ such that
\begin{align*}
    \pi \boxtimes \widetilde{\pi} \simeq 1 \boxplus \Pi .
\end{align*}
We call $\Pi$ the adjoint lift of $\pi$ and denote it by $\Ad(\pi)$. 
It is worth noting that $\Ad(\pi) \simeq \Sym^{2} \pi \otimes \omega_{\pi}^{-1}$, where $\omega_{\pi}$ is the central character of $\pi$.
By the work of Gelbart and Jacquet \cite{Gelbart_Jacquet_GL2_GL3_1978}, we know that $\Ad(\pi)$ is cuspidal if and only if $\pi$ is non-dihedral.
Furthermore, Ramakrishnan \cite{Ramakrishnan_modularity_of_Rankin_Selberg_2000} provided a method to determine twist-equivalence by studying adjoint lifts: Given $\pi, \pi^{\prime} \in \mathcal{A}_{0}(\GL_{2}(\mathbb{A}_{F}))$, they are twist-equivalent if and only if $\Ad(\pi) \simeq \Ad(\pi^{\prime})$.

\subsection{Solvable polyhedral representation}\label{subsec:solvable_polyhedral_repn}
Let $\rho: W_{F} \to \GL_{2}(\CC)$ be a two-dimensional irreducible representation of the Weil group $W_{F}$. The representation is classified according to its image subgroup $\pr(\rho(W_{F}))$, where $\pr: \GL_{2}(\CC) \to \PGL_{2}(\CC)$ is the natural projection. Specifically, $\rho$ is classified as dihedral when $\pr(\rho(W_{F})) \cong D_{n}$, tetrahedral when $\pr(\rho(W_{F})) \cong A_{4}$, octahedral when $\pr(\rho(W_{F})) \cong S_{4}$, and icosahedral when $\pr(\rho(W_{F})) \cong A_{5}$ (see \cite[Section 4.3]{Gelbart_three_lectures_1997}).
We say that $\rho$ is of solvable polyhedral type if it is dihedral, tetrahedral, or octahedral.

The strong Artin conjecture states that $\rho$ corresponds to a cuspidal automorphic representation $\pi \in \mathcal{A}_{0}(\GL_{2}(\mathbb{A}_{F}))$ such that their $L$-functions are equal
\begin{align*}
    L(s,\rho) = L(s,\pi) .
\end{align*}
The conjecture is known to hold when $\rho$ is dihedral (by Hecke and Maa\ss), tetrahedral (by Langlands \cite{Langlands_base_change_1980}) and octahedral (by Langlands and Tunnel \cite{Langlands_base_change_1980}, \cite{Tunnell_Artin_conjecture_1981}).
Furthermore, when $F = \QQ$ and $\rho$ is odd, the strong Artin conjecture follows from Serre's modularity conjecture, which was proved by Khare and Wintenberger \cite{Khare_Wintenberger_serre_modularity_conj_I_2009, Khare_Wintenberger_serre_modularity_conj_II_2009}.

Conversely, we say that $\pi \in \mathcal{A}_{0}(\GL_{2}(\mathbb{A}_{F}))$ is solvable polyhedral if there exists a two-dimensional irreducible representation $\rho: W_{F} \to \GL_{2}(\CC)$, where $\rho$ is of solvable polyhedral type, such that their $L$-functions are equal
\begin{align*}
    L(s,\rho) = L(s,\pi) .
\end{align*}
In this case, we classify the solvable polyhedral representation $\pi$ according to the corresponding two-dimensional irreducible representation $\rho$, following the same classification scheme. 

Cuspidal automorphic representations of solvable polyhedral type can also be classified in terms of their symmetric powers (see \cite{Kim_Shahidi_cuspidality_sym_2002}). 
A cuspidal automorphic representation for $\GL_{2}(\mathbb{A}_{F})$ is called dihedral if it admits a non-trivial self-twist by a (quadratic) character; tetrahedral if it is non-dihedral and its symmetric square admits a non-trivial self-twist by a (cubic) character; and octahedral if it is non-dihedral and non-tetrahedral and its symmetric cube admits a non-trivial self-twist by a quadratic character.

It is worth noting that the Ramanujan conjecture holds for all solvable polyhedral representations $\pi$.

\subsection{Cuspidality of symmetric powers}\label{subsec:Cuspidality_of_symmetric_power}
The notation established in this section will be used throughout the remainder of this article.
Let $\pi \in \mathcal{A}_{0}(\GL_{2}(\mathbb{A}_{F}))$ with central character $\omega$. A natural question is whether the $k$-th symmetric power $\Sym^{k} \pi$, for $k=2,3$ or $4$, remains cuspidal. 
We now state the cuspidality criterion for the fourth symmetric power $\Sym^{4} \pi$ by Kim and Shahidi \cite{Kim_Shahidi_cuspidality_sym_2002}.
\begin{thm}\label{thm:Kim_Shahidi_cuspidality_criterion}
Let $\pi \in \mathcal{A}_{0}(\GL_{2}(\mathbb{A}_{F}))$ be non-dihedral.
\begin{enumerate}[(i)]
    \item If $\pi$ is tetrahedral, then
    \begin{align*}
        \Sym^{4} \pi \otimes \omega^{-2} \simeq \mu \boxplus \mu^{2} \boxplus \Ad(\pi) ,
    \end{align*}
    where $\mu$ is a non-trivial cubic character satisfying $\Ad(\pi) \otimes \mu \simeq \Ad(\pi)$.

    \item If $\pi$ is octahedral, then
    \begin{align*}
        \Sym^{4} \pi \otimes \omega^{-2} \simeq \sigma \boxplus \Ad(\pi) \otimes \eta 
    \end{align*}
    for some non-trivial quadratic character $\eta$ and cuspidal dihedral representation $\sigma$.

    \item If $\pi$ is non-solvable polyhedral, then $\Sym^{4} \pi \otimes \omega^{-2}$ is cuspidal.
\end{enumerate}
\end{thm}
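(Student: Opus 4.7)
The plan is to exploit the isobaric decomposition
\begin{equation*}
\Ad(\pi) \boxtimes \Ad(\pi) \simeq 1 \boxplus \Ad(\pi) \boxplus (\Sym^{4}\pi \otimes \omega^{-2}),
\end{equation*}
which I obtain by twisting the $\GL_{2}$ Clebsch--Gordan relation $\Sym^{2}\pi \boxtimes \Sym^{2}\pi \simeq \Sym^{4}\pi \boxplus (\Sym^{2}\pi \otimes \omega) \boxplus \omega^{2}$ by $\omega^{-2}$ and invoking the automorphy of $\Sym^{k}\pi$ for $k \leq 4$ (Gelbart--Jacquet, Kim--Shahidi, Kim). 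Non-dihedrality of $\pi$ makes $\Ad(\pi)$ cuspidal on $\GL_{3}$, so throughout I can freely use the nonvanishing and pole analysis of Rankin--Selberg $L$-functions (Jacquet--Shalika, Shahidi) together with uniqueness of isobaric decomposition (strong multiplicity one).

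For (i), I would let $\mu$ be the non-trivial cubic character with $\Ad(\pi) \otimes \mu \simeq \Ad(\pi)$ and tensor the displayed identity by $\mu$. Comparing with the untwisted version yields
\begin{equation*}
1 \boxplus (\Sym^{4}\pi \otimes \omega^{-2}) \simeq \mu \boxplus (\Sym^{4}\pi \otimes \omega^{-2}) \otimes \mu,
\end{equation*}
and strong multiplicity one forces both $\mu$ and $\mu^{2}$ to appear as summands of $\Sym^{4}\pi \otimes \omega^{-2}$. For the remaining three-dimensional piece $\tau$, I would compare Satake parameters at good places (using $\alpha_{v}/\beta_{v} = \mu(v)$ wherever $\mu$ is non-trivial, and the Weil-group structure of a tetrahedral representation at the places where $\mu(v) = 1$) to identify $\tau \simeq \Ad(\pi)$.

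For (ii), the octahedral hypothesis provides a non-trivial quadratic character $\eta$ with $\Sym^{3}\pi \otimes \eta \simeq \Sym^{3}\pi$. I would run the analogous twist-and-match argument using the main identity together with the additional Clebsch--Gordan relation $\pi \boxtimes \Sym^{3}\pi \simeq \Sym^{4}\pi \boxplus (\Sym^{2}\pi \otimes \omega)$ to extract $\Ad(\pi) \otimes \eta$ as a three-dimensional summand of $\Sym^{4}\pi \otimes \omega^{-2}$. The two-dimensional complement $\sigma$ inherits $\sigma \otimes \eta \simeq \sigma$; non-tetrahedrality of $\pi$ prevents further splitting into characters, so $\sigma$ is cuspidal, and the nontrivial quadratic self-twist forces $\sigma$ to be dihedral.

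For (iii), I would suppose for contradiction that $\Sym^{4}\pi \otimes \omega^{-2}$ is non-cuspidal and enumerate the possible isobaric partitions of $5$. In each case, pole analysis of a suitable Rankin--Selberg $L$-function built from the main identity and its twists should produce a non-trivial character $\chi$ satisfying one of $\pi \otimes \chi \simeq \pi$, $\Ad(\pi) \otimes \chi \simeq \Ad(\pi)$, or $\Sym^{3}\pi \otimes \chi \simeq \Sym^{3}\pi$, placing $\pi$ in the dihedral, tetrahedral, or octahedral class respectively and contradicting the hypothesis. The main obstacle is precisely this partition-by-partition bookkeeping: the most delicate case is $5 = 2 + 3$, where I must rule out the possibility that the three-dimensional summand is a generic cuspidal $\GL_{3}$ representation unrelated to $\Ad(\pi)$, which should follow by exploiting the monomial structure of the Satake parameters of $\Sym^{4}\pi$ to produce the required self-twist character on $\pi$ or one of its symmetric powers.
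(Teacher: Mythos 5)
The paper does not prove this theorem; it is stated as a result of Kim and Shahidi \cite{Kim_Shahidi_cuspidality_sym_2002} (see the sentence immediately preceding it in the text), so there is no in-paper argument to compare against. Evaluating your proposal on its own terms: the organizing identity $\Ad(\pi)\boxtimes\Ad(\pi)\simeq 1\boxplus\Ad(\pi)\boxplus(\Sym^{4}\pi\otimes\omega^{-2})$ is correct and the right starting point, and your twist-and-cancel arguments in (i) and (ii) do produce the predicted one-dimensional constituents. Two details are, however, glossed over. In (i), pinning down the residual $3$-dimensional piece $\tau$ as $\Ad(\pi)$ by ``comparing Satake parameters'' is not an argument; the clean route you hint at is to invoke Langlands' theorem (strong Artin conjecture in the tetrahedral case) and pass to the $A_{4}$ Galois picture, and you should say so explicitly since it is a real input, and one Kim--Shahidi deliberately avoid in favor of a purely automorphic pole analysis. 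In (ii), cuspidality of $\sigma$ does not follow from ``non-tetrahedrality prevents splitting into characters'' as stated: a character constituent $\chi$ of $\Sym^{4}\pi\otimes\omega^{-2}$ forces $\Ad(\pi)\otimes\chi\simeq\Ad(\pi)$, hence $\chi^{3}=1$ by taking determinants, so $\chi=1$ by non-tetrahedrality, and $\chi=1$ then forces a double pole in $L(s,\Ad(\pi)\times\Ad(\pi))$, contradicting cuspidality of $\Ad(\pi)$. The conclusion is right; the stated reason is not.

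The genuine gap is (iii), and you flag it yourself. ``Pole analysis of a suitable Rankin--Selberg $L$-function should produce a non-trivial self-twist character'' is essentially the statement of the theorem, not a proof of it, and nothing in the sketch explains how, in the $5=2+3$ case, to rule out a $3$-dimensional cuspidal summand unrelated to $\Ad(\pi)$. This is precisely the heart of Kim--Shahidi's argument: one has to control the pole order of $L(s,\Sym^{4}\pi\times\widetilde{\Sym^{4}\pi})$ in terms of the lower $L(s,\Sym^{m}\pi\times\widetilde{\Sym^{m}\pi})$ for $m\le 3$ and exploit the absence of self-twists at each lower level. Gesturing at ``the monomial structure of the Satake parameters of $\Sym^{4}\pi$'' does not close this, and as written (iii) is a plan rather than a proof.
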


It remains to discuss the dihedral case. Let $\pi$ be dihedral. It is known that $\pi$ can be induced from some Hecke character $\psi$ of $K$, where $K$ is a quadratic extension of $F$. In such case, we write $\pi = I_{K}^{F}(\psi)$. Building on Walji's introduction of property P \cite{Walji_Strong_Multiplicity_One_GL2_2014}, we introduce properties Q and R to further classify dihedral representations that do not satisfy property P.
\begin{definition}
Let $\pi = I_{K}^{F}(\psi)$ be a dihedral representation and denote $\nu := \psi/\psi^{\tau}$, where $\tau$ is the non-trivial element of $\Gal(K/F)$.
We say that $\pi$ satisfies \textbf{property P} if $\nu$ is invariant under $\tau$.
We say that $\pi$ satisfies \textbf{property Q} if both $\pi$ and $I_{K}^{F}(\nu)$ do not satisfy property P, and $L^{T}(s, I_{K}^{F}(\nu^{3}))$ has a simple pole at $s=1$.
We say that $\pi$ satisfies \textbf{property R} if both $\pi$ and $I_{K}^{F}(\nu)$ do not satisfy property P, and $L^{T}(s, I_{K}^{F}(\nu^{3}))$ is holomorphic at $s=1$.
\end{definition}
We can now express the decomposition $\Ad(\pi)$ depending on whether it satisfies property P.
\begin{lem}\label{lem:adjoint_lift_decomposition_for_dihedral}
Let $\pi \in \mathcal{A}_{0}(\GL_{2}(\mathbb{A}_{F}))$ be dihedral. Let $\pi = I_{K}^{F}(\psi)$ with $\chi$ being the (quadratic) Hecke character associated to $K/F$ and $\tau$ the non-trivial element in $\Gal(K/F)$.
\begin{enumerate}[(i)]
    \item If $\pi$ satisfies property $P$, then
    \begin{align*}
        \Ad(\pi) \simeq \chi \boxplus \psi/\psi^{\tau} \boxplus (\psi/\psi^{\tau})\chi .
    \end{align*}

    \item If $\pi$ does not satisfy property $P$, then
    \begin{align*}
        \Ad(\pi) \simeq \chi \boxplus I_{K}^{F}(\psi/\psi^{\tau}) .
    \end{align*}
    \end{enumerate}
\end{lem}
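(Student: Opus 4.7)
The plan is to compute $\pi \boxtimes \widetilde{\pi}$ directly and then extract $\Ad(\pi)$ from the Gelbart--Jacquet identity $\pi \boxtimes \widetilde{\pi} \simeq 1 \boxplus \Ad(\pi)$ recalled above. The key input is the tensor product formula for a pair of automorphic inductions from the same quadratic extension $K/F$:
\begin{align*}
I_{K}^{F}(\alpha) \boxtimes I_{K}^{F}(\beta) \simeq I_{K}^{F}(\alpha \beta) \boxplus I_{K}^{F}(\alpha \beta^{\tau}),
\end{align*}
valid for Hecke characters $\alpha, \beta$ of $K$. On the Weil-group side this is the projection formula combined with the Mackey identity $\operatorname{Res}_{W_{K}}^{W_{F}} \operatorname{Ind}_{W_{K}}^{W_{F}}(\beta) \simeq \beta \oplus \beta^{\tau}$; automorphically it may be verified by matching unramified Euler factors.

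First I would apply this with $\alpha = \psi$ and $\beta = \psi^{-1}$, using $\widetilde{\pi} \simeq I_{K}^{F}(\psi^{-1})$, to obtain
\begin{align*}
\pi \boxtimes \widetilde{\pi} \simeq I_{K}^{F}(1_{K}) \boxplus I_{K}^{F}(\psi/\psi^{\tau}).
\end{align*}
Since $\operatorname{Ind}_{W_{K}}^{W_{F}}(1)$ is the regular representation of $\operatorname{Gal}(K/F)$, class field theory gives $I_{K}^{F}(1_{K}) \simeq 1 \boxplus \chi$. Comparing with $1 \boxplus \Ad(\pi)$ then yields the master formula
\begin{align*}
\Ad(\pi) \simeq \chi \boxplus I_{K}^{F}(\psi/\psi^{\tau}).
\end{align*}

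For case (ii), $\pi$ failing property P says precisely that $\nu := \psi/\psi^{\tau}$ is not $\tau$-invariant, so $I_{K}^{F}(\nu)$ is cuspidal and the master formula is already the claimed decomposition. For case (i), property P gives $\nu = \nu^{\tau}$; since $\nu^{\tau} = \psi^{\tau}/\psi = \nu^{-1}$, this forces $\nu^{2} = 1_{K}$. A $\tau$-invariant Hecke character of $C_{K}$ descends via the norm map to a character $\mu$ of $C_{F}$, unique up to a twist by $\chi$, and then $I_{K}^{F}(\nu) \simeq \mu \boxplus \mu \chi$. Substituting into the master formula gives case (i), with the convention of the lemma that $\psi/\psi^{\tau}$ denotes a chosen descent to $F$. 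The proof is essentially formal; the only small subtlety is the reducibility criterion for an induction from $K$, which follows from standard Galois-invariance considerations, so there is no real obstacle.
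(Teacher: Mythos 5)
Your proof is correct. The paper states this lemma without giving its own proof, treating it as a standard fact; your derivation is the natural one and matches what a careful reader would reconstruct. The key ingredients are all sound: the projection/Mackey identity $I_{K}^{F}(\alpha) \boxtimes I_{K}^{F}(\beta) \simeq I_{K}^{F}(\alpha\beta) \boxplus I_{K}^{F}(\alpha\beta^{\tau})$ (verifiable on unramified Euler factors, or on the Weil-group side), together with $I_{K}^{F}(1_{K}) \simeq 1 \boxplus \chi$ and the Gelbart--Jacquet relation $\pi \boxtimes \widetilde{\pi} \simeq 1 \boxplus \Ad(\pi)$, immediately give $\Ad(\pi) \simeq \chi \boxplus I_{K}^{F}(\psi/\psi^{\tau})$. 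The case split is exactly the cuspidality criterion for automorphic induction from a quadratic extension: $I_{K}^{F}(\nu)$ is cuspidal iff $\nu \neq \nu^{\tau}$, and when $\nu = \nu^{\tau}$ it splits as $\mu \boxplus \mu\chi$ where $\mu$ is a Hecke character of $F$ with $\mu \circ N_{K/F} = \nu$, determined up to a twist by $\chi$. You are also right to flag the notational convention: in case (i) the paper writes $\psi/\psi^{\tau}$ for what must be such a chosen extension $\mu$ to $C_{F}$, since a constituent of an isobaric sum on $\GL_{3}(\mathbb{A}_{F})$ has to be a Hecke character of $F$, not of $K$. The observation that $\nu^{\tau} = \nu^{-1}$ always holds (so property P forces $\nu^{2} = 1_{K}$) is a correct side remark, though not strictly needed for the decomposition itself.
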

From Lemma \ref{lem:adjoint_lift_decomposition_for_dihedral} (ii) and the Clebsch-Gordan decomposition, if $\pi$ does not satisfy property P, then 
\begin{align*}
    \Pi \times \Pi \simeq 1 \boxplus 1 \boxplus I_{K}^{F}(\nu) \boxplus I_{K}^{F}(\nu) \boxplus \Ad(I_{K}^{F}(\nu)) ,
\end{align*}
where $\nu = \psi/\psi^{\tau}$. Note that $I_{K}^{F}(\nu)$ is dihedral, as it admits a non-trivial quadratic twist by $\chi$.
This motivates us to analyze $\Ad(I_{K}^{F}(\nu))$ based on whether $I_{K}^{F}(\nu)$ satisfies property P.
If not, we classify $\pi$ further based on property Q and property R.


\section{Both $\pi_{1}$ and $\pi_{2}$ are non-dihedral}\label{sec:both_nondihedral}
In this section, we prove Theorem \ref{thm:main_theorem} and Theorem \ref{thm:improvements_in_Wong} when both $\pi_{1}$ and $\pi_{2}$ are non-dihedral.

\begin{lem}\label{lem:order_of_L_function_both_non_dihedral}
Let $\pi_{1}, \pi_{2} \in \mathcal{A}_{0}(\GL_{2}(\mathbb{A}_{F}))$ be non-dihedral representations with unitary central characters $\omega_{1}, \omega_{2}$ respectively. Assume that $\pi_{1}$ and $\pi_{2}$ are not twist-equivalent.
Let $T$ be the set of all the infinite places as well as the finite places at which $\pi_{1}$ or $\pi_{2}$ is ramified. 
Let $\Pi_{1} = \Ad(\pi_{1})$ and $\Pi_{2} = \Ad(\pi_{2})$. Then
\begin{enumerate}[(i)]
    \item 
    \begin{align*}
        -\ord_{s=1}L^{T}(s,\Pi_{1} \times \Pi_{1} \times \Pi_{1}) = 
        \begin{cases}
            2 & \text{if } \pi_{1} \text{ is tetrahedral,} \\
            1 & \text{if } \pi_{1} \text{ is octahedral,} \\
            1 & \text{if } \pi_{1} \text{ is non-solvable polyhedral.}
        \end{cases}
    \end{align*}

    \item 
    \begin{align*}
        -\ord_{s=1}L^{T}(s,\Pi_{1} \times \Pi_{1} \times \Pi_{2}) = 0.
    \end{align*}

    \item \cite[equation (5.1)]{Wong_Refinements_Strong_Multiplcity_One_2022}
    \begin{align*}
        -\ord_{s=1}L^{T}(s,\Pi_{1} \times \Pi_{1} \times \Pi_{1} \times \Pi_{1}) = 
        \begin{cases}
            7 & \text{if } \pi_{1} \text{ is tetrahedral,} \\
            4 & \text{if } \pi_{1} \text{ is octahedral,} \\
            3 & \text{if } \pi_{1} \text{ is non-solvable polyhedral.}
        \end{cases}
    \end{align*}

    \item If $\pi_{1}$ is tetrahedral or octahedral, then
    \begin{align*}
        - \ord_{s=1} L^{T}(s, \Pi_{1} \times \Pi_{1} \times \Pi_{1} \times \Pi_{2}) = 0.
    \end{align*}

    \item 
    \begin{align*}
        &- \ord_{s=1} L^{T}(s, \Pi_{1} \times \Pi_{1} \times \Pi_{2} \times \Pi_{2}) \\
        = & 
        \begin{cases}
            1 \text{ or } 3 & \text{if } \pi_{1} \text{ is tetrahedral and } \pi_{2} \text{ is tetrahedral,} \\
            1 & \text{if } \pi_{1} \text{ is tetrahedral and } \pi_{2} \text{ is octahedral,} \\
            1 & \text{if } \pi_{1} \text{ is tetrahedral and } \pi_{2} \text{ is non-solvable polyhedral,} \\
            1 \text{ or } 2 & \text{if } \pi_{1} \text{ is octahedral and } \pi_{2} \text{ is octahedral,} \\
            1 & \text{if } \pi_{1} \text{ is octahedral and } \pi_{2} \text{ is non-solvable polyhedral,} \\
            1 \text{ or } 2 & \text{if } \pi_{1} \text{ is non-solvable polyhedral and } \pi_{2} \text{ is non-solvable polyhedral.}
        \end{cases}
    \end{align*}
\end{enumerate}
\end{lem}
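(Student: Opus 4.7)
The plan uses the following single principle throughout: for isobaric automorphic representations $A$ and $B$ with known cuspidal decompositions, the pole order $-\ord_{s=1} L^{T}(s, A \times B)$ equals the number of ordered pairs of cuspidal constituents $(A_{i}, B_{j})$ with $A_{i} \simeq B_{j}^{\vee}$, by the Jacquet--Shalika pole theorem recalled in Section~\ref{sec:background}. Part (iii) is cited directly from Wong; each of the remaining parts reduces to counting such matches in an appropriate isobaric expansion.

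The engine of every computation is the identity
\begin{equation*}
\Pi_{i} \times \Pi_{i} \simeq 1 \boxplus \Pi_{i} \boxplus (\Sym^{4} \pi_{i} \otimes \omega_{i}^{-2}),
\end{equation*}
derived from $\Pi_{i} \simeq \Sym^{2} \pi_{i} \otimes \omega_{i}^{-1}$ together with the Clebsch--Gordan decomposition of $\Sym^{2} V \otimes \Sym^{2} V$ for the standard representation of $\GL_{2}$. Theorem~\ref{thm:Kim_Shahidi_cuspidality_criterion} then expands the third summand into cuspidal pieces according to whether $\pi_{i}$ is tetrahedral, octahedral, or non-solvable polyhedral.

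For parts (i), (ii) and (iv), I would iterate this identity to write each relevant product as a concrete isobaric sum and count matching pairs. Three ingredients do most of the work: (a) non-twist-equivalence of $\pi_{1}, \pi_{2}$ gives $\Pi_{1} \not\simeq \Pi_{2}$ via the criterion recalled after Lemma~\ref{lem:adjoint_lift_decomposition_for_dihedral}; (b) cuspidal representations of $\GL_{m}$ and $\GL_{n}$ with $m \neq n$ cannot be dual to each other, so their Rankin--Selberg $L$-function is holomorphic at $s=1$; and (c) central-character bookkeeping combined with the self-twist identities of Theorem~\ref{thm:Kim_Shahidi_cuspidality_criterion} rules out unwanted identifications such as $\Pi_{1} \simeq \Pi_{2} \otimes \chi$. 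For the stray summand $\sigma \boxtimes \Pi_{1}$ that appears in the octahedral branch of (iv), I would invoke the Kim--Shahidi $\GL_{2} \times \GL_{3} \to \GL_{6}$ functoriality to realize it as an isobaric representation of $\GL_{6}$ and verify that none of its cuspidal constituents can be isomorphic to $\Pi_{2}^{\vee}$ (a $\GL_{3}$-representation).

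The main obstacle is part (v). Writing $S_{i} := \Pi_{i} \times \Pi_{i}$ and enumerating the pairwise products of cuspidal constituents appearing in $S_{1} \times S_{2}$, the summand $1 \times 1 = 1$ contributes a baseline $+1$ to the pole order. The dichotomies in the stated bounds correspond to possible coincidences between the Kim--Shahidi auxiliary data on the two sides: whether the cubic self-twist characters $\mu_{1}, \mu_{2}$ satisfy $\{\mu_{1}, \mu_{1}^{2}\} \cap \{\mu_{2}, \mu_{2}^{2}\} \neq \emptyset$ (making two additional products $\mu_{1}^{a} \mu_{2}^{b}$ trivial and raising the pole order to $3$ in the tetrahedral--tetrahedral case); whether the dihedral constituents $\sigma_{i}$ of $\Sym^{4} \pi_{i} \otimes \omega_{i}^{-2}$ satisfy $\sigma_{1} \simeq \sigma_{2}^{\vee}$ (octahedral--octahedral case); and whether $\Sym^{4} \pi_{1} \otimes \omega_{1}^{-2} \simeq \Sym^{4} \pi_{2} \otimes \omega_{2}^{-2}$ (non-solvable polyhedral case, noting that these $5$-dimensional representations are self-dual). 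The delicate part of (v) is verifying that no other cross-type pairing can produce an unexpected match, which again reduces to central-character constraints together with the excluded relation $\Pi_{1} \simeq \Pi_{2}$.
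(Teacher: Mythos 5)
Your proposal takes essentially the same route as the paper: the Clebsch--Gordan identity $\Pi_i \times \Pi_i \simeq 1 \boxplus \Pi_i \boxplus (\Sym^4 \pi_i \otimes \omega_i^{-2})$, Theorem~\ref{thm:Kim_Shahidi_cuspidality_criterion} to split the $\Sym^4$ summand by type, and Jacquet--Shalika pole counting over the resulting isobaric constituents. The paper's proof of Lemma~\ref{lem:order_of_L_function_both_non_dihedral} is likewise a sketch, working out exactly the same delicate cases you single out (the $\sigma_1 \boxtimes \Pi_1$ term in (iv) and the possible coincidences in (v)), and your enumeration of which coincidences can raise the pole order in (v) matches the paper's.

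One spot where your write-up could mislead a reader: for the $\sigma_1 \boxtimes \Pi_1$ term in the octahedral branch of (iv), the cuspidal constituents of that $\GL_6$-object are themselves $\GL_3$-cuspidal --- in fact twists of $\Pi_1$ by characters, via the Ramakrishnan--Wang $\GL_3 \times \GL_2$ cuspidality criterion --- so ingredient (b), the $m \neq n$ dimension count, has no purchase there; a priori those constituents \emph{could} equal $\Pi_2^{\vee}$. The genuine content is that $\Pi_1$ and $\Pi_2$ are not merely non-isomorphic but non-twist-equivalent, which the paper extracts from Ramakrishnan--Wang, Prop.~9.6. Your ingredient (c) gestures at the right thing, but ``central-character bookkeeping'' only forces $\chi^3 = 1$; to finish one also uses self-duality of the adjoint to turn $\Pi_1 \simeq \Pi_2 \otimes \chi$ into $\Pi_2 \simeq \Pi_2 \otimes \chi^2$, then the tetrahedral self-twist structure to conclude $\Pi_1 \simeq \Pi_2$, a contradiction. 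Either spell out that short argument or cite the Ramakrishnan--Wang proposition as the paper does; once that is done, your plan and the paper's proof coincide.
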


To prove the Lemma \ref{lem:order_of_L_function_both_non_dihedral}, we need to decompose the corresponding $L$-functions, as stated in the following lemma.

\begin{lem}\label{lem:L-function_decomposition_both_non_dihedral}
Let $\pi_{1}, \pi_{2} \in \mathcal{A}_{0}(\GL_{2}(\mathbb{A}_{F}))$ be non-dihedral representations with unitary central characters $\omega_{1}$ and $\omega_{2}$ respectively.
Let $T$ be the set of all the infinite places as well as the finite places at which $\pi_{1}$ or $\pi_{2}$ is ramified. 
Let $\Pi_{1} = \Ad(\pi_{1})$ and $\Pi_{2} = \Ad(\pi_{2})$.
For $i=1,2$, let $\mu_{i}$, $\sigma_{i}$ and $\eta_{i}$ be as defined in Section \ref{subsec:Cuspidality_of_symmetric_power} for $\pi_{i}$.
\begin{enumerate}[(i)]
    \item If $\pi_{1}$ is tetrahedral, we have
    \begin{align*}
        &L^{T}(s, \Pi_{1} \times \Pi_{1} \times \Pi_{1}) \\
        =& \zeta_{F}^{T}(s)^{2} L^{T}(s,\mu_{1})^{2} L^{T}(s,\mu_{1}^{2})^{2} L^{T}(s,\Ad(\pi_{1}))^{7} .
    \end{align*}
    If $\pi_{1}$ is octahedral, we have
    \begin{align*}
        &L^{T}(s, \Pi_{1} \times \Pi_{1} \times \Pi_{1}) \\
        =& \zeta_{F}^{T}(s)  L^{T}(s,\eta_{1}) L^{T}(s,\sigma_{1}) L^{T}(s,\sigma_{1} \otimes \eta_{1}) L^{T}(s,\Ad(\pi_{1}))^{3} L^{T}(s,\Ad(\pi_{1})\otimes \eta_{1})^{2} L^{T}(s, \Ad(\pi_{1}) \times \sigma_{1}) .
    \end{align*}
    If $\pi_{1}$ is not solvable polyhedral, we have
    \begin{align*}
        &L^{T}(s, \Pi_{1} \times \Pi_{1} \times \Pi_{1}) \\
        =& \zeta_{F}^{T}(s) L^{T}(s, \Ad(\pi_{1}))^{2} L^{T}(s, \Sym^{4}(\pi_{1}) \otimes \omega_{1}^{-2}) L^{T}(s, \Sym^{4}(\pi_{1}) \otimes \omega_{1}^{-2} \times \Ad(\pi_{1})) .
    \end{align*}

    \item If $\pi_{1}$ is tetrahedral, we have
    \begin{align*}
        &L^{T}(s, \Pi_{1} \times \Pi_{1} \times \Pi_{2}) \\
        =& L^{T}(s, \Ad(\pi_{2})) L^{T}(s, \Ad(\pi_{2}) \otimes \mu_{1}) L^{T}(s, \Ad(\pi_{2}) \otimes \mu_{1}^{2}) L^{T}(s, \Ad(\pi_{1}) \times \Ad(\pi_{2}))^{2} .
    \end{align*}
    If $\pi_{1}$ is octahedral, we have
    \begin{align*}
        &L^{T}(s, \Pi_{1} \times \Pi_{1} \times \Pi_{2}) \\
        =& L^{T}(s, \Ad(\pi_{2})) L^{T}(s, \Ad(\pi_{2}) \times \sigma_{1}) L^{T}(s, \Ad(\pi_{1}) \times \Ad(\pi_{2})) L^{T}(s, \Ad(\pi_{1})\otimes \eta_{1} \times \Ad(\pi_{2})) .
    \end{align*}
    If $\pi_{1}$ is not solvable polyhedral, we have
    \begin{align*}
        &L^{T}(s, \Pi_{1} \times \Pi_{1} \times \Pi_{2}) \\
        =& L^{T}(s, \Ad(\pi_{2})) L^{T}(s, \Ad(\pi_{1}) \times \Ad(\pi_{2})) L^{T}(s, \Sym^{4}(\pi_{1}) \otimes \omega_{1}^{-2} \times \Ad(\pi_{2}) ) .
    \end{align*}

    \item If $\pi_{1}$ is tetrahedral, we have
    \begin{align*}
        &L^{T}(s, \Pi_{1} \times \Pi_{1} \times \Pi_{1} \times \Pi_{1}) \\
        =& \zeta_{F}^{T}(s)^{7} L^{T}(s, \mu_{1})^{7} L^{T}(s, \mu_{1}^{2})^{7} L^{T}(s, \Ad(\pi_{1}))^{20}.
    \end{align*}
    If $\pi_{1}$ is octahedral, we have
    \begin{align*}
        &L^{T}(s, \Pi_{1} \times \Pi_{1} \times \Pi_{1} \times \Pi_{1}) \\
        =& \zeta_{F}^{T}(s)^{3}  L^{T}(s, \eta_{1})^{2} L^{T}(s, \sigma_{1})^{4} L^{T}(s,\sigma_{1} \otimes \eta_{1})^{2} L^{T}(s, \Ad(\pi_{1}))^{6} L^{T}(s,\Ad(\pi_{1}) \otimes \eta_{1})^{6} L^{T}(s, \sigma_{1} \times \sigma_{1}) \\
        &\cdot L^{T}(s, \Ad(\pi_{1}) \times \sigma_{1})^{2} L^{T}(s, \Ad(\pi_{1}) \times \sigma_{1} \otimes \eta_{1})^{2} .
    \end{align*}
    If $\pi_{1}$ is not solvable polyhedral, we have
    \begin{align*}
        &L^{T}(s, \Pi_{1} \times \Pi_{1} \times \Pi_{1} \times \Pi_{1}) \\
        =& \zeta_{F}^{T}(s)^{2} L^{T}(s, \Ad(\pi_{1}))^{3} L^{T}(s, \Sym^{4}(\pi_{1}) \otimes \omega_{1}^{-2} )^{3} L^{T}(s, \Sym^{4}(\pi_{1}) \otimes \omega_{1}^{-2} \times \Ad(\pi_{1}))^{2} \\
        &\cdot  L^{T}(s, \Sym^{4}(\pi_{1}) \otimes \omega_{1}^{-2} \times \Sym^{4}(\pi_{1}) \otimes \omega_{1}^{-2})  .
    \end{align*}

    \item If $\pi_{1}$ is tetrahedral, we have
    \begin{align*}
        &L^{T}(s,\Pi_{1} \times \Pi_{1} \times \Pi_{1} \times \Pi_{2}) \\
        =& L^{T}(s,\Ad(\pi_{2}))^{2} L^{T}(s, \Ad(\pi_{2}) \otimes \mu_{1})^{2} L^{T}(s, \Ad(\pi_{2}) \otimes \mu_{1}^{2})^{2} L^{T}(s,\Ad(\pi_{1}) \times \Ad(\pi_{2}))^{7} .
    \end{align*}
    If $\pi_{1}$ is octahedral, we have   \begin{align}\label{eqn:Pi_1^3_times_Pi_2_pi_1_oct}
    \begin{aligned}
        &L^{T}(s,\Pi_{1} \times \Pi_{1} \times \Pi_{1} \times \Pi_{2}) \\
        =&L^{T}(s, \Ad(\pi_{2})) L^{T}(s, \Ad(\pi_{2}) \otimes \eta_{1}) L^{T}(s, \Ad(\pi_{2}) \times \sigma_{1}) L^{T}(s, \Ad(\pi_{2}) \times \sigma_{1} \otimes \eta_{1}) \\
        &\cdot L^{T}(s, \Ad(\pi_{1}) \times \Ad(\pi_{2}))^{3} L^{T}(s, \Ad(\pi_{1}) \otimes \eta_{1} \times \Ad(\pi_{2}))^{2} L^{T}(s, \Ad(\pi_{1}) \boxtimes \sigma_{1} \times \Ad(\pi_{2})) .
    \end{aligned}
    \end{align}

    \item If $\pi_{1}$ is tetrahedral and $\pi_{2}$ is tetrahedral, we have
    \begin{align*}
        &L^{T}(s,\Pi_{1} \times \Pi_{1} \times \Pi_{2} \times \Pi_{2}) \\
        = &\zeta_{F}^{T}(s)L^{T}(s,\mu_{1}) L^{T}(s,\mu_{1}^{2}) L^{T}(s,\mu_{2}) L^{T}(s,\mu_{2}^{2}) L^{T}(s,\mu_{1} \mu_{2}) L^{T}(s,\mu_{1}^{2} \mu_{2}) L^{T}(s, \mu_{1} \mu_{2}^{2}) L^{T}(s,\mu_{1}^{2} \mu_{2}^{2}) \\
        &\cdot  L^{T}(s,\Ad(\pi_{1}))^{2} L^{T}(s, \Ad(\pi_{2}) )^{2} L^{T}(s,\Ad(\pi_{1}) \otimes \mu_{2})^{2} L^{T}(s,\Ad(\pi_{1}) \otimes \mu_{2}^{2})^{2} \\
        &\cdot L^{T}(s,\Ad(\pi_{2}) \otimes \mu_{1})^{2} L^{T}(s, \Ad(\pi_{2}) \otimes \mu_{1}^{2})^{2} L^{T}(s,\Ad(\pi_{1}) \times \Ad(\pi_{2}))^{4} .
    \end{align*}
    If $\pi_{1}$ is tetrahedral and $\pi_{2}$ is octahedral, we have
    \begin{align*}
        &L^{T}(s,\Pi_{1} \times \Pi_{1} \times \Pi_{2} \times \Pi_{2}) \\
        =& \zeta_{F}^{T}(s) L^{T}(s,\mu_{1}) L^{T}(s,\mu_{1}^{2}) L^{T}(s,\sigma_{2}) L^{T}(s, \sigma_{2} \otimes \mu_{1}) L^{T}(s, \sigma_{2} \otimes \mu_{1}^{2}) L^{T}(s, \Ad(\pi_{1}))^{2} L^{T}(s,\Ad(\pi_{2}))   \\
        &\cdot L^{T}(s, \Ad(\pi_{2}) \otimes \mu_{1}) L^{T}(s, \Ad(\pi_{2}) \otimes \mu_{1}^{2}) L^{T}(s,\Ad(\pi_{2}) \otimes \eta_{2}) L^{T}(s, \Ad(\pi_{2}) \otimes \mu_{1}\eta_{2})  \\
        &\cdot L^{T}(s, \Ad(\pi_{2}) \otimes \mu_{1}^{2}\eta_{2}) L^{T}(s,\Ad(\pi_{1}) \times \sigma_{2})^{2} L^{T}(s,\Ad(\pi_{1}) \times \Ad(\pi_{2}))^{2}  \\
        &\cdot L^{T}(s,\Ad(\pi_{1}) \times \Ad(\pi_{2}) \otimes \eta_{2})^{2} .
    \end{align*}
    If $\pi_{1}$ is tetrahedral and $\pi_{2}$ is not solvable polyhedral, we have
    \begin{align*}
        &L^{T}(s,\Pi_{1} \times \Pi_{1} \times \Pi_{2} \times \Pi_{2}) \\
        =& \zeta_{F}^{T}(s) L^{T}(s,\mu_{1}) L^{T}(s,\mu_{1}^{2}) L^{T}(s,\Ad(\pi_{1}))^{2} L^{T}(s,\Ad(\pi_{2})) L^{T}(s, \Ad(\pi_{2}) \otimes \mu_{1}) L^{T}(s, \Ad(\pi_{2}) \otimes \mu_{1}^{2}) \\
        &\cdot L^{T}(s,\Ad(\pi_{1}) \times \Ad(\pi_{2}))^{2} L^{T}(s, \Sym^{4}(\pi_{2}) \otimes \omega_{2}^{-2}) L^{T}(s, \Sym^{4}(\pi_{2}) \otimes \mu_{1}\omega_{2}^{-2})  \\
        &\cdot L^{T}(s, \Sym^{4}(\pi_{2}) \otimes \mu_{1}^{2}\omega_{2}^{-2}) L^{T}(s, \Sym^{4}(\pi_{2}) \otimes \omega_{2}^{-2} \times \Ad(\pi_{1}))^{2} .
    \end{align*}
    If $\pi_{1}$ is octahedral and $\pi_{2}$ is octahedral, we have
    \begin{align}\label{eqn:Pi_1^2_times_Pi_2^2_pi_1_oct_pi_2_oct}
    \begin{aligned}
        &L^{T}(s,\Pi_{1} \times \Pi_{1} \times \Pi_{2} \times \Pi_{2}) \\
        =& \zeta_{F}^{T}(s) L^{T}(s,\sigma_{1}) L^{T}(s,\sigma_{2}) L^{T}(s,\Ad(\pi_{1})) L^{T}(s,\Ad(\pi_{1}) \otimes \eta_{1}) L^{T}(s,\Ad(\pi_{2})) L^{T}(s,\Ad(\pi_{2}) \otimes \eta_{2}) \\
        &\cdot L^{T}(s,\sigma_{1} \times \sigma_{2}) L^{T}(s,\Ad(\pi_{1}) \times \sigma_{2}) L^{T}(s,\Ad(\pi_{1}) \otimes \eta_{1} \times \sigma_{2}) L^{T}(s, \Ad(\pi_{2}) \times \sigma_{1}) \\
        &\cdot L^{T}(s, \Ad(\pi_{2}) \otimes \eta_{2} \times \sigma_{1})  L^{T}(s,\Ad(\pi_{1}) \times \Ad(\pi_{2})) L^{T}(s,\Ad(\pi_{1}) \otimes \eta_{1} \times \Ad(\pi_{2}))   \\
        &\cdot L^{T}(s, \Ad(\pi_{1}) \times \Ad(\pi_{2}) \otimes \eta_{2}) L^{T}(s,\Ad(\pi_{1}) \otimes \eta_{1} \times \Ad(\pi_{2}) \otimes \eta_{2}) .
    \end{aligned}
    \end{align}
    If $\pi_{1}$ is octahedral and $\pi_{2}$ is not solvable polyhedral, we have
    \begin{align*}
        &L^{T}(s,\Pi_{1} \times \Pi_{1} \times \Pi_{2} \times \Pi_{2}) \\
        =&\zeta_{F}^{T}(s) L^{T}(s,\sigma_{1})  L^{T}(s,\Ad(\pi_{1})) L^{T}(s,\Ad(\pi_{1}) \otimes \eta_{1}) L^{T}(s,\Ad(\pi_{2})) L^{T}(s, \Sym^{4}(\pi_{2}) \otimes \omega_{2}^{-2}) \\
        &\cdot L^{T}(s, \Ad(\pi_{2}) \times \sigma_{1}) L^{T}(s,\Ad(\pi_{1}) \times \Ad(\pi_{2})) L^{T}(s,\Ad(\pi_{1}) \otimes \eta_{1} \times \Ad(\pi_{2})) \\
        &\cdot L^{T}(s, \Sym^{4}(\pi_{2}) \otimes \omega_{2}^{-2} \times \sigma_{1}) L^{T}(s, \Sym^{4}(\pi_{2}) \otimes \omega_{2}^{-2} \times \Ad(\pi_{1})) \\
        &\cdot L^{T}(s, \Sym^{4}(\pi_{2}) \otimes \omega_{2}^{-2} \times \Ad(\pi_{1}) \otimes \eta_{1}) .
    \end{align*}
    If $\pi_{1}$ is not solvable polyhedral and $\pi_{2}$ is not solvable polyhedral, we have
    \begin{align*}
        &L^{T}(s,\Pi_{1} \times \Pi_{1} \times \Pi_{2} \times \Pi_{2}) \\
        =& \zeta_{F}^{T}(s) L^{T}(s, \Ad(\pi_{1})) L^{T}(s, \Ad(\pi_{2})) L^{T}(s, \Sym^{4}(\pi_{1}) \otimes \omega_{1}^{-2}) L^{T}(s, \Sym^{4}(\pi_{2}) \otimes \omega_{2}^{-2}) \\
        &\cdot L^{T}(s,\Ad(\pi_{1}) \times \Ad(\pi_{2})) L^{T}(s, \Sym^{4}(\pi_{1}) \otimes \omega_{1}^{-2} \times \Ad(\pi_{2})) L^{T}(s, \Sym^{4}(\pi_{2}) \otimes \omega_{2}^{-2} \times \Ad(\pi_{1})) \\
        &\cdot L^{T}(s, \Sym^{4}(\pi_{1}) \otimes \omega_{1}^{-2} \times \Sym^{4}(\pi_{2}) \otimes \omega_{2}^{-2}) .
    \end{align*}
    
\end{enumerate}
\end{lem}
\begin{proof}
The crucial idea is the following identity, which follows from Clebsch–Gordan decomposition (e.g. \cite[Lemma 3.3]{Walji_Hecke_occurence_2014} and \cite[Proof of Proposition 5.1]{Wong_Refinements_Strong_Multiplcity_One_2022}),
\begin{equation}\label{eqn:Clebsch-Gordon_decomposition}
    L^{T}(s, \Ad(\pi_{1}) \times \Ad(\pi_{1})) = L^{T}(s, \Sym^{4}(\pi_{1}) \otimes \omega_{1}^{-2}) L^{T}(s, \Ad(\pi_{1})) \zeta_{F}^{T}(s) .
\end{equation}
We then further decompose $\Sym^{4}(\pi_{1})$ according to Theorem \ref{thm:Kim_Shahidi_cuspidality_criterion}. We illustrate some cases as examples.

\textbf{Case (iv) with $\pi_{1}$ being octahedral:}
The cuspidality criterion in Theorem \ref{thm:Kim_Shahidi_cuspidality_criterion} states that
\begin{equation}\label{eqn:Sym4_decomposition_octahedral}
    \Sym^{4} \pi_{1} \otimes \omega_{1}^{-2} \simeq \sigma_{1} \boxplus  \Ad(\pi_{1}) \otimes \eta_{1}
\end{equation}
where $\sigma_{1}$ is a (cuspidal) dihedral representation and $\eta_{1}$ is a quadratic Hecke character.
Hence,
\begin{align*}
    &L^{T}(s, \Pi_{1} \times \Pi_{1} \times \Pi_{1} \times \Pi_{2}) \\
    =& L^{T}(s,\Sym^{4}(\pi_{1}) \otimes \omega_{1}^{-2} \times \Pi_{1} \times \Pi_{2}) L^{T}(s,\Ad(\pi_{1}) \times \Pi_{1} \times \Pi_{2}) L^{T}(s,\Pi_{1} \times \Pi_{2}) \\
    =& L^{T}(s, \sigma_{1} \times \Pi_{1} \times \Pi_{2}) L^{T}(s, \Ad(\pi_{1}) \otimes \eta_{1} \times \Pi_{1} \times \Pi_{2}) L^{T}(s,\Ad(\pi_{1}) \times \Pi_{1} \times \Pi_{2}) L^{T}(s,\Pi_{1} \times \Pi_{2}) \\
    =& L^{T}(s, \sigma_{1} \times \Pi_{1} \times \Pi_{2}) L^{T}(s, \Sym^{4}(\pi_{1}) \otimes \omega_{1}^{-2} \otimes \eta_{1} \times \Pi_{2}) L^{T}(s, \Ad(\pi_{1}) \otimes \eta_{1} \times \Pi_{2}) L^{T}(s, \eta_{1} \otimes \Pi_{2})\\
    &\cdot L^{T}(s,\Sym^{4}(\pi_{1}) \otimes \omega_{1}^{-2} \times \Pi_{2}) L^{T}(s,\Ad(\pi_{1}) \times \Pi_{2}) L^{T}(s, \Pi_{2})  L^{T}(s,\Pi_{1} \times \Pi_{2}) \\
    =& L^{T}(s, \sigma_{1} \times \Pi_{1} \times \Pi_{2}) L^{T}(s, \sigma_{1} \otimes \eta_{1} \times \Pi_{2}) L^{T}(s, \Ad(\pi_{1}) \otimes \eta_{1}^{2} \times \Pi_{2}) L^{T}(s, \Ad(\pi_{1}) \otimes \eta_{1} \times \Pi_{2}) \\
    &\cdot L^{T}(s, \eta_{1} \otimes \Pi_{2}) L^{T}(s,\sigma_{1} \times \Pi_{2}) L^{T}(s,\Ad(\pi_{1}) \otimes \eta_{1} \times \Pi_{2}) L^{T}(s,\Ad(\pi_{1}) \times \Pi_{2}) L^{T}(s, \Pi_{2}) L^{T}(s,\Pi_{1} \times \Pi_{2}) \\
    =& L^{T}(s, \sigma_{1} \boxtimes \Ad(\pi_{1}) \times \Ad(\pi_{2})) L^{T}(s, \sigma_{1} \otimes \eta_{1} \times \Ad(\pi_{2})) L^{T}(s, \Ad(\pi_{1}) \otimes \eta_{1}^{2} \times \Ad(\pi_{2})) \\
    &\cdot L^{T}(s, \Ad(\pi_{1}) \otimes \eta_{1} \times \Ad(\pi_{2})) L^{T}(s, \eta_{1} \otimes \Ad(\pi_{2})) L^{T}(s,\sigma_{1} \times \Ad(\pi_{2})) L^{T}(s,\Ad(\pi_{1}) \otimes \eta_{1} \times \Ad(\pi_{2})) \\
    &\cdot L^{T}(s,\Ad(\pi_{1}) \times \Ad(\pi_{2})) L^{T}(s, \Ad(\pi_{2})) L^{T}(s,\Ad(\pi_{1}) \times \Ad(\pi_{2})) \\
    =& L^{T}(s, \Ad(\pi_{2})) L^{T}(s, \Ad(\pi_{2}) \otimes \eta_{1}) L^{T}(s, \Ad(\pi_{2}) \times \sigma_{1}) L^{T}(s, \Ad(\pi_{2}) \times \sigma_{1} \otimes \eta_{1}) \\
    &\cdot L^{T}(s, \Ad(\pi_{1}) \times \Ad(\pi_{2}))^{3} L^{T}(s, \Ad(\pi_{1}) \otimes \eta_{1} \times \Ad(\pi_{2}))^{2} L^{T}(s, \Ad(\pi_{1}) \boxtimes \sigma_{1} \times \Ad(\pi_{2})) .
\end{align*}

\textbf{Case (v) where both $\pi_{1}$ and $\pi_{2}$ are octahedral:}
From \eqref{eqn:Clebsch-Gordon_decomposition} and \eqref{eqn:Sym4_decomposition_octahedral}, we deduce 
\begin{align*}
    &L^{T}(s, \Pi_{1} \times \Pi_{1} \times \Pi_{2} \times \Pi_{2}) \\
    =&L^{T}(s, \Sym^{4}(\pi_{1}) \otimes \omega_{1}^{-2} \times \Pi_{2} \times \Pi_{2} ) L^{T}(s, \Ad(\pi_{1}) \times \Pi_{2} \times \Pi_{2} ) L^{T}(s, \Pi_{2} \times \Pi_{2} ) \\
    =& L^{T}(s, \Sym^{4}(\pi_{1}) \otimes \omega_{1}^{-2} \times \Sym^{4}(\pi_{2}) \otimes \omega_{2}^{-2} ) L^{T}(s, \Sym^{4}(\pi_{1}) \otimes \omega_{1}^{-2} \times \Ad(\pi_{2}) ) L^{T}(s, \Sym^{4}(\pi_{1}) \otimes \omega_{1}^{-2} ) \\
    &\cdot L^{T}(s, \Ad(\pi_{1}) \times \Sym^{4}(\pi_{2}) \otimes \omega_{2}^{-2} ) L^{T}(s, \Ad(\pi_{1}) \times \Ad(\pi_{2}) ) L^{T}(s, \Ad(\pi_{1})) L^{T}(s, \Sym^{4}(\pi_{2}) \otimes \omega_{2}^{-2}) \\
    &\cdot L^{T}(s, \Ad(\pi_{2}) ) \zeta_{F}^{T}(s) \\
    =& L^{T}(s, \sigma_{1} \times \Sym^{4}(\pi_{2}) \otimes \omega_{2}^{-2} ) L^{T}(s, \Ad(\pi_{1}) \otimes \eta_{1} \times \Sym^{4}(\pi_{2}) \otimes \omega_{2}^{-2} ) L^{T}(s, \sigma_{1} \times \Ad(\pi_{2}) )  \\
    &\cdot L^{T}(s, \Ad(\pi_{1}) \otimes \eta_{1} \times \Ad(\pi_{2}) ) L^{T}(s, \sigma_{1}) L^{T}(s, \Ad(\pi_{1}) \otimes \eta_{1}) L^{T}(s, \Ad(\pi_{1}) \times \Sym^{4}(\pi_{2}) \otimes \omega_{2}^{-2})  \\
    &\cdot L^{T}(s, \Ad(\pi_{1}) \times \Ad(\pi_{2}) ) L^{T}(s, \Ad(\pi_{1})) L^{T}(s, \Sym^{4}(\pi_{2}) \otimes \omega_{2}^{-2}) L^{T}(s, \Ad(\pi_{2})) \zeta_{F}^{T}(s) \\
    =& L^{T}(s, \sigma_{1} \times \sigma_{2}) L^{T}(s, \sigma_{1} \times \Ad(\pi_{2}) \otimes \eta_{2}) L^{T}(s, \Ad(\pi_{1}) \otimes \eta_{1} \times \sigma_{2}) L^{T}(s, \Ad(\pi_{1}) \otimes \eta_{1} \times \Ad(\pi_{2}) \otimes \eta_{2})  \\
    &\cdot L^{T}(s, \sigma_{1} \times \Ad(\pi_{2})) L^{T}(s, \Ad(\pi_{1}) \otimes \eta_{1} \times \Ad(\pi_{2}) ) L^{T}(s, \sigma_{1}) L^{T}(s, \Ad(\pi_{1}) \otimes \eta_{1}) L^{T}(s, \Ad(\pi_{1}) \times \sigma_{2})  \\
    &\cdot L^{T}(s, \Ad(\pi_{1}) \times \Ad(\pi_{2}) \otimes \eta_{2}) L^{T}(s, \Ad(\pi_{1}) \times \Ad(\pi_{2}) ) L^{T}(s, \Ad(\pi_{1})) L^{T}(s, \sigma_{2}) L^{T}(s, \Ad(\pi_{2}) \otimes \eta_{2}) \\
    &\cdot L^{T}(s, \Ad(\pi_{2})) \zeta_{F}^{T}(s) .
\end{align*}
Rearranging terms yields the desired result.
\end{proof}

\begin{proof}[Proof of Lemma~\ref{lem:order_of_L_function_both_non_dihedral}]
This follows directly from the theory of Rankin-Selberg $L$-functions. To illustrate, we present some interesting cases as examples.

\textbf{Case (iv) with $\pi_{1}$ being octahedral:}
We refer to equation \eqref{eqn:Pi_1^3_times_Pi_2_pi_1_oct} in Lemma \ref{lem:L-function_decomposition_both_non_dihedral}.
The first two $L$-functions on the right-hand side of \eqref{eqn:Pi_1^3_times_Pi_2_pi_1_oct} are associated to non-trivial cuspidal unitary automorphic representations and are therefore holomorphic at $s=1$.
The remaining $L$-functions on the right-hand side of \eqref{eqn:Pi_1^3_times_Pi_2_pi_1_oct} are Rankin-Selberg $L$-functions of the form $L^{T}(s, \pi_{3} \times \pi_{4})$, where $\pi_{3}$ and $\pi_{4}$ are cuspidal unitary automorphic representations. Recall that such an $L$-function has a pole at $s=1$ if and only if $\pi_{4} \simeq \widetilde{\pi_{3}}$. It follows that the third, fourth, and fifth $L$-functions on the right-hand side of \eqref{eqn:Pi_1^3_times_Pi_2_pi_1_oct} are holomorphic at $s=1$. Here, we are assuming that $\pi_{1}$ and $\pi_{2}$ are not twist-equivalent, so that $\Ad(\pi_{1}) \not\simeq \Ad(\pi_{2})$.
This further implies that $\Ad(\pi_{1})$ is not twist-equivalent to $\Ad(\pi_{2})$ \cite[Proposition 9.6]{Ramakrishnan_Wang_cuspidality_GL3xGL2_2004}.
Consequently, $L^{T}(s, \Ad(\pi_{1}) \otimes \eta_{1} \times \Ad(\pi_{2}))$ is holomorphic at $s=1$.
The remaining term to consider is $L^{T}(s, \Ad(\pi_{1}) \boxtimes \sigma_{1} \times \Ad(\pi_{2}))$. 
By applying the cuspidality criterion for the functorial product of $\GL(3) \times \GL(2)$ \cite[Theorem 9.1 and equation (9.6)]{Ramakrishnan_Wang_cuspidality_GL3xGL2_2004}, we obtain the decomposition
\[
    \Ad(\pi_{1}) \boxtimes \sigma_{1} \simeq (\Ad(\pi_{1}) \otimes \nu) \boxplus (\Ad(\pi_{1}) \otimes \nu\xi) ,
\]
where $\nu$ is a character of $F$, and $\xi$ is a non-trivial quadratic character. Since $\Ad(\pi_{1})$ is not twist-equivalent to $\Ad(\pi_{2})$, we conclude that $L^{T}(s, \Ad(\pi_{1}) \boxtimes \sigma_{1} \times \Ad(\pi_{2}))$ is holomorphic at $s=1$.

\textbf{Case (v) where both $\pi_{1}$ and $\pi_{2}$ are octahedral:}
We refer to equation \eqref{eqn:Pi_1^2_times_Pi_2^2_pi_1_oct_pi_2_oct} in Lemma \ref{lem:L-function_decomposition_both_non_dihedral} this time.
We claim that every $L$-function on the right-hand side of \eqref{eqn:Pi_1^2_times_Pi_2^2_pi_1_oct_pi_2_oct}, other than $L^{T}(s, \sigma_{1} \times \sigma_{2})$ and $\zeta_{F}^{T}(s)$, is holomorphic at $s=1$. This is because they are either $L$-functions associated to non-trivial cuspidal automorphic representations or Rankin-Selberg $L$-functions of the form $L^{T}(s, \pi_{3} \times \pi_{4})$, where $\pi_{3} \not \simeq \widetilde{\pi_{4}}$. Here, we recall that $\Ad(\pi_{1})$ and $\Ad(\pi_{2})$ are not twist-equivalent, as established in the proof of the previous case.
We now observe that $L^{T}(s, \Pi_{1} \times \Pi_{1} \times \Pi_{2} \times \Pi_{2})$ has a pole of order at least $1$, since $\zeta_{F}^{T}(s)$ has a simple pole at $s=1$. Note that $L^{T}(s, \sigma_{1} \times \sigma_{2})$ is either holomorphic at $s=1$ (when $\sigma_{1} \not\simeq \widetilde{\sigma_{2}}$) or has a simple pole at $s=1$ (when $\sigma_{1} \simeq \widetilde{\sigma_{2}}$). Therefore, $L^{T}(s,\Pi_{1} \times \Pi_{1} \times \Pi_{2} \times \Pi_{2})$ has a pole of order $1$ or $2$ at $s=1$.
\end{proof}

We present a more detailed version of Theorem \ref{thm:main_theorem} below for non-dihedral $\pi_{1}$ and $\pi_{2}$.
\begin{thm}\label{thm:main_theorem_both_non_dihedral}
Let $\pi_{1}, \pi_{2} \in \mathcal{A}_{0}(\GL_{2}(\mathbb{A}_{F}))$ be non-dihedral representations with unitary central characters. Assume that $\pi_{1}$ and $\pi_{2}$ are not twist-equivalent.
\begin{enumerate}[(i)]
    \item If $\pi_{1}$ is tetrahedral, then
    \begin{align*}
        \underline{\delta}(S_{\ast}^{>}(\pi_{1}, \pi_{2})) \geq 
        \begin{cases}
            \frac{1}{16} & \text{if } \pi_{2} \text{ is tetrahedral,} \\
            \frac{1}{14} & \text{if } \pi_{2} \text{ is octahedral,} \\
            \frac{1}{14} & \text{if } \pi_{2} \text{ is non-solvable polyhedral.}
        \end{cases}
    \end{align*}

    \item If $\pi_{1}$ is octahedral, then
    \begin{align*}
        \underline{\delta}(S_{\ast}^{>}(\pi_{1}, \pi_{2})) \geq 
        \begin{cases}
            \frac{1}{9} & \text{if } \pi_{2} \text{ is tetrahedral,} \\
            \frac{1}{10} & \text{if } \pi_{2} \text{ is octahedral,} \\
            \frac{1}{9} & \text{if } \pi_{2} \text{ is non-solvable polyhedral.}
        \end{cases}
    \end{align*}

    \item If $\pi_{1}$ is non-solvable polyhedral, then
    \begin{align*}
        \underline{\delta}(S_{\ast}^{>}(\pi_{1}, \pi_{2})) \geq
        \begin{cases}
            \frac{1}{(2+\sqrt{2})^{2}} \geq \frac{1}{11.657} & \text{if } \pi_{2} \text{ is tetrahedral,} \\
            \frac{1}{(2+\sqrt{2})^{2}} \geq \frac{1}{11.657} & \text{if } \pi_{2} \text{ is octahedral,} \\
            \frac{1}{(2+\sqrt{3})^{2}} \geq \frac{1}{13.929} & \text{if } \pi_{2} \text{ is non-solvable polyhedral.}
        \end{cases}
    \end{align*}
\end{enumerate}
\end{thm}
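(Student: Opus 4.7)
The plan is to encode each density bound as the pole order of a carefully chosen sum of automorphic $L$-functions. Since $a_{v}(\Ad(\pi_{i})) = \abs{a_{v}(\pi_{i})}^{2} - 1$, the set $S_{\ast}^{>}$ equals $\{v : a > b\}$, where $\Pi_{i} = \Ad(\pi_{i})$, $a = a_{v}(\Pi_{1})$, and $b = a_{v}(\Pi_{2})$. Note that $a, b \geq -1$ unconditionally, and $a, b \in [-1, 3]$ in the solvable polyhedral subcases by Ramanujan.

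For each pair of types, I would construct a polynomial $P(x, y) = (x - y)\, f(x, y)$ with $f \geq 0$ on the range of Satake parameters, so that $P(a, b) \leq 0$ on the complement $S_{\ast}^{\leq} := \{v : a \leq b\}$. If additionally $P \leq C$ on the range, then
\begin{align*}
    \sum_{v} \frac{P(a, b)}{Nv^{s}} \; \leq \; \sum_{v \in S_{\ast}^{>}} \frac{P(a, b)}{Nv^{s}} \; \leq \; C \sum_{v \in S_{\ast}^{>}} \frac{1}{Nv^{s}}.
\end{align*}
Expanding $P$ in monomials $a^{i} b^{j}$ and applying Lemma \ref{lem:order_of_L_function_both_non_dihedral} together with the standard pole orders of Rankin-Selberg $L$-functions (using cuspidality of $\Ad(\pi_{i})$ and non-twist-equivalence of $\pi_{1}, \pi_{2}$) produces $\sum_{v} P(a, b)/Nv^{s} = k(-\log(s - 1)) + O(1)$ for a computable $k > 0$. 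Dividing by $-\log(s - 1)$ and taking $\liminf$ yields $\underline{\delta}(S_{\ast}^{>}) \geq k/C$.

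In the solvable polyhedral subcases, Ramanujan forces $a, b \in [-1, 3]$, and I can choose $f$ explicitly on this square. For tet-tet, $f(x, y) = 1 + x$ gives $P = (x - y)(1 + x)$, whose only pole contribution comes from $\sum a^{2}/Nv^{s} \sim -\log(s-1)$ (so $k = 1$) and whose supremum on $[-1, 3]^{2}$ is $C = 16$ at $(x, y) = (3, -1)$; hence $\underline{\delta}(S_{\ast}^{>}) \geq 1/16$. For the remaining solvable subcases I would refine $f$ with factors such as $3 - y$ or higher powers of $1 + x$ and $3 - y$, which exploit the smaller pole orders of $\sum a^{3}/Nv^{s}$ and $\sum b^{3}/Nv^{s}$ when octahedral factors appear (Lemma \ref{lem:order_of_L_function_both_non_dihedral}(i)), yielding the bounds $1/14$, $1/10$, and $1/9$.

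The main obstacle is the non-solvable polyhedral subcases, where Ramanujan is unknown and $P$ cannot be uniformly bounded on the range of Satake parameters. Here I would replace the pointwise bound on $P$ by a Cauchy-Schwarz estimate, using a one-parameter family $P_{\lambda}(x, y) = (x - y)(1 + \lambda(x + 1))$, which still satisfies $P_{\lambda} \leq 0$ on $S_{\ast}^{\leq}$ because $x + 1 \geq 0$. Since $P_{\lambda} \leq 0$ on $S_{\ast}^{\leq}$, we have
\begin{align*}
    \left(\sum_{v} \frac{P_{\lambda}(a, b)}{Nv^{s}}\right)^{2} \leq \left(\sum_{v \in S_{\ast}^{>}} \frac{1}{Nv^{s}}\right) \left(\sum_{v} \frac{P_{\lambda}(a, b)^{2}}{Nv^{s}}\right).
\end{align*}
The left-hand side has leading order $\lambda^{2}(-\log(s - 1))^{2}$, while the pole order of $\sum_{v} P_{\lambda}^{2}/Nv^{s}$ is a quadratic polynomial in $\lambda$ whose coefficients come from the second, third, and fourth moments in Lemma \ref{lem:order_of_L_function_both_non_dihedral}(iii)-(v). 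Optimizing $\lambda > 0$ against the worst-case coefficient values produces the constants $(2 + \sqrt{2})^{2}$ and $(2 + \sqrt{3})^{2}$. The technical subtlety is tracking the worst-case pole order in Lemma \ref{lem:order_of_L_function_both_non_dihedral}(v) and controlling $\sum_{v} a^{3} b/Nv^{s}$, which is not directly covered by Lemma \ref{lem:order_of_L_function_both_non_dihedral}(iv) when $\pi_{1}$ is non-solvable polyhedral.
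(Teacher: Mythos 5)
Your starting point agrees with the paper: both work with $P(A_v, B_v) = (A_v - B_v)(A_v + 1)$, observe that $A_v + 1 = \abs{a_v(\pi_1)}^2 \geq 0$ makes $P \leq 0$ off $S_\ast^{>}$, and compare pole orders of associated $L$-functions. From there, however, the two arguments diverge, and yours has two genuine gaps.

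\textbf{Solvable-polyhedral subcases.} The paper does \emph{not} use a pointwise Ramanujan bound here; it applies Cauchy--Schwarz directly to $\sum_v (A_v - B_v)(A_v+1)C(v)/Nv^s$, giving $\underline{\delta}(S_\ast^{>}) \geq k^2/\sigma$ where $k$ is the pole order of $\sum_v P/Nv^s$ (always $1$) and $\sigma$ is the pole order of $\sum_v P^2/Nv^s$, which Lemma~\ref{lem:order_of_L_function_both_non_dihedral} evaluates to $16$, $14$, $14$, $9$, $10$, $9$ across the six solvable pairings. Your pointwise estimate $\underline{\delta}(S_\ast^{>}) \geq k/C$ with $P = (x-y)(1+x)$ on $[-1,3]^2$ gives $1/16$ for every pairing, since $C = 16$ does not depend on whether the $L$-function pole orders in Lemma~\ref{lem:order_of_L_function_both_non_dihedral}(i),(iii),(v) are small. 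The natural refinements you propose (e.g.\ multiplying $f$ by $3-y$ or by $(1+x)^k$) do not change the ratio $k/C$ past $1/16$: one can check, for example, that $f = (1+x)(3-y)$ gives $k = 4$, $C = 64$, and $f = (1+x)^2$ with $\pi_1$ octahedral gives $k = 3$, $C = 64$. The pole-order savings you want to exploit simply never enter a sup-norm bound; they only enter the $L^2$-type quantity $\sigma$ that Cauchy--Schwarz sees. Without exhibiting a polynomial that actually achieves $1/14$, $1/10$, $1/9$, this part of the argument does not establish the claimed bounds.

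\textbf{Non-solvable polyhedral subcases.} Here you do use Cauchy--Schwarz, but on $P_\lambda = (x-y)(1+\lambda(1+x))$ for $\lambda > 0$. The issue is precisely the one you flag at the end: $P_\lambda^2$ contains an $x^3 y$ term (with coefficient $-2\lambda^2$), and when $\pi_1$ is non-solvable polyhedral the analytic continuation of $L^T(s, \Pi_1 \times \Pi_1 \times \Pi_1 \times \Pi_2)$ to $s = 1$ is not known, so the quantity $\sum_v A_v^3 B_v / Nv^s$ cannot be controlled at all. You cannot set this term aside as a ``worst-case'' pole order because the obstruction is existence of the meromorphic continuation, not uncertainty about its order. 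The paper's fix is a triangle-inequality split \emph{before} Cauchy--Schwarz: writing $(A_v - B_v)(A_v+1) = (A_v^2 - B_v) + A_v(1 - B_v)$, both squares $(A_v^2 - B_v)^2 = A_v^4 - 2A_v^2 B_v + B_v^2$ and $A_v^2(1-B_v)^2 = A_v^2 - 2A_v^2 B_v + A_v^2 B_v^2$ avoid $A_v^3 B_v$ entirely, and the resulting estimate $1 \leq \left(\sqrt{4} + \sqrt{3}\right)\underline{\delta}^{1/2}$ gives the stated $\frac{1}{(2+\sqrt{3})^2}$ for the nsp--nsp case (and $\frac{1}{(2+\sqrt{2})^2}$ for nsp--tet/oct). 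This splitting is the key idea your proposal is missing.
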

\begin{proof}
Let $C = C_{S_{\ast}^{>}}$ be the characteristic function of $S_{\ast}^{>} := S_{\ast}^{>}(\pi_{1}, \pi_{2})$.
We claim that
\begin{align*}
    \sum_{v} \frac{(A_v - B_v)(A_v + 1)}{Nv^{s}} 
    &\leq \sum_{v} \frac{(A_v - B_v)(A_v + 1) C(v)}{Nv^{s}}
\end{align*}
where $A_{v} = a_{v}(\Ad(\pi_{1}))$ and $B_{v} = a_{v}(\Ad(\pi_{2}))$ are traces of the Langlands conjugacy class of $\Ad(\pi_{1})$ and $\Ad(\pi_{2})$ at $v$ respectively and $Nv$ denotes the norm of $v$.
Note that $A_{v} = \abs{a_{v}}^{2}-1$ and hence $A_{v} + 1 \geq 0$, which proves the above inequality.
We now derive two different upper bounds for the sum on the right using Cauchy-Schwarz and triangle inequalities:
\begin{align}\label{eqn:first_inequality_Cauchy}
    &\sum_{v} \frac{(A_v - B_v)(A_v + 1) C(v)}{Nv^{s}}  \notag \\
    \leq &\left( \sum_{v}\frac{A_{v}^{4} - 2 A_{v}^{3}B_{v} + A_{v}^{2}B_{v}^{2} + 2A_{v}^{3} - 4A_{v}^{2}B_{v} + 2A_{v}B_{v}^{2} + A_{v}^{2} - 2A_{v}B_{v} + B_{v}^{2}}{Nv^{s}} \right)^{\frac{1}{2}} \left( \sum_{v \in S_{\ast}^{>}} \frac{1}{Nv^{s}} \right)^{\frac{1}{2}},
\end{align}
and
\begin{align}\label{eqn:second_inequality_Cauchy_triangle}
    &\sum_{v} \frac{(A_v - B_v)(A_v + 1) C(v)}{Nv^{s}}  \notag \\
    \leq& \left( \left( \sum_{v}\frac{A_{v}^{4}-2A_{v}^{2}B_{v} + B_{v}^{2}}{Nv^{s}} \right)^{\frac{1}{2}} + \left(\sum_{v}\frac{A_{v}^{2} -2A_{v}^{2}B_{v} + A_{v}^{2}B_{v}^{2}}{Nv^{s}} \right)^{\frac{1}{2}} \right) \left( \sum_{v \in S_{\ast}^{>}} \frac{1}{Nv^{s}} \right)^{\frac{1}{2}} .
\end{align}
Then, we have to divide these inequalities by $\log (\frac{1}{s-1})$ and take limit inferior as $s \to 1^{+}$. This amounts to finding the order of poles of certain Rankin-Selberg product $L$-functions at $s=1$. For example,
\begin{align*}
    \lim_{s\to 1^{+}} \frac{\sum_{v} \frac{A_{v}B_{v}^{2}}{Nv^{s}}}{  \log (\frac{1}{s-1}) } &= -\ord_{s=1} L^{T}(s, \Pi_{1} \times \Pi_{2} \times \Pi_{2}) \\
    \lim_{s\to 1^{+}} \frac{\sum_{v} \frac{A_{v}^{3}B_{v}}{Nv^{s}}}{  \log (\frac{1}{s-1}) } &= -\ord_{s=1} L^{T}(s, \Pi_{1} \times \Pi_{1} \times \Pi_{1} \times \Pi_{2}) .
\end{align*}

Consider the case where $\pi_{1}$ is tetrahedral or octahedral. We proceed by dividing \eqref{eqn:first_inequality_Cauchy} by $\log (\frac{1}{s-1})$ and taking the limit inferior as $s \to 1^{+}$.
Focusing specifically on the subcase where both $\pi_{1}$ and $\pi_{2}$ are octahedral, this operation yields 
\begin{align*}
    1 \leq (4 - 2 \cdot 0 + 2 + 2 \cdot 1 - 4\cdot 0 + 2 \cdot 0 + 1 - 2\cdot 0 + 1)^{1/2} \underline{\delta}(S_{\ast}^{>})^{1/2} ,
\end{align*}
where all values are determined by Lemma \ref{lem:order_of_L_function_both_non_dihedral}. This gives the lower bound
\begin{align*}
    \underline{\delta}(S_{\ast}^{>}) \geq \frac{1}{10} .
\end{align*}

However, in the case where $\pi_{1}$ is non-solvable polyhedral, inequality \eqref{eqn:first_inequality_Cauchy} cannot be applied because the analytic properties of $L^{T}(s, \Pi_{1} \times \Pi_{1} \times \Pi_{1} \times \Pi_{2})$ are currently unknown, where $\Pi_{i} = \Ad(\pi_{i})$ for $i=1, 2$.
Instead, we divide \eqref{eqn:second_inequality_Cauchy_triangle} by $\log (\frac{1}{s-1})$ and take the limit inferior as $s \to 1^{+}$. Focusing specifically on the subcase where both $\pi_{1}$ and $\pi_{2}$ are non-solvable polyhedral, this operation yields
\begin{align*}
    1 \leq ((3 - 2 \cdot 0 + 1)^{\frac{1}{2}} + (1 - 2 \cdot 0 + 2)^{\frac{1}{2}} ) \underline{\delta}(S)^{\frac{1}{2}} ,
\end{align*}
where all values are determined by Lemma \ref{lem:order_of_L_function_both_non_dihedral}.
This gives the lower bound
\begin{equation*}
    \underline{\delta}(S_{\ast}^{>}) \geq \frac{1}{(2+\sqrt{3})^{2}} \geq \frac{1}{13.929} .
\end{equation*}
\end{proof}

We present a more detailed version of Theorem \ref{thm:improvements_in_Wong} below for non-dihedral $\pi_{1}$ and $\pi_{2}$.
\begin{thm}\label{thm:improvements_in_Wong_both_non_dihedral}
Let $\pi_{1}, \pi_{2} \in \mathcal{A}_{0}(\GL_{2}(\mathbb{A}_{F}))$ be non-dihedral representations with unitary central characters. Assume that $\pi_{1}$ and $\pi_{2}$ are not twist-equivalent. Then
\begin{align*}
    \underline{\delta}(S_{\ast}(\pi_{1}, \pi_{2})) \geq 
        \begin{cases}
            \frac{1}{8} & \text{if } \pi_{1} \text{ is tetrahedral and } \pi_{2} \text{ is tetrahedral,} \\
            \frac{4}{17} & \text{if } \pi_{1} \text{ is tetrahedral and } \pi_{2} \text{ is octahedral,} \\
            \frac{11-7\sqrt{2}}{7} & \text{if } \pi_{1} \text{ is tetrahedral and } \pi_{2} \text{ is non-solvable polyhedral,} \\
            \frac{1}{5} & \text{if } \pi_{1} \text{ is octahedral and } \pi_{2} \text{ is octahedral,} \\
            \frac{29-18\sqrt{2}}{18} & \text{if } \pi_{1} \text{ is octahedral and } \pi_{2} \text{ is non-solvable polyhedral,} \\
            14-8\sqrt{3} & \text{if } \pi_{1} \text{ is non-solvable polyhedral and } \pi_{2} \text{ is non-solvable polyhedral.} 
        \end{cases}
\end{align*}
\end{thm}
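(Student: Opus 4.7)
The plan is to parallel the proof of Theorem \ref{thm:main_theorem_both_non_dihedral}, this time working with $(A_v - B_v)^2$ and the indicator $C_*$ of $S_*$. Since $(A_v - B_v)^2$ vanishes exactly off $S_*$, we have
\[ \sum_v \frac{(A_v - B_v)^2}{Nv^s} = \sum_v \frac{(A_v - B_v)^2 C_*(v)}{Nv^s}, \]
and expanding the square shows, via the non-twist-equivalence hypothesis (which forces $L^T(s, \Pi_1 \times \Pi_2)$ to be holomorphic at $s = 1$) together with the simple pole of $L^T(s, \Pi_i \times \Pi_i)$, that the left-hand side is asymptotic to $2(-\log(s-1))$ as $s \to 1^+$.

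When both $\pi_1$ and $\pi_2$ are tetrahedral or octahedral, I apply Cauchy--Schwarz directly:
\[ \sum_v \frac{(A_v-B_v)^2 C_*(v)}{Nv^s} \leq \left(\sum_v \frac{(A_v-B_v)^4}{Nv^s}\right)^{1/2} \left(\sum_{v \in S_*} \frac{1}{Nv^s}\right)^{1/2}. \]
Expanding $(A_v-B_v)^4 = A_v^4 - 4 A_v^3 B_v + 6 A_v^2 B_v^2 - 4 A_v B_v^3 + B_v^4$ and using parts (iii)--(v) of Lemma \ref{lem:order_of_L_function_both_non_dihedral}---with the larger value of $-\ord_{s=1} L^T(s, \Pi_1 \times \Pi_1 \times \Pi_2 \times \Pi_2)$ as the worst case in part (v)---the asymptotic constant $M_4$ satisfying $\sum_v (A_v-B_v)^4/Nv^s \sim M_4 \cdot (-\log(s-1))$ equals $32, 17, 20$ in the tet--tet, tet--oct, and oct--oct cases respectively. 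The Cauchy--Schwarz inequality then gives $\underline{\delta}(S_*) \geq 4/M_4$, yielding $1/8$, $4/17$, and $1/5$.

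For the remaining cases involving a non-solvable polyhedral representation, the fourth-moment approach breaks down: Lemma \ref{lem:order_of_L_function_both_non_dihedral}(iv) only applies when the representation appearing cubed is tetrahedral or octahedral, so $-\ord_{s=1} L^T(s, \Pi_1^{\times 3} \times \Pi_2)$ (resp.\ $-\ord_{s=1} L^T(s, \Pi_1 \times \Pi_2^{\times 3})$) cannot be controlled when $\pi_1$ (resp.\ $\pi_2$) is non-solvable polyhedral. I bypass this obstruction by recognising $S_* = S_*^>(\pi_1, \pi_2) \sqcup S_*^>(\pi_2, \pi_1)$ as a disjoint union and invoking the superadditivity of lower Dirichlet density on disjoint sets,
\[ \underline{\delta}(S_*) \geq \underline{\delta}(S_*^>(\pi_1, \pi_2)) + \underline{\delta}(S_*^>(\pi_2, \pi_1)). \]
Applying Theorem \ref{thm:main_theorem_both_non_dihedral} to each term, with $\pi_1$ and $\pi_2$ swapped in the second, produces $\tfrac{1}{14} + \tfrac{1}{(2+\sqrt{2})^2} = \tfrac{11-7\sqrt{2}}{7}$ for tet--nonsolvable, $\tfrac{1}{9} + \tfrac{1}{(2+\sqrt{2})^2} = \tfrac{29-18\sqrt{2}}{18}$ for oct--nonsolvable, and $\tfrac{2}{(2+\sqrt{3})^2} = 14-8\sqrt{3}$ for nonsolvable--nonsolvable.

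The main obstacle is strategic rather than technical: the direct fourth-moment method is blocked precisely when a non-solvable polyhedral factor appears cubed, and the key observation is that recasting $S_*$ as the disjoint union of the two $S_*^>$ sets lets Theorem \ref{thm:main_theorem_both_non_dihedral} be reused with both orderings of $\pi_1, \pi_2$ to cover exactly these troublesome cases.
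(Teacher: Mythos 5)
Your proposal is correct and follows essentially the same route as the paper: for the tetrahedral/octahedral cases you apply Cauchy--Schwarz to $\sum_v (A_v-B_v)^2 C_*(v)/Nv^s$ and bound the fourth-moment factor via Lemma~\ref{lem:order_of_L_function_both_non_dihedral}, taking the worst case in part~(v); for the cases with a non-solvable polyhedral factor you switch to the superadditivity $\underline{\delta}(S_*) \geq \underline{\delta}(S_*^>(\pi_1,\pi_2)) + \underline{\delta}(S_*^>(\pi_2,\pi_1))$ combined with Theorem~\ref{thm:main_theorem_both_non_dihedral}, exactly as the paper does. The only cosmetic difference is that you correctly note the first step is an equality rather than an inequality (since $(A_v-B_v)^2$ vanishes off $S_*$), and your diagnosis that the obstruction is the lack of control on $L^T(s, \Pi_1^{\times 3}\times \Pi_2)$ when $\pi_1$ is non-solvable polyhedral matches the paper's stated reason.
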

\begin{proof}
Let $C = C_{S_{\ast}}$ be the characteristic function of $S_{\ast} := S_{\ast}(\pi_{1}, \pi_{2})$.
We consider the following inequality:
\begin{align}\label{eqn:third_inequality_Cauchy}
    \sum_{v} \frac{(A_v - B_v)^{2}}{Nv^{s}} 
    &\leq \sum_{v} \frac{(A_v - B_v)^{2}C(v)}{Nv^{s}} \notag \\
    &\leq  \left( \sum_{v}\frac{A_{v}^{4} - 4A_{v}^{3}B_{v} + 6A_{v}^{2}B_{v}^{2} - 4A_{v}B_{v}^{3} + B_{v}^{4}}{Nv^{s}} \right)^{\frac{1}{2}} \left( \sum_{v \in S_{\ast}} \frac{1}{Nv^{s}} \right)^{\frac{1}{2}} ,
\end{align}
where $A_{v} = a_{v}(\Ad(\pi_{1}))$ and $B_{v} = a_{v}(\Ad(\pi_{2}))$ are traces of the Langlands conjugacy class of $\Ad(\pi_{1})$ and $\Ad(\pi_{2})$ at $v$, respectively.

Consider the case where $\pi_{1}$ and $\pi_{2}$ are tetrahedral or octahedral. We proceed by dividing \eqref{eqn:third_inequality_Cauchy} by $\log (\frac{1}{s-1})$ and taking the limit inferior as $s \to 1^{+}$. 
Focusing specifically on the subcase where both $\pi_{1}$ and $\pi_{2}$ are octahedral, this operation yields
\begin{align*}
    2 \leq (4 - 4 \cdot 0 + 6 \cdot 2 - 4 \cdot 0 + 4)^{\frac{1}{2}} \underline{\delta}(S_{\ast})^{\frac{1}{2}} ,
\end{align*}
which leads to
\begin{equation*}
    \underline{\delta}(S_{\ast}) \geq \frac{1}{5} .
\end{equation*}

Let us work on the subcase where $\pi_{1}$ is tetrahedral and $\pi_{2}$ is octahedral. We proceed similarly to obtain
\begin{align*}
    2 \leq (7 - 0 + 6 - 0 + 4)^{\frac{1}{2}} \underline{\delta}(S_{\ast})^{\frac{1}{2}} ,
\end{align*}
which leads to
\begin{equation*}
    \underline{\delta}(S_{\ast}) \geq \frac{4}{17} = \frac{1}{4.25} .
\end{equation*}

However, when any $\pi_{1}$ or $\pi_{2}$ is non-solvable polyhedral, we need another treatment. This is because the analytic properties of $L^{T}(s, \Pi_{i} \times \Pi_{i} \times \Pi_{i} \times \Pi_{j})$ are currently unknown, when $\pi_{i}$ is non-solvable polyhedral, where $\Pi_{i} = \Ad(\pi_{i})$ and $j \neq i$. The superadditivity of limit inferior ensures the superadditivity of lower Dirichlet density
\begin{align*}
    \underline{\delta}(S_{\ast}) = \underline{\delta}(S_{\ast}^{>}(\pi_{1}, \pi_{2}) \sqcup S_{\ast}^{>}(\pi_{2}, \pi_{1})) \geq \underline{\delta}(S_{\ast}^{>}(\pi_{1}, \pi_{2})) + \underline{\delta}(S_{\ast}^{>}(\pi_{2}, \pi_{1})) .
\end{align*}
In particular, when $\pi_{1}$ is tetrahedral and $\pi_{2}$ is non-solvable polyhedral, applying Theorem \ref{thm:main_theorem_both_non_dihedral} and the superadditivity property immediately yields
\begin{align*}
    \underline{\delta}(S_{\ast}) \geq \frac{1}{14} + \frac{1}{(2+\sqrt{2})^{2}} = \frac{11-7\sqrt{2}}{7} \geq \frac{1}{6.361} .
\end{align*}
\end{proof}

\section{Both $\pi_{1}$ and $\pi_{2}$ are dihedral}\label{sec:both_dihedral}

Recall the notation established in Section \ref{subsec:Cuspidality_of_symmetric_power}.
Let $\pi_{i} \in \mathcal{A}_{0}(\GL_{2}(\mathbb{A}_{F}))$ be dihedral for $i=1, 2$. We know that $\pi_{i}$ is induced from some Hecke characters $\psi_{i}$ of quadratic extension $K_{i}$ of $F$. We write $\pi_{i} = I_{K_{i}}^{F}(\psi_{i})$ and $\Pi_{i} = \Ad(\pi_{i})$. Set $\nu_{i} = \psi_{i}/\psi_{i}^{\tau_{i}}$, where $\tau_{i}$ is the non-trivial element in $\Gal(K_{i}/F)$.
We also let $\chi_{i}$ be the quadratic character associated to $K_{i}/F$.

We adopt the approach of Walji \cite{Walji_Strong_Multiplicity_One_GL2_2014} and Wong \cite{Wong_Refinements_Strong_Multiplcity_One_2022} in classifying the dihedral representations $\pi_{i}$ based on whether property P holds and in distinguishing whether $\pi_{1}$ and $\pi_{2}$ can be induced from the same quadratic extension.
Wong's method relies on the Ramanujan-Petersson conjecture. However, in some cases where $\pi_{1}$ and $\pi_{2}$ cannot be induced from the same quadratic extension, we can refine Wong's bound by working with quadruple Rankin-Selberg products.

\begin{lem}\label{lem:order_of_L_function_both_dihedral_pi2_NP_diff_K}
Let $\pi_{1}, \pi_{2} \in \mathcal{A}_{0}(\GL_{2}(\mathbb{A}_{F}))$ be non-twist-equivalent dihedral representations with unitary central characters.
Further assume that $\pi_{1}$ and $\pi_{2}$ cannot be induced from the same quadratic extension and that $\pi_{2}$ does not satisfy property $P$.
Let $T$ be the set of all the infinite places as well as the finite places at which $\pi_{1}$ or $\pi_{2}$ is ramified. 
Then
\begin{enumerate}[(i)]
    \item \cite[p.4996-4997]{Walji_Strong_Multiplicity_One_GL2_2014}
    \begin{align*}
        -\ord_{s=1}L^{T}(s,\Pi_{1} \times \Pi_{1}) = 
        \begin{cases}
            3 & \text{if } \pi_{1} \text{ satisfies property P,} \\
            2 & \text{if } \pi_{1} \text{ does not satisfy property P.}
        \end{cases}
    \end{align*}

    \item \cite[p.4996-4997]{Walji_Strong_Multiplicity_One_GL2_2014}
    \begin{align*}
        -\ord_{s=1}L^{T}(s,\Pi_{1} \times \Pi_{2}) = 0 .
    \end{align*}

    \item 
    \begin{align*}
        -\ord_{s=1}L^{T}(s,\Pi_{1} \times \Pi_{1} \times \Pi_{1}) = 
        \begin{cases}
            6 & \text{if } \pi_{1} \text{ satisfies property P,} \\
            3 & \parbox{0.53\textwidth}{if $\pi_{1}$ does not satisfy property P and $I_{K_{1}}^{F}(\nu_{1})$ satisfies property P,} \\
            4 & \text{if } \pi_{1} \text{ satisfies property Q,} \\
            3 & \text{if } \pi_{1} \text{ satisfies property R.}
        \end{cases}
    \end{align*}

    \item 
    \begin{align*}
        -\ord_{s=1}L^{T}(s,\Pi_{1} \times \Pi_{1} \times \Pi_{2}) = 0 .
    \end{align*}
    
    \item 
    \begin{align*}
        -\ord_{s=1}L^{T}(s, \Pi_{1} \times \Pi_{2} \times \Pi_{2}) = 0 .
    \end{align*}
    
    \item 
    \begin{align*}
        -\ord_{s=1}L^{T}(s,\Pi_{1} \times \Pi_{1} \times \Pi_{1} \times \Pi_{1}) = 
        \begin{cases}
            21 & \text{if } \pi_{1} \text{ satisfies property P,} \\
            11 & \parbox{0.485\textwidth}{if $\pi_{1}$ does not satisfy property P and $I_{K_{1}}^{F}(\nu_{1})$ satisfies property P,} \\
            14 & \text{if } \pi_{1} \text{ satisfies property Q,} \\
            10 & \text{if } \pi_{1} \text{ satisfies property R.}
        \end{cases}
    \end{align*}
    
    \item 
    \begin{align*}
        -\ord_{s=1}L^{T}(s,\Pi_{1} \times \Pi_{1} \times \Pi_{1} \times \Pi_{2}) =
        \begin{cases}
            0 & \text{if } \pi_{1} \text{ satisfies property P,} \\
            0 & \parbox{0.4\textwidth}{if $\pi_{1}$ does not satisfy property P and $I_{K_{1}}^{F}(\nu_{1})$ satisfies property P,} \\
            0 & \text{if } \pi_{1} \text{ satisfies property Q,} \\
            0 \text{ or } 1 & \text{if } \pi_{1} \text{ satisfies property R.}
        \end{cases}
    \end{align*}
    
    \item 
    \begin{align*}
        -\ord_{s=1}L^{T}(s,\Pi_{1} \times \Pi_{1} \times \Pi_{2} \times \Pi_{2})
        = 
        \begin{cases}
            6 & \text{if } \pi_{1} \text{ satisfies property P,} \\
            4 & \text{if } \pi_{1} \text{ does not satisfy property P.}
        \end{cases}
    \end{align*}

    \item 
    \begin{align*}
        &-\ord_{s=1}L^{T}(s,\Pi_{1} \times \Pi_{2} \times \Pi_{2} \times \Pi_{2}) \\
        =& 
        \begin{cases}
            0 & \text{if } \pi_{1} \text{ satisfies property P and } I_{K_{2}}^{F}(\nu_{2}) \text{ satisfies property P,} \\
            0 & \text{if } \pi_{1} \text{ satisfies property P and } \pi_{2} \text{ satisfies property Q,} \\
            0 \text{ or } 1 & \text{if } \pi_{1} \text{ satisfies property P and } \pi_{2} \text{ satisfies property R,} \\
            0 & \text{if } \pi_{1} \text{ does not satisfy property P and } I_{K_{2}}^{F}(\nu_{2}) \text{ satisfies property P,} \\
            0 & \text{if } \pi_{1} \text{ does not satisfy property P and } \pi_{2} \text{ satisfies property Q,} \\
            0 \text{ or } 1 & \text{if } \pi_{1} \text{ does not satisfy property P and } \pi_{2} \text{ satisfies property R.}
        \end{cases}
    \end{align*}
\end{enumerate}
\end{lem}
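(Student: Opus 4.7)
The plan is to reduce each of the nine statements to an analytic computation on the isobaric pieces of $\Pi_{i_{1}} \times \cdots \times \Pi_{i_{k}}$ after substituting the adjoint decomposition from Lemma \ref{lem:adjoint_lift_decomposition_for_dihedral}. Write $\pi_{i} = I_{K_{i}}^{F}(\psi_{i})$, let $\chi_{i}$ be the quadratic character of $F$ associated to $K_{i}/F$, let $\tau_{i}$ generate $\Gal(K_{i}/F)$, and set $\nu_{i} = \psi_{i}/\psi_{i}^{\tau_{i}}$. Lemma \ref{lem:adjoint_lift_decomposition_for_dihedral} then gives either $\Pi_{i} \simeq \chi_{i} \boxplus \nu_{i} \boxplus \nu_{i}\chi_{i}$ (when $\pi_{i}$ satisfies property P) or $\Pi_{i} \simeq \chi_{i} \boxplus I_{K_{i}}^{F}(\nu_{i})$ (when not); since $\pi_{2}$ is assumed not to satisfy property P, the second form always applies to $\Pi_{2}$.

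For each product I would distribute using $(\sigma \boxplus \sigma^{\prime}) \times \rho = (\sigma \times \rho) \boxplus (\sigma^{\prime} \times \rho)$, and convert mixed dihedral-dihedral tensors by the Mackey-type identity $I_{K}^{F}(\mu) \boxtimes I_{K}^{F}(\mu^{\prime}) \simeq I_{K}^{F}(\mu\mu^{\prime}) \boxplus I_{K}^{F}(\mu \mu^{\prime \tau})$. The pole order at $s=1$ is then read off by the Jacquet--Shalika and Shahidi criteria recalled in Section \ref{sec:background}: $L^{T}(s,\chi)$ has a simple pole exactly when $\chi$ is trivial, $L^{T}(s, I_{K}^{F}(\mu))$ picks up a simple pole exactly when $I_{K}^{F}(\mu)$ fails to be cuspidal and the resulting character of $F$ is trivial, and $L^{T}(s,\pi \times \pi^{\prime})$ of two unitary cuspidals has a simple pole iff $\pi^{\prime} \simeq \widetilde{\pi}$. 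The ``off-diagonal'' statements (ii), (iv), (v), and the generic subcases of (ix) then reduce to showing that every summand that appears is either a nontrivial Hecke character or a non-self-dual pair of cuspidals; here the two key inputs are $K_{1} \neq K_{2}$ (so $\chi_{1} \neq \chi_{2}$ and no $I_{K_{1}}^{F}(\mu)$ is isomorphic to an $I_{K_{2}}^{F}(\mu^{\prime})$), together with the assumption $\Ad(\pi_{1}) \not\simeq \Ad(\pi_{2})$ coming from non-twist-equivalence.

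The ``diagonal'' statements (i), (iii), (vi), (viii) and the non-generic subcases of (vii) and (ix) instead count how many trivial or $\tau_{i}$-self-conjugate pieces occur among powers of $\nu_{i}$, which is precisely what properties P, Q, R are designed to track: the dichotomy Q versus R is exactly the dichotomy between $L^{T}(s, I_{K}^{F}(\nu^{3}))$ contributing a pole or staying holomorphic. I expect the main obstacle to be the bookkeeping in case (ix): there, $\Pi_{1} \times \Pi_{2} \times \Pi_{2} \times \Pi_{2}$ mixes both quadratic fields, so after killing all cross-terms that vanish by $K_{1} \neq K_{2}$, the remaining pole count is governed by the cube $\Pi_{2} \times \Pi_{2} \times \Pi_{2}$ (evaluated as in (iii) but for $\pi_{2}$), and one must check that the twist by the $\chi_{1}$ or $I_{K_{1}}^{F}(\nu_{1})$ factor of $\Pi_{1}$ never accidentally matches a dual. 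The same mechanism, with the roles of the indices swapped, governs case (vii) when $\pi_{1}$ satisfies property R, and accounts for the residual ``$0$ or $1$'' ambiguity in both parts.
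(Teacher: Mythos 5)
Your overall strategy---substitute Lemma \ref{lem:adjoint_lift_decomposition_for_dihedral}, distribute the tensor product over $\boxplus$, apply the Mackey identity $I_K^F(\mu)\boxtimes I_K^F(\mu') \simeq I_K^F(\mu\mu')\boxplus I_K^F(\mu(\mu')^\tau)$ for products from the same $K$, and read off pole orders from the Jacquet--Shalika/Shahidi criteria---is exactly the paper's approach, and your observation that property Q versus R governs whether $L^T(s,I_K^F(\nu^3))$ contributes a pole is the right bookkeeping.

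However, there is a gap in the parenthetical ``$K_1\neq K_2$ so $\chi_1\neq\chi_2$ and no $I_{K_1}^F(\mu)$ is isomorphic to an $I_{K_2}^F(\mu')$.'' As a general statement this is false: a dihedral representation can be induced from more than one quadratic extension (this is exactly what happens when it satisfies property P), so $K_1\neq K_2$ alone does not preclude $I_{K_1}^F(\mu)\simeq I_{K_2}^F(\mu')$. The paper avoids this by a direct Chebotarev argument: at the density-$\frac14$ set of places split in $K_1$ and inert in $K_2$, the Satake parameters of $I_{K_1}^F(\nu_1)$ are $\{\alpha_v,\alpha_v^{-1}\}$ with product $1$, while those of $I_{K_2}^F(\nu_2)$ are $\{1,-1\}$ with product $-1$ --- equivalently, $I_{K_i}^F(\nu_i)$ has central character $\chi_i$ because $\nu_i=\psi_i/\psi_i^{\tau_i}$ restricts trivially to $\mathbb{A}_F^\times$, and $\chi_1\neq\chi_2$. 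You need one of these observations (special to $\mu$ of the form $\nu_i$) to justify your claim; stated for arbitrary $\mu,\mu'$ it does not follow from $K_1\neq K_2$. Relatedly, in the cases where $\pi_1$ or $I_{K_2}^F(\nu_2)$ satisfies property P, the pieces $\psi_1/\psi_1^{\tau_1}$ and $\nu_2/\nu_2^{\tau_2}$ descend to quadratic Hecke characters of $F$, and one must separately show their product is nontrivial; the paper does this with another density count (a nontrivial quadratic character is $1$ on a density-$\frac12$ set, but both characters are $1$ on a set of density at least $\frac34$). Your outline omits this step. With those two points supplied your argument matches the paper's.
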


To prove the above lemma, we need to decompose the corresponding $L$-functions, as stated in the following two lemmas. We begin with the decomposition of $L$-functions for products of $\Ad(\pi_{i})$, where $i=1,2$, and exactly one of $\pi_{1}$ or $\pi_{2}$ satisfies property P.

\begin{lem}\label{lem:L-function_decomposition_dihedral_diff_quad_ext_P_and_NP}
Let $\pi_{1}, \pi_{2} \in \mathcal{A}_{0}(\GL_{2}(\mathbb{A}_{F}))$ be dihedral representations with unitary central characters $\omega_{1}, \omega_{2}$ respectively.
Assume $\pi_{1}$ and $\pi_{2}$ cannot be induced from the same quadratic extension. Assume $\pi_{1}$ satisfies property P and $\pi_{2}$ does not satisfy property P.
Let $T$ be the set of all the infinite places as well as the finite places at which $\pi_{1}$ or $\pi_{2}$ is ramified. 
\begin{enumerate}[(i)]

    

    \item We have
    \begin{align*}
        &L^{T}(s, \Pi_{1} \times \Pi_{1} \times \Pi_{1}) \\
        =& \zeta_{F}^{T}(s)^{6} L^{T}(s, \chi_{1})^{7} L^{T}(s, (\psi_{1}/\psi_{1}^{\tau_{1}}))^{7} L^{T}(s, (\psi_{1}/\psi_{1}^{\tau_{1}})\chi_{1})^{7} .
    \end{align*}
    
    \item We have
    \begin{align*}
        &L^{T}(s, \Pi_{1} \times \Pi_{1} \times \Pi_{2}) \\
        =& L^{T}(s,\chi_{2})^{3} L^{T}(s, \chi_{1}\chi_{2})^{2}  L^{T}(s, (\psi_{1}/\psi_{1}^{\tau_{1}})\chi_{2})^{2} L^{T}(s, (\psi_{1}/\psi_{1}^{\tau})\chi_{1}\chi_{2})^{2} L^{T}(s, I_{K_{2}}^{F}(\psi_{2}/\psi_{2}^{\tau_{2}}))^{3} \\
        &\cdot L^{T}(s, I_{K_{2}}^{F}(\psi_{2}/\psi_{2}^{\tau_{2}}) \otimes \chi_{1})^{2}  L^{T}(s, I_{K_{2}}^{F}(\psi_{2}/\psi_{2}^{\tau_{2}}) \otimes \psi_{1}/\psi_{1}^{\tau_{1}})^{2} L^{T}(s, I_{K_{2}}^{F}(\psi_{2}/\psi_{2}^{\tau_{2}}) \otimes (\psi_{1}/\psi_{1}^{\tau_{1}})\chi_{1})^{2} .
    \end{align*}

    \item If $I_{K_{2}}^{F}(\nu_{2})$ satisfies property P, we have
    \begin{align}\label{eqn:decomp_122_dihedral}
    \begin{aligned}
        &L^{T}(s, \Pi_{1} \times \Pi_{2} \times \Pi_{2}) \\
        =& L^{T}(s, \chi_{1})^{2} L^{T}(s, (\psi_{1}/\psi_{1}^{\tau_{1}}))^{2} L^{T}(s, (\psi_{1}/\psi_{1}^{\tau_{1}})\chi_{1})^{2} L^{T}(s, \chi_{1}\chi_{2}) L^{T}(s, (\psi_{1}/\psi_{1}^{\tau_{1}})\chi_{2}) \\
        &\cdot L^{T}(s, (\psi_{1}/\psi_{1}^{\tau_{1}})\chi_{1}\chi_{2})  L^{T}(s, I_{K_{2}}^{F}(\psi_{2}/\psi_{2}^{\tau_{2}}) \otimes \chi_{1})^{2} L^{T}(s, I_{K_{2}}^{F}(\psi_{2}/\psi_{2}^{\tau_{2}}) \otimes (\psi_{1}/\psi_{1}^{\tau_{1}}))^{2} \\
        &\cdot  L^{T}(s, I_{K_{2}}^{F}(\psi_{2}/\psi_{2}^{\tau_{2}}) \otimes (\psi_{1}/\psi_{1}^{\tau_{1}})\chi_{1})^{2} L^{T}(s, (\nu_{2}/\nu_{2}^{\tau_{2}})\chi_{1}) L^{T}(s, (\nu_{2}/\nu_{2}^{\tau_{2}})\chi_{1}\chi_{2}) \\
        &\cdot L^{T}(s, (\psi_{1}/\psi_{1}^{\tau_{1}})(\nu_{2}/\nu_{2}^{\tau_{2}})) L^{T}(s, (\psi_{1}/\psi_{1}^{\tau_{1}})(\nu_{2}/\nu_{2}^{\tau_{2}})\chi_{2}) L^{T}(s, (\psi_{1}/\psi_{1}^{\tau_{1}})(\nu_{2}/\nu_{2}^{\tau_{2}})\chi_{1}) \\
        &\cdot  L^{T}(s, (\psi_{1}/\psi_{1}^{\tau_{1}})(\nu_{2}/\nu_{2}^{\tau_{2}})\chi_{1}\chi_{2}) .
    \end{aligned}
    \end{align}
    If $I_{K_{2}}^{F}(\nu_{2})$ does not satisfy property P, we have
    \begin{align*}
        &L^{T}(s, \Pi_{1} \times \Pi_{2} \times \Pi_{2}) \\
        =& L^{T}(s, \chi_{1})^{2} L^{T}(s, (\psi_{1}/\psi_{1}^{\tau_{1}}))^{2} L^{T}(s, (\psi_{1}/\psi_{1}^{\tau_{1}})\chi_{1})^{2} L^{T}(s, \chi_{1}\chi_{2}) L^{T}(s, (\psi_{1}/\psi_{1}^{\tau_{1}})\chi_{2}) \\
        &\cdot L^{T}(s, (\psi_{1}/\psi_{1}^{\tau_{1}})\chi_{1}\chi_{2}) L^{T}(s, I_{K_{2}}^{F}(\psi_{2}/\psi_{2}^{\tau_{2}}) \otimes \chi_{1})^{2} L^{T}(s, I_{K_{2}}^{F}(\psi_{2}/\psi_{2}^{\tau_{2}}) \otimes (\psi_{1}/\psi_{1}^{\tau_{1}}))^{2} \\
        &\cdot L^{T}(s, I_{K_{2}}^{F}(\psi_{2}/\psi_{2}^{\tau_{2}}) \otimes (\psi_{1}/\psi_{1}^{\tau_{1}})\chi_{1})^{2} L^{T}(s, I_{K_{2}}^{F}(\nu_{2}/\nu_{2}^{\tau_{2}}) \otimes \chi_{1}) L^{T}(s, I_{K_{2}}^{F}(\nu_{2}/\nu_{2}^{\tau_{2}}) \otimes (\psi_{1}/\psi_{1}^{\tau_{1}}))\\
        &\cdot  L^{T}(s, I_{K_{2}}^{F}(\nu_{2}/\nu_{2}^{\tau_{2}}) \otimes (\psi_{1}/\psi_{1}^{\tau_{1}})\chi_{1}) .
    \end{align*}

    \item If $I_{K_{2}}^{F}(\nu_{2})$ satisfies property P, we have
    \begin{align}\label{eqn:decomp_222_dihedral_P}
    \begin{aligned}
        &L^{T}(s, \Pi_{2} \times \Pi_{2} \times \Pi_{2}) \\
        =& \zeta_{F}^{T}(s)^{3} L^{T}(s, \chi_{2})^{4} L^{T}(s, I_{K_{2}}^{F}(\psi_{2}/\psi_{2}^{\tau_{2}}))^{7} L^{T}(s, \nu_{2}/\nu_{2}^{\tau_{2}})^{3} L^{T}(s, (\nu_{2}/\nu_{2}^{\tau_{2}}) \chi_{2})^{3} .
    \end{aligned}
    \end{align}
    If $I_{K_{2}}^{F}(\nu_{2})$ does not satisfy property P, we have
    \begin{align}\label{eqn:decomp_222_dihedral_NP}
    \begin{aligned}
        &L^{T}(s, \Pi_{2} \times \Pi_{2} \times \Pi_{2})\\
        =& \zeta_{F}^{T}(s)^{3} L^{T}(s, \chi_{2})^{4} L^{T}(s, I_{K_{2}}^{F}(\psi_{2}/\psi_{2}^{\tau_{2}}))^{6} L^{T}(s, I_{K_{2}}^{F}(\nu_{2}/\nu_{2}^{\tau_{2}}))^{3} L^{T}(s, I_{K_{2}}^{F}(\nu_{2}^{3})) .
    \end{aligned}
    \end{align}

    \item We have
    \begin{align*}
        &L^{T}(s, \Pi_{1} \times \Pi_{1} \times \Pi_{1} \times \Pi_{1}) \\
        =&\zeta_{F}^{T}(s)^{21} L^{T}(s, \chi_{1})^{20} L^{T}(s, \psi_{1}/\psi_{1}^{\tau_{1}})^{20} L^{T}(s, (\psi_{1}/\psi_{1}^{\tau_{1}})\chi_{1})^{20}  .
    \end{align*}

    \item We have
    \begin{align*}
        &L^{T}(s, \Pi_{1} \times \Pi_{1} \times \Pi_{1} \times \Pi_{2}) \\
        =& L^{T}(s, \chi_{2})^{6}  L^{T}(s, \chi_{1}\chi_{2})^{7} L^{T}(s, (\psi_{1}/\psi_{1}^{\tau_{1}})\chi_{2})^{7} L^{T}(s, (\psi_{1}/\psi_{1}^{\tau_{1}})\chi_{1}\chi_{2})^{7} L^{T}(s, I_{K_{2}}^{F}(\psi_{2}/\psi_{2}^{\tau_{2}}))^{6} \\
        &\cdot L^{T}(s, I_{K_{2}}^{F}(\psi_{2}/\psi_{2}^{\tau_{2}}) \otimes \chi_{1})^{7} L^{T}(s, I_{K_{2}}^{F}(\psi_{2}/\psi_{2}^{\tau_{2}}) \otimes (\psi_{1}/\psi_{1}^{\tau_{1}}))^{7}  L^{T}(s, I_{K_{2}}^{F}(\psi_{2}/\psi_{2}^{\tau_{2}}) \otimes (\psi_{1}/\psi_{1}^{\tau_{1}})\chi_{1})^{7} .
    \end{align*}

    \item If $I_{K_{2}}^{F}(\nu_{2})$ satisfies property P, we have
    \begin{align}\label{eqn:decomp_1122_dihedral_P_P}
    \begin{aligned}
        &L^{T}(s, \Pi_{1} \times \Pi_{1} \times \Pi_{2} \times \Pi_{2}) \\
        =& \zeta_{F}^{T}(s)^{6} L^{T}(s, \chi_{1})^{4} L^{T}(s, \psi_{1}/\psi_{1}^{\tau_{1}})^{4} L^{T}(s, (\psi_{1}/\psi_{1}^{\tau_{1}})\chi_{1})^{4} L^{T}(s, \chi_{2})^{3} L^{T}(s, \chi_{1}\chi_{2})^{2} \\
        &\cdot L^{T}(s, (\psi_{1}/\psi_{1}^{\tau_{1}})\chi_{2})^{2} L^{T}(s, (\psi_{1}/\psi_{1}^{\tau_{1}})\chi_{1}\chi_{2})^{2} L^{T}(s, I_{K_{2}}^{F}(\psi_{2}/\psi_{2}^{\tau_{2}}))^{6} L^{T}(s, I_{K_{2}}^{F}(\psi_{2}/\psi_{2}^{\tau_{2}}) \otimes \chi_{1})^{4}  \\
        &\cdot L^{T}(s, I_{K_{2}}^{F}(\psi_{2}/\psi_{2}^{\tau_{2}}) \otimes (\psi_{1}/\psi_{1}^{\tau_{1}}))^{4} L^{T}(s, I_{K_{2}}^{F}(\psi_{2}/\psi_{2}^{\tau_{2}}) \otimes (\psi_{1}/\psi_{1}^{\tau_{1}})\chi_{1})^{4} L^{T}(s, \nu_{2}/\nu_{2}^{\tau_{2}})^{3} \\
        &\cdot L^{T}(s, (\nu_{2}/\nu_{2}^{\tau_{2}})\chi_{2})^{3} L^{T}(s, (\nu_{2}/\nu_{2}^{\tau_{2}})\chi_{1})^{2} L^{T}(s, (\nu_{2}/\nu_{2}^{\tau_{2}})\chi_{1}\chi_{2})^{2} L^{T}(s, (\psi_{1}/\psi_{1}^{\tau_{1}})(\nu_{2}/\nu_{2}^{\tau_{2}}))^{2} \\
        &\cdot  L^{T}(s, (\psi_{1}/\psi_{1}^{\tau_{1}})(\nu_{2}/\nu_{2}^{\tau_{2}})\chi_{2})^{2} L^{T}(s, (\psi_{1}/\psi_{1}^{\tau_{1}})(\nu_{2}/\nu_{2}^{\tau_{2}})\chi_{1})^{2} L^{T}(s, (\psi_{1}/\psi_{1}^{\tau_{1}})(\nu_{2}/\nu_{2}^{\tau_{2}})\chi_{1}\chi_{2})^{2} .
    \end{aligned}
    \end{align}
    If $I_{K_{2}}^{F}(\nu_{2})$ does not satisfy property P, we have
    \begin{align*}
        &L^{T}(s, \Pi_{1} \times \Pi_{1} \times \Pi_{2} \times \Pi_{2}) \\
        =& \zeta_{F}^{T}(s)^{6} L^{T}(s, \chi_{1})^{4} L^{T}(s, \psi_{1}/\psi_{1}^{\tau_{1}})^{4} L^{T}(s, (\psi_{1}/\psi_{1}^{\tau_{1}})\chi_{1})^{4} L^{T}(s, \chi_{2})^{3} L^{T}(s, \chi_{1}\chi_{2})^{2} L^{T}(s, (\psi_{1}/\psi_{1}^{\tau_{1}})\chi_{2})^{2} \\
        &\cdot L^{T}(s, (\psi_{1}/\psi_{1}^{\tau_{1}})\chi_{1}\chi_{2})^{2} L^{T}(s, I_{K_{2}}^{F}(\psi_{2}/\psi_{2}^{\tau_{2}}))^{6} L^{T}(s, I_{K_{2}}^{F}(\psi_{2}/\psi_{2}^{\tau_{2}}) \otimes \chi_{1})^{4} \\
        &\cdot L^{T}(s, I_{K_{2}}^{F}(\psi_{2}/\psi_{2}^{\tau_{2}}) \otimes (\psi_{1}/\psi_{1}^{\tau_{1}}))^{4} L^{T}(s, I_{K_{2}}^{F}(\psi_{2}/\psi_{2}^{\tau_{2}}) \otimes (\psi_{1}/\psi_{1}^{\tau_{1}})\chi_{1})^{4} L^{T}(s, I_{K_{2}}^{F}(\nu_{2}/\nu_{2}^{\tau_{2}}))^{3} \\
        &\cdot L^{T}(s, I_{K_{2}}^{F}(\nu_{2}/\nu_{2}^{\tau_{2}}) \otimes \chi_{1})^{2} L^{T}(s, I_{K_{2}}^{F}(\nu_{2}/\nu_{2}^{\tau_{2}}) \otimes (\psi_{1}/\psi_{1}^{\tau_{1}}))^{2} L^{T}(s, I_{K_{2}}^{F}(\nu_{2}/\nu_{2}^{\tau_{2}}) \otimes (\psi_{1}/\psi_{1}^{\tau_{1}})\chi_{1})^{2} .
    \end{align*}

    \item If $I_{K_{2}}^{F}(\nu_{2})$ satisfies property P, we have
    \begin{align}\label{eqn:decomp_1222_dihedral_P_P}
    \begin{aligned}
        &L^{T}(s, \Pi_{1} \times \Pi_{2} \times \Pi_{2} \times \Pi_{2}) \\
        =& L^{T}(s,\chi_{1})^{3} L^{T}(s, \psi_{1}/\psi_{1}^{\tau_{1}})^{3} L^{T}(s,(\psi_{1}/\psi_{1}^{\tau_{1}})\chi_{1})^{3} L^{T}(s, \chi_{1}\chi_{2})^{4} L^{T}(s, (\psi_{1}/\psi_{1}^{\tau_{1}})\chi_{2})^{4}  \\
        &\cdot L^{T}(s, (\psi_{1}/\psi_{1}^{\tau_{1}})\chi_{1}\chi_{2})^{4} L^{T}(s, I_{K_{2}}^{F}(\psi_{2}/\psi_{2}^{\tau_{2}}) \otimes \chi_{1})^{7} L^{T}(s, I_{K_{2}}^{F}(\psi_{2}/\psi_{2}^{\tau_{2}}) \otimes (\psi_{1}/\psi_{1}^{\tau_{1}}))^{7}  \\
        &\cdot L^{T}(s, I_{K_{2}}^{F}(\psi_{2}/\psi_{2}^{\tau_{2}}) \otimes (\psi_{1}/\psi_{1}^{\tau_{1}})\chi_{1})^{7} L^{T}(s, (\nu_{2}/\nu_{2}^{\tau_{2}})\chi_{1})^{3} L^{T}(s, (\psi_{1}/\psi_{1}^{\tau_{1}})(\nu_{2}/\nu_{2}^{\tau_{2}}))^{3}  \\
        & \cdot L^{T}(s, (\psi_{1}/\psi_{1}^{\tau_{1}})(\nu_{2}/\nu_{2}^{\tau_{2}})\chi_{1})^{3} L^{T}(s, (\nu_{2}/\nu_{2}^{\tau_{2}})\chi_{1}\chi_{2})^{3} L^{T}(s, (\psi_{1}/\psi_{1}^{\tau_{1}})(\nu_{2}/\nu_{2}^{\tau_{2}})\chi_{2})^{3} \\
        &\cdot L^{T}(s, (\psi_{1}/\psi_{1}^{\tau_{1}})(\nu_{2}/\nu_{2}^{\tau_{2}})\chi_{1}\chi_{2})^{3} .
    \end{aligned}
    \end{align}
    If $I_{K_{2}}^{F}(\nu_{2})$ does not satisfy property P, we have
    \begin{align*}
        &L^{T}(s, \Pi_{1} \times \Pi_{2} \times \Pi_{2} \times \Pi_{2}) \\
        =& L^{T}(s,\chi_{1})^{3} L^{T}(s, \psi_{1}/\psi_{1}^{\tau_{1}})^{3} L^{T}(s,(\psi_{1}/\psi_{1}^{\tau_{1}})\chi_{1})^{3} L^{T}(s, \chi_{1}\chi_{2})^{4} L^{T}(s, (\psi_{1}/\psi_{1}^{\tau_{1}})\chi_{2})^{4} L^{T}(s, (\psi_{1}/\psi_{1}^{\tau_{1}})\chi_{1}\chi_{2})^{4} \\
        &\cdot  L^{T}(s, I_{K_{2}}^{F}(\psi_{2}/\psi_{2}^{\tau_{2}}) \otimes \chi_{1})^{6} L^{T}(s, I_{K_{2}}^{F}(\psi_{2}/\psi_{2}^{\tau_{2}}) \otimes (\psi_{1}/\psi_{1}^{\tau_{1}}))^{6} L^{T}(s, I_{K_{2}}^{F}(\psi_{2}/\psi_{2}^{\tau_{2}}) \otimes (\psi_{1}/\psi_{1}^{\tau_{1}})\chi_{1})^{6} \\
        &\cdot L^{T}(s, I_{K_{2}}^{F}(\nu_{2}/\nu_{2}^{\tau_{2}}) \otimes \chi_{1})^{3} L^{T}(s, I_{K_{2}}^{F}(\nu_{2}/\nu_{2}^{\tau_{2}}) \otimes (\psi_{1}/\psi_{1}^{\tau_{1}}))^{3} L^{T}(s, I_{K_{2}}^{F}(\nu_{2}/\nu_{2}^{\tau_{2}}) \otimes (\psi_{1}/\psi_{1}^{\tau_{1}})\chi_{1})^{3} \\
        &\cdot  L^{T}(s, I_{K_{2}}^{F}(\nu_{2}^{3}) \otimes \chi_{1}) L^{T}(s, I_{K_{2}}^{F}(\nu_{2}^{3}) \otimes (\psi_{1}/\psi_{1}^{\tau_{1}})) L^{T}(s, I_{K_{2}}^{F}(\nu_{2}^{3}) \otimes (\psi_{1}/\psi_{1}^{\tau_{1}})\chi_{1}) . 
    \end{align*}
    
    \item If $I_{K_{2}}^{F}(\nu_{2})$ satisfies property P, we have
    \begin{align*}
        &L^{T}(s, \Pi_{2} \times \Pi_{2} \times \Pi_{2} \times \Pi_{2}) \\
        =& \zeta_{F}^{T}(s)^{11} L^{T}(s, \chi_{2})^{10} L^{T}(s, I_{K_{2}}^{F}(\psi_{2}/\psi_{2}^{\tau_{2}}))^{20} L^{T}(s, \nu_{2}/\nu_{2}^{\tau_{2}})^{10} L^{T}(s, (\nu_{2}/\nu_{2}^{\tau_{2}})\chi_{2})^{10} .
    \end{align*}
    If $I_{K_{2}}^{F}(\nu_{2})$ does not satisfy property P, we have
    \begin{align*}
        &L^{T}(s, \Pi_{2} \times \Pi_{2} \times \Pi_{2} \times \Pi_{2})\\
        =& \zeta_{F}^{T}(s)^{9} L^{T}(s, \chi_{2})^{8} L^{T}(s, I_{K_{2}}^{F}(\psi_{2}/\psi_{2}^{\tau_{2}}))^{16} L^{T}(s, I_{K_{2}}^{F}(\nu_{2}/\nu_{2}^{\tau_{2}}))^{10} L^{T}(s, I_{K_{2}}^{F}(\nu_{2}^{3}))^{4} \\
        &\cdot L^{T}(s, I_{K_{2}}^{F}(\nu_{2}/\nu_{2}^{\tau_{2}}) \times I_{K_{2}}^{F}(\nu_{2}/\nu_{2}^{\tau_{2}})) .
    \end{align*}
\end{enumerate}
\end{lem}
\begin{proof}
We will establish proofs for selected illustrative cases.

\textbf{Case (vii):}
From the decomposition $\Pi_{2} \simeq \chi_{2} \boxplus I_{K_{2}}^{F}(\psi_{2}/\psi_{2}^{\tau_{2}})$, we obtain
\begin{align*}
    L^{T}(s, \Pi_{2} \times \Pi_{2})
    = \zeta_{F}^{T}(s)^{2} L^{T}(I_{K_{2}}^{F}(\psi_{2}/\psi_{2}^{\tau_{2}}))^{2} L^{T}(\Ad( I_{K_{2}}^{F}(\nu_{2}) ) ).
\end{align*}
Here, we use the fact that
\begin{align}\label{eqn:I(nu)_otimes_chi}
    I_{K_{2}}^{F}(\psi_{2}/\psi_{2}^{\tau_{2}}) \otimes \chi_{2} \simeq I_{K_{2}}^{F}(\psi_{2}/\psi_{2}^{\tau_{2}}) .
\end{align}
To see this, we compare Satake parameters at places $v \not\in T$ for both sides. If $v$ splits in $K_{2}$, then the local twist by $\chi_{2}(v) = 1$ is trivial.
If $v$ is inert in $K_{2}$, then $I_{K_{2}}^{F}(\psi_{2}/\psi_{2}^{\tau_{2}})$ has Satake parameters $\{ 1, -1 \}$ at $v$, which are preserved under twisting by $\chi_{2}(v) = -1$.
In fact, we can further decompose $\Ad(I_{K_{2}}^{F}(\nu_{2}))$ according to Lemma \ref{lem:adjoint_lift_decomposition_for_dihedral} since $I_{K_{2}}^{F}(\nu_{2}) = I_{K_{2}}^{F}(\psi_{2}/\psi_{2}^{\tau_{2}})$ is a dihedral representation. This yields 
\begin{align*}
    \Ad(I_{K_{2}}^{F}(\nu_{2})) \simeq 
    \begin{cases}
        \chi_{2} \boxplus \nu_{2}/\nu_{2}^{\tau_{2}} \boxplus (\nu_{2}/\nu_{2}^{\tau_{2}})\chi_{2} & \text{if $I_{K_{2}}^{F}(\nu_{2})$ satisfies property P,} \\
        \chi_{2} \boxplus I_{K_{2}}^{F}(\nu_{2}/\nu_{2}^{\tau_{2}}) & \text{if $I_{K_{2}}^{F}(\nu_{2})$ does not satisfy property P.}
    \end{cases}
\end{align*}

We now deduce the decomposition for $L^{T}(s, \Pi_{2} \times \Pi_{2} \times \Pi_{2})$. Consider the case where $I_{K_{2}}^{F}(\nu_{2})$ satisfies property P. Then,
\begin{align*}
    &L^{T}(s, \Pi_{2} \times \Pi_{2} \times \Pi_{2}) \\
    =& L^{T}(s, \Pi_{2})^{2} L^{T}(s, \Pi_{2} \otimes \chi_{2}) L^{T}(s, I_{K_{2}}^{F}(\psi_{2}/\psi_{2}^{\tau_{2}}) \times \Pi_{2} )^{2} L^{T}(s, \Pi_{2} \otimes \nu_{2}/\nu_{2}^{\tau_{2}}) L^{T}(s, \Pi_{2} \otimes (\nu_{2}/\nu_{2}^{\tau_{2}})\chi_{2}) \\
    =& L^{T}(s, \chi_{2})^{2} L^{T}(s, I_{K_{2}}^{F}(\psi_{2}/\psi_{2}^{\tau_{2}}))^{2}  \zeta_{F}^{T}(s) L^{T}(s, I_{K_{2}}^{F}(\psi_{2}/\psi_{2}^{\tau_{2}}) \otimes \chi_{2})  L^{T}(s, I_{K_{2}}^{F}(\psi_{2}/\psi_{2}^{\tau_{2}}) \otimes \chi_{2})^{2} \\
    &\cdot L^{T}(s, I_{K_{2}}^{F}(\psi_{2}/\psi_{2}^{\tau_{2}}) \times I_{K_{2}}^{F}(\psi_{2}/\psi_{2}^{\tau_{2}}))^{2}  L^{T}(s, (\nu_{2}/\nu_{2}^{\tau_{2}})\chi_{2})   L^{T}(s, I_{K_{2}}^{F}(\psi_{2}/\psi_{2}^{\tau_{2}}) \otimes (\nu_{2}/\nu_{2}^{\tau_{2}})) L^{T}(s, \nu_{2}/\nu_{2}^{\tau_{2}}) \\
    &\cdot L^{T}(s, I_{K_{2}}^{F}(\psi_{2}/\psi_{2}^{\tau_{2}}) \otimes (\nu_{2}/\nu_{2}^{\tau_{2}})\chi_{2}) \\
    =& \zeta_{F}^{T}(s)^{3} L^{T}(s, \chi_{2})^{4} L^{T}(s, I_{K_{2}}^{F}(\psi_{2}/\psi_{2}^{\tau_{2}}))^{5} L^{T}(s, \nu_{2}/\nu_{2}^{\tau_{2}})^{3} L^{T}(s, (\nu_{2}/\nu_{2}^{\tau_{2}})\chi_{2})^{3} \\
    &\cdot L^{T}(s, I_{K_{2}}^{F}(\psi_{2}/\psi_{2}^{\tau_{2}}) \otimes (\nu_{2}/\nu_{2}^{\tau_{2}})) L^{T}(s, I_{K_{2}}^{F}(\psi_{2}/\psi_{2}^{\tau_{2}}) \otimes (\nu_{2}/\nu_{2}^{\tau_{2}})\chi_{2}) .
\end{align*}
The last equality follows from \eqref{eqn:I(nu)_otimes_chi} and $I_{K_{2}}^{F}(\psi_{2}/\psi_{2}^{\tau_{2}}) \times I_{K_{2}}^{F}(\psi_{2}/\psi_{2}^{\tau_{2}}) \simeq 1 \boxplus \Ad(I_{K_{2}}^{F}(\nu_{2}))$ as shown before, where we can further decompose $\Ad(I_{K_{2}}^{F}(\nu_{2}))$ using Lemma \ref{lem:adjoint_lift_decomposition_for_dihedral}.
The result is immediate if we can show that
\begin{align*}
    I_{K_{2}}^{F}(\psi_{2}/\psi_{2}^{\tau_{2}}) \otimes (\nu_{2}/\nu_{2}^{\tau_{2}}) = I_{K_{2}}^{F}(\psi_{2}/\psi_{2}^{\tau_{2}}) .
\end{align*}
The idea is similar to proving \eqref{eqn:I(nu)_otimes_chi}.
If $p$ is a prime that is inert in $K_{2}$, then the local twist by $(\nu_{2}/\nu_{2}^{\tau_{2}})(p) = 1$ is trivial.
Suppose $p$ is a prime in $F$ that splits into $P_{1}P_{2}$ in $K_{2}$. Let $\alpha = (\psi_{2}/\psi_{2}^{\tau_{2}})(P_{1}) = \psi_{2}(P_{1})/\psi_{2}(P_{2})$. Then, $I_{K_{2}}^{F}(\psi_{2}/\psi_{2}^{\tau_{2}})$ has Satake parameters $\{ \alpha, \alpha^{-1} \}$. Note that $(\nu_{2}/\nu_{2}^{\tau_{2}})(P_{1}) = \alpha^{2}$.
Since $I_{K_{2}}^{F}(\nu_{2})$ satisfies property P, we have $(\nu_{2}/\nu_{2}^{\tau_{2}})^{2} = 1$, in particular, $\alpha^{4} = 1$. 
Note that the character $\nu_{2}/\nu_{2}^{\tau_{2}}$ for $K_{2}$ is invariant under $\Gal(K_{2}/F)$. Hence, $\nu_{2}/\nu_{2}^{\tau_{2}}$ is the base change of some character $\phi$ of $F$.
We see that $\phi(p) = (\nu_{2}/\nu_{2}^{\tau_{2}})(P_{1}) = (\nu_{2}/\nu_{2}^{\tau_{2}})(P_{2}) = \alpha^{2}$.
Now, $I_{K_{2}}^{F}(\psi_{2}/\psi_{2}^{\tau_{2}}) \otimes (\nu_{2}/\nu_{2}^{\tau_{2}})$ will have Satake parameters $\{ \alpha^{3}, \alpha \} = \{ \alpha, \alpha^{-1}\}$ at prime $p$. 
Since $I_{K_{2}}^{F}(\psi_{2}/\psi_{2}^{\tau_{2}}) \otimes (\nu_{2}/\nu_{2}^{\tau_{2}})$ and $I_{K_{2}}^{F}(\psi_{2}/\psi_{2}^{\tau_{2}})$ share the same Satake parameters at all but finitely many primes, the result follows.

We proceed similarly in the case where $I_{K_{2}}^{F}(\nu_{2})$ does not satisfy property P to obtain
\begin{align*}
    &L^{T}(s, \Pi_{2} \times \Pi_{2} \times \Pi_{2}) \\
    =& L^{T}(s, \chi_{2})^{2} L^{T}(s, I_{K_{2}}^{F}(\psi_{2}/\psi_{2}^{\tau_{2}}))^{2}  \zeta_{F}^{T}(s) L^{T}(s, I_{K_{2}}^{F}(\psi_{2}/\psi_{2}^{\tau_{2}}) \otimes \chi_{2})  L^{T}(s, I_{K_{2}}^{F}(\psi_{2}/\psi_{2}^{\tau_{2}}) \otimes \chi_{2})^{2} \\
    &\cdot L^{T}(s, I_{K_{2}}^{F}(\psi_{2}/\psi_{2}^{\tau_{2}}) \times I_{K_{2}}^{F}(\psi_{2}/\psi_{2}^{\tau_{2}}))^{2}  L^{T}(s, I_{K_{2}}^{F}(\nu_{2}/\nu_{2}^{\tau_{2}}) \otimes \chi_{2}) L^{T}(s, I_{K_{2}}^{F}(\nu_{2}/\nu_{2}^{\tau_{2}}) \times I_{K_{2}}^{F}(\psi_{2}/\psi_{2}^{\tau_{2}})) \\
    =& \zeta_{F}^{T}(s)^{3} L^{T}(s, \chi_{2})^{4} L^{T}(s, I_{K_{2}}^{F}(\psi_{2}/\psi_{2}^{\tau_{2}}))^{5} L^{T}(s, I_{K_{2}}^{F}(\nu_{2}/\nu_{2}^{\tau_{2}}))^{3} L^{T}(s, I_{K_{2}}^{F}(\nu_{2}/\nu_{2}^{\tau_{2}}) \times I_{K_{2}}^{F}(\psi_{2}/\psi_{2}^{\tau_{2}})) .
\end{align*}
The result follows if we can show that 
\begin{align}\label{eqn:I(nu^2)_times_I(nu)}
    I_{K_{2}}^{F}(\nu_{2}/\nu_{2}^{\tau_{2}}) \times I_{K_{2}}^{F}(\psi_{2}/\psi_{2}^{\tau_{2}}) \simeq I_{K_{2}}^{F}(\psi_{2}/\psi_{2}^{\tau_{2}}) \boxplus I_{K_{2}}^{F}(\nu_{2}^{3}) .
\end{align}
If $p$ is an inert prime, then all three representations $I_{K_{2}}^{F}(\nu_{2}/\nu_{2}^{\tau_{2}})$, $I_{K_{2}}^{F}(\psi_{2}/\psi_{2}^{\tau_{2}})$ and $I_{K_{2}}^{F}(\nu_{2}^{3})$ have Satake parameters $\{ 1,-1\}$ at $p$. Hence, both sides of \eqref{eqn:I(nu^2)_times_I(nu)} have Satake parameters $\{ 1,1,-1,-1 \}$ at $p$.
If $p$ is a prime in $F$ that splits into $P_{1}P_{2}$ in $K_{2}$, then $I_{K_{2}}^{F}(\nu_{2}/\nu_{2}^{\tau_{2}})$ has Satake parameters $\{ \alpha^{2}, \alpha^{-2} \}$, $I_{K_{2}}^{F}(\psi_{2}/\psi_{2}^{\tau_{2}})$ has Satake parameters $\{ \alpha, \alpha^{-1} \}$ and $I_{K_{2}}^{F}(\nu_{2}^{3})$ has Satake parameters $\{ \alpha^{3}, \alpha^{-3} \}$, where $\alpha = (\psi_{2}/\psi_{2}^{\tau_{2}})(P_{1})$.
Hence, both sides of \eqref{eqn:I(nu^2)_times_I(nu)} have Satake parameters $\{ \alpha^{3},\alpha,\alpha^{-1},\alpha^{-3} \}$ at $p$.
\end{proof}

We have similar results for the case where both $\pi_{1}$ and $\pi_{2}$ do not satisfy property P. The proof follows the same lines as that of Lemma \ref{lem:L-function_decomposition_dihedral_diff_quad_ext_P_and_NP}, which we will omit.

\begin{lem}\label{lem:L-function_decomposition_dihedral_diff_quad_ext_NP_and_NP}
Let $\pi_{1}, \pi_{2} \in \mathcal{A}_{0}(\GL_{2}(\mathbb{A}_{F}))$ be dihedral representations with unitary central characters $\omega_{1}, \omega_{2}$ respectively.
Assume $\pi_{1}$ and $\pi_{2}$ cannot be induced from the same quadratic extension. Assume that both $\pi_{1}$ and $\pi_{2}$ do not satisfy property P.
Let $T$ be the set of all infinite places as well as finite places at which $\pi_{1}$ or $\pi_{2}$ is ramified. 
\begin{enumerate}[(i)]
        
    \item If $I_{K_{2}}^{F}(\nu_{2})$ satisfies property P, we have
    \begin{align*}
        &L^{T}(s, \Pi_{1} \times \Pi_{2} \times \Pi_{2}) \\
        =& L^{T}(s, \chi_{1})^{2} L^{T}(s, \chi_{1}\chi_{2}) L^{T}(s, I_{K_{1}}^{F}(\psi_{1}/\psi_{1}^{\tau_{1}}))^{2} L^{T}(s, I_{K_{1}}^{F}(\psi_{1}/\psi_{1}^{\tau_{1}}) \otimes \chi_{2}) L^{T}(s, I_{K_{2}}^{F}(\psi_{2}/\psi_{2}^{\tau_{2}}) \otimes \chi_{1})^{2} \\
        &\cdot L^{T}(s, I_{K_{1}}^{F}(\psi_{1}/\psi_{1}^{\tau_{1}}) \times I_{K_{2}}^{F}(\psi_{2}/\psi_{2}^{\tau_{2}}))^{2} L^{T}(s, (\nu_{2}/\nu_{2}^{\tau_{2}})\chi_{1}) L^{T}(s, (\nu_{2}/\nu_{2}^{\tau_{2}})\chi_{1}\chi_{2}) \\
        &\cdot L^{T}(s, I_{K_{1}}^{F}(\psi_{1}/\psi_{1}^{\tau_{1}}) \otimes (\nu_{2}/\nu_{2}^{\tau_{2}})) L^{T}(s, I_{K_{1}}^{F}(\psi_{1}/\psi_{1}^{\tau_{1}}) \otimes (\nu_{2}/\nu_{2}^{\tau_{2}})\chi_{2}) .
    \end{align*}
    If $I_{K_{2}}^{F}(\nu_{2})$ does not satisfy property P, we have
    \begin{align*}
        &L^{T}(s, \Pi_{1} \times \Pi_{2} \times \Pi_{2}) \\
        =& L^{T}(s, \chi_{1})^{2} L^{T}(s,\chi_{1}\chi_{2}) L^{T}(s, I_{K_{1}}^{F}(\psi_{1}/\psi_{1}^{\tau_{1}}))^{2} L^{T}(s, I_{K_{1}}^{F}(\psi_{1}/\psi_{1}^{\tau_{1}}) \otimes \chi_{2}) L^{T}(s, I_{K_{2}}^{F}(\psi_{2}/\psi_{2}^{\tau_{2}}) \otimes \chi_{1})^{2} \\
        &\cdot L^{T}(s, I_{K_{1}}^{F}(\psi_{1}/\psi_{1}^{\tau_{1}}) \times I_{K_{2}}^{F}(\psi_{2}/\psi_{2}^{\tau_{2}}))^{2} L^{T}(s, I_{K_{2}}^{F}(\nu_{2}/\nu_{2}^{\tau_{2}}) \otimes \chi_{1}) L^{T}(s, I_{K_{1}}^{F}(\psi_{1}/\psi_{1}^{\tau_{1}}) \times I_{K_{2}}^{F}(\nu_{2}/\nu_{2}^{\tau_{2}})) .
    \end{align*}

    \item If $I_{K_{1}}^{F}(\nu_{1})$ satisfies property P and $I_{K_{2}}^{F}(\nu_{2})$ satisfies property P, we have
    \begin{align*}
        &L^{T}(s, \Pi_{1} \times \Pi_{1} \times \Pi_{2} \times \Pi_{2}) \\
        =& \zeta_{F}^{T}(s)^{4} L^{T}(s, \chi_{1})^{2} L^{T}(s, \chi_{2})^{2} L^{T}(s, \chi_{1}\chi_{2}) L^{T}(s, I_{K_{1}}^{F}(\psi_{1}/\psi_{1}^{\tau_{1}}))^{4} L^{T}(s, I_{K_{2}}^{F}(\psi_{2}/\psi_{2}^{\tau_{2}}))^{4} \\
        &\cdot L^{T}(s, I_{K_{1}}^{F}(\psi_{1}/\psi_{1}^{\tau_{1}}) \otimes \chi_{2})^{2} L^{T}(s, I_{K_{2}}^{F}(\psi_{2}/\psi_{2}^{\tau_{2}}) \otimes \chi_{1})^{2} L^{T}(s, I_{K_{1}}^{F}(\psi_{1}/\psi_{1}^{\tau_{1}}) \times I_{K_{2}}^{F}(\psi_{2}/\psi_{2}^{\tau_{2}}))^{4} \\
        &\cdot L^{T}(s, \nu_{1}/\nu_{1}^{\tau_{1}})^{2} L^{T}(s, (\nu_{1}/\nu_{1}^{\tau_{1}}) \chi_{1})^{2} L^{T}(s, \nu_{2}/\nu_{2}^{\tau_{2}})^{2} L^{T}(s, (\nu_{2}/\nu_{2}^{\tau_{2}}) \chi_{2})^{2} L^{T}(s, (\nu_{1}/\nu_{1}^{\tau_{1}})\chi_{2})  \\
        &\cdot L^{T}(s, (\nu_{1}/\nu_{1}^{\tau_{1}})\chi_{1}\chi_{2}) L^{T}(s, (\nu_{2}/\nu_{2}^{\tau_{2}})\chi_{1}) L^{T}(s, (\nu_{2}/\nu_{2}^{\tau_{2}})\chi_{1}\chi_{2}) L^{T}(s, (\nu_{1}/\nu_{1}^{\tau_{1}})(\nu_{2}/\nu_{2}^{\tau_{2}}))  \\
        &\cdot L^{T}(s, (\nu_{1}/\nu_{1}^{\tau_{1}})(\nu_{2}/\nu_{2}^{\tau_{2}})\chi_{1}) L^{T}(s, (\nu_{1}/\nu_{1}^{\tau_{1}})(\nu_{2}/\nu_{2}^{\tau_{2}})\chi_{2}) L^{T}(s, (\nu_{1}/\nu_{1}^{\tau_{1}})(\nu_{2}/\nu_{2}^{\tau_{2}})\chi_{1}\chi_{2}) \\
        &\cdot L^{T}(s, I_{K_{1}}^{F}(\psi_{1}/\psi_{1}^{\tau_{1}}) \otimes (\nu_{2}/\nu_{2}^{\tau_{2}}))^{2} L^{T}(s, I_{K_{1}}^{F}(\psi_{1}/\psi_{1}^{\tau_{1}}) \otimes (\nu_{2}/\nu_{2}^{\tau_{2}})\chi_{2})^{2} \\
        &\cdot L^{T}(s, I_{K_{2}}^{F}(\psi_{2}/\psi_{2}^{\tau_{2}}) \otimes (\nu_{1}/\nu_{1}^{\tau_{1}}))^{2} L^{T}(s, I_{K_{2}}^{F}(\psi_{2}/\psi_{2}^{\tau_{2}}) \otimes (\nu_{1}/\nu_{1}^{\tau_{1}})\chi_{1})^{2} .
    \end{align*}
    If $I_{K_{1}}^{F}(\nu_{1})$ satisfies property P and $I_{K_{2}}^{F}(\nu_{2})$ does not satisfy property P, we have
    \begin{align*}
        &L^{T}(s, \Pi_{1} \times \Pi_{1} \times \Pi_{2} \times \Pi_{2}) \\
        =& \zeta_{F}^{T}(s)^{4} L^{T}(s, \chi_{1})^{2} L^{T}(s, \chi_{2})^{2} L^{T}(s, \chi_{1}\chi_{2}) L^{T}(s, I_{K_{1}}^{F}(\psi_{1}/\psi_{1}^{\tau_{1}}))^{4} L^{T}(s, I_{K_{2}}^{F}(\psi_{2}/\psi_{2}^{\tau_{2}}))^{4} \\
        &\cdot L^{T}(s, I_{K_{1}}^{F}(\psi_{1}/\psi_{1}^{\tau_{1}}) \otimes \chi_{2})^{2} L^{T}(s, I_{K_{2}}^{F}(\psi_{2}/\psi_{2}^{\tau_{2}}) \otimes \chi_{1})^{2} L^{T}(s, I_{K_{1}}^{F}(\psi_{1}/\psi_{1}^{\tau_{1}}) \times I_{K_{2}}^{F}(\psi_{2}/\psi_{2}^{\tau_{2}}))^{4} \\
        &\cdot L^{T}(s, \nu_{1}/\nu_{1}^{\tau_{1}})^{2} L^{T}(s, (\nu_{1}/\nu_{1}^{\tau_{1}}) \chi_{1})^{2} L^{T}(s, I_{K_{2}}^{F}(\nu_{2}/\nu_{2}^{\tau_{2}}))^{2} L^{T}(s, (\nu_{1}/\nu_{1}^{\tau_{1}})\chi_{2}) \\
        &\cdot L^{T}(s, (\nu_{1}/\nu_{1}^{\tau_{1}})\chi_{1}\chi_{2}) L^{T}(s, I_{K_{2}}^{F}(\nu_{2}/\nu_{2}^{\tau_{2}}) \otimes \chi_{1}) L^{T}(s, I_{K_{2}}^{F}(\nu_{2}/\nu_{2}^{\tau_{2}}) \otimes (\nu_{1}/\nu_{1}^{\tau_{1}})) \\
        &\cdot L^{T}(s, I_{K_{2}}^{F}(\nu_{2}/\nu_{2}^{\tau_{2}}) \otimes (\nu_{1}/\nu_{1}^{\tau_{1}})\chi_{1}) L^{T}(s, I_{K_{1}}^{F}(\psi_{1}/\psi_{1}^{\tau_{1}}) \times I_{K_{2}}^{F}(\nu_{2}/\nu_{2}^{\tau_{2}}))^{2}  \\
        &\cdot L^{T}(s, I_{K_{2}}^{F}(\psi_{2}/\psi_{2}^{\tau_{2}}) \otimes (\nu_{1}/\nu_{1}^{\tau_{1}}))^{2} L^{T}(s, I_{K_{2}}^{F}(\psi_{2}/\psi_{2}^{\tau_{2}}) \otimes (\nu_{1}/\nu_{1}^{\tau_{1}})\chi_{1})^{2} .
    \end{align*}
    If $I_{K_{1}}^{F}(\nu_{1})$ does not satisfy property P and $I_{K_{2}}^{F}(\nu_{2})$ does not satisfy property P, we have
    \begin{align*}
        &L^{T}(s, \Pi_{1} \times \Pi_{1} \times \Pi_{2} \times \Pi_{2}) \\
        =& \zeta_{F}^{T}(s)^{4} L^{T}(s, \chi_{1})^{2} L^{T}(s, \chi_{2})^{2} L^{T}(s, \chi_{1}\chi_{2}) L^{T}(s, I_{K_{1}}^{F}(\psi_{1}/\psi_{1}^{\tau_{1}}))^{4} L^{T}(s, I_{K_{2}}^{F}(\psi_{2}/\psi_{2}^{\tau_{2}}))^{4} \\
        &\cdot L^{T}(s, I_{K_{1}}^{F}(\psi_{1}/\psi_{1}^{\tau_{1}}) \otimes \chi_{2})^{2} L^{T}(s, I_{K_{2}}^{F}(\psi_{2}/\psi_{2}^{\tau_{2}}) \otimes \chi_{1})^{2} L^{T}(s, I_{K_{1}}^{F}(\psi_{1}/\psi_{1}^{\tau_{1}}) \times I_{K_{2}}^{F}(\psi_{2}/\psi_{2}^{\tau_{2}}))^{4} \\
        &\cdot L^{T}(s, I_{K_{1}}^{F}(\nu_{1}/\nu_{1}^{\tau_{1}}))^{2} L^{T}(s, I_{K_{2}}^{F}(\nu_{2}/\nu_{2}^{\tau_{2}}))^{2} L^{T}(s, I_{K_{1}}^{F}(\nu_{1}/\nu_{1}^{\tau_{1}}) \otimes \chi_{2}) L^{T}(s, I_{K_{2}}^{F}(\nu_{2}/\nu_{2}^{\tau_{2}}) \otimes \chi_{1}) \\
        &\cdot L^{T}(s, I_{K_{1}}^{F}(\nu_{1}/\nu_{1}^{\tau_{1}}) \times I_{K_{2}}^{F}(\nu_{2}/\nu_{2}^{\tau_{2}})) L^{T}(s, I_{K_{1}}^{F}(\psi_{1}/\psi_{1}^{\tau_{1}}) \times I_{K_{2}}^{F}(\nu_{2}/\nu_{2}^{\tau_{2}}))^{2} \\
        &\cdot L^{T}(s, I_{K_{1}}^{F}(\nu_{1}/\nu_{1}^{\tau_{1}}) \times I_{K_{2}}^{F}(\psi_{2}/\psi_{2}^{\tau_{2}}))^{2} .
    \end{align*}
    
    \item If $I_{K_{2}}^{F}(\nu_{2})$ satisfies property P, we have
    \begin{align*}
        &L^{T}(s, \Pi_{1} \times \Pi_{2} \times \Pi_{2} \times \Pi_{2}) \\
        =& L^{T}(s, \chi_{1})^{3} L^{T}(s, \chi_{1}\chi_{2})^{4} L^{T}(s, I_{K_{1}}^{F}(\psi_{1}/\psi_{1}^{\tau_{1}}))^{3} L^{T}(s, I_{K_{1}}^{F}(\psi_{1}/\psi_{1}^{\tau_{1}}) \otimes \chi_{2})^{4} L^{T}(s, I_{K_{2}}^{F}(\psi_{2}/\psi_{2}^{\tau_{2}}) \otimes \chi_{1})^{7} \\
        &\cdot L^{T}(s, I_{K_{1}}^{F}(\psi_{1}/\psi_{1}^{\tau_{1}}) \times I_{K_{2}}^{F}(\psi_{2}/\psi_{2}^{\tau_{2}}))^{7} L^{T}(s, (\nu_{2}/\nu_{2}^{\tau_{2}})\chi_{1})^{3} L^{T}(s, (\nu_{2}/\nu_{2}^{\tau_{2}})\chi_{1}\chi_{2})^{3} \\
        &\cdot L^{T}(s, I_{K_{1}}^{F}(\psi_{1}/\psi_{1}^{\tau_{1}}) \otimes (\nu_{2}/\nu_{2}^{\tau_{2}}))^{3} L^{T}(s, I_{K_{1}}^{F}(\psi_{1}/\psi_{1}^{\tau_{1}}) \otimes (\nu_{2}/\nu_{2}^{\tau_{2}})\chi_{2})^{3} .
    \end{align*}
    If $I_{K_{2}}^{F}(\nu_{2})$ does not satisfy property P, we have
    \begin{align}\label{eqn:decomp_1222_dihedral_NP_NP}
    \begin{aligned}
        &L^{T}(s, \Pi_{1} \times \Pi_{2} \times \Pi_{2} \times \Pi_{2}) \\
        =& L^{T}(s, \chi_{1})^{3} L^{T}(s, \chi_{1}\chi_{2})^{4} L^{T}(s, I_{K_{1}}^{F}(\psi_{1}/\psi_{1}^{\tau_{1}}))^{3} L^{T}(s, I_{K_{1}}^{F}(\psi_{1}/\psi_{1}^{\tau_{1}}) \otimes \chi_{2})^{4} \\
        &\cdot L^{T}(s, I_{K_{2}}^{F}(\psi_{2}/\psi_{2}^{\tau_{2}}) \otimes \chi_{1})^{6} L^{T}(s, I_{K_{1}}^{F}(\psi_{1}/\psi_{1}^{\tau_{1}}) \times I_{K_{2}}^{F}(\psi_{2}/\psi_{2}^{\tau_{2}}))^{6} L^{T}(s, I_{K_{2}}^{F}(\nu_{2}/\nu_{2}^{\tau_{2}}) \otimes \chi_{1})^{3} \\
        &\cdot L^{T}(s, I_{K_{1}}^{F}(\psi_{1}/\psi_{1}^{\tau_{1}}) \times I_{K_{2}}^{F}(\nu_{2}/\nu_{2}^{\tau_{2}}) )^{3} L^{T}(s, I_{K_{2}}^{F}(\nu_{2}^{3}) \otimes \chi_{1}) L^{T}(s, I_{K_{1}}^{F}(\psi_{1}/\psi_{1}^{\tau_{1}}) \times I_{K_{2}}^{F}(\nu_{2}^{3}) ) .
    \end{aligned}
    \end{align}
\end{enumerate}
\end{lem}

\begin{proof}[Proof of Lemma~\ref{lem:order_of_L_function_both_dihedral_pi2_NP_diff_K}]
To illustrate, we present some interesting cases as examples.

\textbf{Case (iii) with $\pi_{1}$ not satisfying property P:} 
In order to apply Lemma \ref{lem:L-function_decomposition_dihedral_diff_quad_ext_P_and_NP}, we note that the current case is equivalent to computing $-\ord L^{T}(s, \Pi_{2} \times \Pi_{2} \times \Pi_{2})$, where $\pi_{2}$ does not satisfy property P. 
There are three subcases: when $I_{K_{2}}^{F}(\nu_{2})$ satisfies property P, when $\pi_{2}$ satisfies property Q, and when $\pi_{2}$ satisfies property R.
If $I_{K_{2}}^{F}(\nu_{2})$ satisfies property P, then from \eqref{eqn:decomp_222_dihedral_P}, the $L$-function $L^{T}(s, \Pi_{2} \times \Pi_{2} \times \Pi_{2})$ has a pole of order 3 at $s=1$.
Consider the case where $I_{K_{2}}^{F}(\nu_{2})$ does not satisfy property P. $L^{T}(s, \Pi_{2} \times \Pi_{2} \times \Pi_{2})$ has a pole of order at least $3$ due to the term $\zeta_{F}^{T}(s)^{3}$ on the right-hand side of \eqref{eqn:decomp_222_dihedral_NP}. 
If $\pi_{2}$ satisfies property Q, then $L^{T}(s, I_{K_{2}}^{F}(\nu_{2}^{3}))$ has a simple pole at $s=1$ by definition. In contrast, if $\pi_{2}$ satisfies property R, then $L^{T}(s, I_{K_{2}}^{F}(\nu_{2}^{3}))$ remains holomorphic.

\textbf{Case (v) where both $\pi_{1}$ and $I_{K_{2}}^{F}(\nu_{2})$ satisfy property P:} 
We refer to equation \eqref{eqn:decomp_122_dihedral} in Lemma \ref{lem:L-function_decomposition_dihedral_diff_quad_ext_P_and_NP}.
We claim that every $L$-function on the right-hand side of \eqref{eqn:decomp_122_dihedral} is holomorphic at $s=1$. This is because they are all $L$-functions associated to non-trivial cuspidal representations. It remains to show that $ (\psi_{1}/\psi_{1}^{\tau_{1}})(\nu_{2}/\nu_{2}^{\tau_{2}})$ is a non-trivial character.
Observe that both $\psi_{1}/\psi_{1}^{\tau_{1}}$ and $\nu_{2}/\nu_{2}^{\tau_{2}}$ are non-trivial quadratic characters.
Suppose, for the sake of contradiction, that $ (\psi_{1}/\psi_{1}^{\tau_{1}})(\nu_{2}/\nu_{2}^{\tau_{2}})$ is trivial, then $\psi_{1}/\psi_{1}^{\tau_{1}} = \nu_{2}/\nu_{2}^{\tau_{2}}$. 
Let $v \not\in T$ be a finite place of $F$. By abusing notation, we also use $v$ to denote the prime of $F$ corresponding to the place $v$.
If $v$ is inert in $K_{1}$, then $(\psi_{1}/\psi_{1}^{\tau_{1}})(v) = 1$. If $v$ is inert in $K_{2}$, then $(\psi_{1}/\psi_{1}^{\tau_{1}})(v) = (\nu_{2}/\nu_{2}^{\tau_{2}})(v) = 1$.
Hence, $\psi_{1}/\psi_{1}^{\tau_{1}}$ takes the value $1$ at primes $v$ that are inert in $K_{1}$ or $K_{2}$. The set of such primes has density at least $\frac{3}{4}$.
However, any non-trivial quadratic character takes the value $1$ at primes with density $\frac{1}{2}$. This leads to a contradiction.

\textbf{Case (ix) where $\pi_{1}$ does not satisfy property P and $\pi_{2}$ satisfies property Q:}
Note that $I_{K_{2}}^{F}(\nu_{2})$ does not satisfy property P.
We refer to equation \eqref{eqn:decomp_1222_dihedral_NP_NP} in Lemma \ref{lem:L-function_decomposition_dihedral_diff_quad_ext_NP_and_NP}. 
We claim that every $L$-function on the right-hand side of \eqref{eqn:decomp_1222_dihedral_NP_NP} is holomorphic at $s=1$.
We argue that 
\begin{align*}
    I_{K_{1}}^{F}(\psi_{1}/\psi_{1}^{\tau_{1}}) \not\simeq I_{K_{2}}^{F}(\psi_{2}/\psi_{2}^{\tau_{2}}) ,
\end{align*}
which implies that $L^{T}(s, I_{K_{1}}^{F}(\psi_{1}/\psi_{1}^{\tau_{1}}) \times I_{K_{2}}^{F}(\psi_{2}/\psi_{2}^{\tau_{2}}))$ is holomorphic at $s=1$.
Since $K_{1}$ and $K_{2}$ are distinct, there exist places in $F$ of density $\frac{1}{4}$ that split in $K_{1}$ but remain inert in $K_{2}$. At these places, the Satake parameters of $I_{K_{1}}^{F}(\psi_{1}/\psi_{1}^{\tau_{1}})$ are $\{ \alpha_{v}, \alpha_{v}^{-1} \}$, whereas those of $I_{K_{2}}^{F}(\psi_{2}/\psi_{2}^{\tau_{2}})$ are $\{ 1,-1 \}$. Since these two parameter sets are not equal as multi-sets, the result follows.
A similar argument shows that $L^{T}(s, I_{K_{1}}^{F}(\psi_{1}/\psi_{1}^{\tau_{1}}) \times I_{K_{2}}^{F}(\nu_{2}/\nu_{2}^{\tau_{2}}))$ and $L^{T}(s, I_{K_{1}}^{F}(\psi_{1}/\psi_{1}^{\tau_{1}}) \times I_{K_{2}}^{F}(\nu_{2}^{3}))$ in \eqref{eqn:decomp_1222_dihedral_NP_NP} are holomorphic at $s=1$.
By the assumption that $\pi_{2}$ satisfies property Q, we have $I_{K_{2}}^{F}(\nu_{2}^{3}) \simeq 1 \boxplus \chi_{2}$. It follows that $L^{T}(s, I_{K_{2}}^{F}(\nu_{2}^{3}) \otimes \chi_{1})$ is holomorphic at $s=1$.
\end{proof}

We present a more detailed version of Theorem \ref{thm:main_theorem} for dihedral $\pi_{1}$ and $\pi_{2}$.
\begin{thm}\label{thm:main_theorem_both_dihedral}
Let $\pi_{1}, \pi_{2} \in \mathcal{A}_{0}(\GL_{2}(\mathbb{A}_{F}))$ be dihedral representations with unitary central characters. Assume that $\pi_{1}$ and $\pi_{2}$ are not twist-equivalent.
\begin{enumerate}[(i)]
    \item If $\pi_{1}$ satisfies property P and $\pi_{2}$ satisfies property P, then
    \begin{align*}
        \underline{\delta}(S_{\ast}^{>}(\pi_{1}, \pi_{2})) \geq
        \begin{cases}
            \frac{1}{8} & \text{if } \pi_{1} \text{ and } \pi_{2} \text{ can be induced from the same } K, \\
            \frac{3}{16} & \text{if } \pi_{1} \text{ and } \pi_{2} \text{ cannot be induced from the same } K.
        \end{cases}
    \end{align*}

    \item If $\pi_{1}$ satisfies property P and $\pi_{2}$ does not satisfy property P, then
    \begin{align*}
        \underline{\delta}(S_{\ast}^{>}(\pi_{1}, \pi_{2})) \geq
        \begin{cases}
            \frac{1}{8} & \text{if } \pi_{1} \text{ and } \pi_{2} \text{ can be induced from the same } K, \\
            \frac{9}{44} & \text{if } \pi_{1} \text{ and } \pi_{2} \text{ cannot be induced from the same } K.
        \end{cases}
    \end{align*}

    \item If $\pi_{1}$ does not satisfy property P and $\pi_{2}$ satisfies property P, then
    \begin{align*}
        \underline{\delta}(S_{\ast}^{>}(\pi_{1}, \pi_{2})) \geq
        \begin{cases}
            \frac{1}{16} & \text{if } \pi_{1} \text{ and } \pi_{2} \text{ can be induced from the same } K, \\
            \frac{1}{8} & \text{if } \pi_{1} \text{ and } \pi_{2} \text{ cannot be induced from the same } K.
        \end{cases}
    \end{align*}

    \item If $\pi_{1}$ does not satisfy property P and $\pi_{2}$ does not satisfy property P, then
    \begin{align*}
        \underline{\delta}(S_{\ast}^{>}(\pi_{1}, \pi_{2})) \geq
        \begin{cases}
            \frac{1}{16} & \text{if } \pi_{1} \text{ and } \pi_{2} \text{ can be induced from the same } K, \\
            \frac{2}{15} & \text{if } \pi_{1} \text{ and } \pi_{2} \text{ cannot be induced from the same } K.
        \end{cases}
    \end{align*}
\end{enumerate}
\end{thm}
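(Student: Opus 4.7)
The plan is to adapt the Cauchy-Schwarz strategy of Theorem~\ref{thm:main_theorem_both_non_dihedral} to the dihedral setting. Dihedral cuspidal representations are tempered, so Ramanujan gives $A_v + 1 = \lvert a_v(\pi_1)\rvert^2 \geq 0$ at every unramified $v$, where $A_v = a_v(\Ad(\pi_1))$ and $B_v = a_v(\Ad(\pi_2))$. Hence $(A_v - B_v)(A_v + 1)$ is non-positive outside $S_{\ast}^{>} := S_{\ast}^{>}(\pi_1, \pi_2)$, so
\begin{align*}
\sum_v \frac{(A_v - B_v)(A_v + 1)}{Nv^s}
\leq \sum_v \frac{(A_v - B_v)(A_v + 1) C(v)}{Nv^s},
\end{align*}
and Cauchy-Schwarz bounds the right-hand side by $\bigl(\sum_v (A_v - B_v)^2 (A_v + 1)^2 Nv^{-s}\bigr)^{1/2} \bigl(\sum_{v \in S_{\ast}^{>}} Nv^{-s}\bigr)^{1/2}$. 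Expanding and letting $s \to 1^+$ reduces the problem to computing $-\ord_{s=1} L^T(s, \Pi_1^{\times i} \times \Pi_2^{\times j})$ for all $0 \leq i+j \leq 4$.

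For the four sub-cases where $\pi_1$ and $\pi_2$ are induced from different quadratic extensions, Lemma~\ref{lem:order_of_L_function_both_dihedral_pi2_NP_diff_K} gives the pole orders when $\pi_2$ does not satisfy property P, and I would establish three analogous lemmas for the remaining property-type combinations of $\pi_2$. Each analog follows the same template: expand $\Pi_1^{\times i} \times \Pi_2^{\times j}$ via Lemma~\ref{lem:adjoint_lift_decomposition_for_dihedral} and Clebsch-Gordan into an isobaric sum of Hecke characters and dihedral representations, then apply the density argument used in the proof of Lemma~\ref{lem:order_of_L_function_both_dihedral_pi2_NP_diff_K}, which shows that when $K_1 \neq K_2$, products such as $(\psi_1/\psi_1^{\tau_1})(\nu_2/\nu_2^{\tau_2})$ must be non-trivial characters, and Rankin-Selberg factors such as $L^T(s, I_{K_1}^F(\psi_1/\psi_1^{\tau_1}) \times I_{K_2}^F(\psi_2/\psi_2^{\tau_2}))$ are holomorphic at $s=1$. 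Substituting the worst-case pole orders (among the property Q/R refinements) into the Cauchy-Schwarz bound yields the densities $3/16$, $9/44$, $1/8$, and $2/15$ claimed in the theorem.

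The four ``same $K$'' sub-cases use the same scheme, but the density argument is unavailable and $L^T(s, \Pi_1 \times \Pi_2)$ acquires a pole at $s=1$ from the shared quadratic character $\chi = \chi_{K/F}$. In these cases I would decompose each $\Pi_1^{\times i} \times \Pi_2^{\times j}$ explicitly using Lemma~\ref{lem:adjoint_lift_decomposition_for_dihedral} and track which of the resulting characters of $K$ of the form $\psi_1^a (\psi_1^\tau)^b \psi_2^c (\psi_2^\tau)^d$ descend to trivial characters of $F$. The non-twist-equivalence hypothesis, which forces $\Ad(\pi_1) \not\simeq \Ad(\pi_2)$, rules out the most dangerous coincidences. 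The main obstacle is the bookkeeping: the same-$K$ case is particularly delicate because the shared $\chi$ produces additional poles both in the left-hand side coefficient $m_{11} - m_{12}$ and in the quadruple-product term, and the resulting cancellations must be tracked carefully in each of the four property-type sub-cases to land on the sharp constants $1/8$ and $1/16$ rather than weaker values.
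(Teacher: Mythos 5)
The crucial tool you are missing is the \emph{pointwise} Ramanujan bound $(A_v - B_v)(A_v + 1) \le 16$, which the paper uses as its inequality~\eqref{eqn:fourth_inequality_Cauchy_Ramanujan_Conj} to handle all four same-$K$ subcases as well as Cases~(i) and~(iii). You instead propose to run the Cauchy--Schwarz inequality~\eqref{eqn:first_inequality_Cauchy} uniformly, but this is strictly weaker in precisely the subcases that are tight, and it does not recover the constant $\tfrac{1}{16}$. Concretely, take Case~(iii) with $\pi_1$ not satisfying P, $I_{K}^{F}(\nu_1)$ satisfying P, $\pi_2$ satisfying P, both induced from the same $K$. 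The numerator is $-\ord L^T(\Pi_1\times\Pi_1) - (-\ord L^T(\Pi_1\times\Pi_2)) = 2 - 1 = 1$. Writing $\mu$ for the descent of $\nu_1^2$ to $F$, one finds generically (i.e.\ when $\mu\notin\{\nu_2,\nu_2\chi\}$)
\begin{align*}
-\ord L^T(\Pi_1^{\times 4}) &= 11, \quad -\ord L^T(\Pi_1^{\times 3}\times\Pi_2) = 4, \quad -\ord L^T(\Pi_1^{\times 2}\times\Pi_2^{\times 2}) = 8, \\
-\ord L^T(\Pi_1^{\times 3}) &= 3, \quad -\ord L^T(\Pi_1^{\times 2}\times\Pi_2) = 1, \quad -\ord L^T(\Pi_1\times\Pi_2^{\times 2}) = 2, \\
-\ord L^T(\Pi_1^{\times 2}) &= 2, \quad -\ord L^T(\Pi_1\times\Pi_2) = 1, \quad -\ord L^T(\Pi_2^{\times 2}) = 3,
\end{align*}
so the expansion $A^4 - 2A^3B + A^2B^2 + 2A^3 - 4A^2B + 2AB^2 + A^2 - 2AB + B^2$ in the Cauchy--Schwarz denominator evaluates to $11 - 8 + 8 + 6 - 4 + 4 + 2 - 2 + 3 = 20$, and inequality~\eqref{eqn:first_inequality_Cauchy} gives only $\underline{\delta}(S_{\ast}^{>}) \ge 1^2/20 = 1/20$, which is worse than the claimed $\tfrac{1}{16}$. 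The pointwise bound, by contrast, gives $1 \le 16\,\underline{\delta}(S_{\ast}^{>})$ directly from the linear asymptotic, which is exactly $\tfrac{1}{16}$.

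There is also a secondary issue: by routing everything through quadruple Rankin--Selberg products you would need to determine $-\ord_{s=1} L^T(s, \Pi_1^{\times i}\times\Pi_2^{\times j})$ for $i+j=4$ in all eight property-type $\times$ same-/different-$K$ subcases, including their degenerate refinements (e.g.\ when $\Ad(I_K^F(\nu_1))\simeq\Ad(\pi_2)$, or when $\nu_2$ coincides with $\chi_1$ in the different-$K$ situation), and you have not carried any of these out. The paper deliberately avoids this by only needing the double products for Cases~(i), (iii), and the same-$K$ subcases of (ii) and (iv), reserving the quadruple-product Cauchy--Schwarz argument for the two different-$K$, $\pi_2$-not-P subcases where it does give the sharper constants $\tfrac{9}{44}$ and $\tfrac{2}{15}$.

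Your starting inequality $\sum_v (A_v-B_v)(A_v+1)Nv^{-s} \le \sum_v (A_v-B_v)(A_v+1)C(v)Nv^{-s}$ and its expansion are correct, and your use of Lemmas~\ref{lem:adjoint_lift_decomposition_for_dihedral} and~\ref{lem:order_of_L_function_both_dihedral_pi2_NP_diff_K} for the different-$K$, $\pi_2$-not-P cases is sound. But to complete the argument you must, as the paper does, replace the Cauchy--Schwarz step by the pointwise Ramanujan bound $(A_v - B_v)(A_v + 1) \le 16$ in the remaining cases; that inequality, not~\eqref{eqn:first_inequality_Cauchy}, is what yields the constants $\tfrac{1}{16}$, $\tfrac{1}{8}$, and $\tfrac{3}{16}$.
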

\begin{proof}
Let $C = C_{S_{\ast}^{>}}$ be the characteristic function of $S_{\ast}^{>} := S_{\ast}^{>}(\pi_{1}, \pi_{2})$.
The proof is similar to the proof of Theorem \ref{thm:main_theorem_both_non_dihedral}.
We consider the following inequality by Cauchy-Schwarz, 
\begin{align}\label{eqn:fourth_inequality_Cauchy_Ramanujan_Conj}
    \sum_{v} \frac{(A_v - B_v)(A_v + 1)}{Nv^{s}} \leq \sum_{v} \frac{(A_v - B_v)(A_v + 1) C(v)}{Nv^{s}} \leq 16 \sum_{v \in S_{\ast}^{>}} \frac{1}{Nv^{s}} 
\end{align}
where we have applied the Ramanujan-Petersson conjecture for dihedral representations. In particular, $A_{v} - B_{v} = \abs{a_{v}}^{2} - \abs{b_{v}}^{2} \leq 4$. 
Similar to the proof of Theorem \ref{thm:main_theorem_both_non_dihedral}, we divide appropriate inequalities by $\log (\frac{1}{s-1})$ and take the limit inferior as $s \to 1^{+}$.
Case (i) and Case (iii) are direct applications of inequality \eqref{eqn:fourth_inequality_Cauchy_Ramanujan_Conj}.
In Case (ii) and Case (iv), if $\pi_{1}$ and $\pi_{2}$ can be induced from the same quadratic extension $K$, we apply inequality \eqref{eqn:fourth_inequality_Cauchy_Ramanujan_Conj}.
For Case (ii) where $\pi_{1}$ and $\pi_{2}$ cannot be induced from the same quadratic extension, we divide \eqref{eqn:first_inequality_Cauchy} by $\log (\frac{1}{s-1})$ and take the limit inferior as $s \to 1^{+}$. This operation yields
\begin{align*}
    3 \leq (21 - 0 + 6 + 12 - 0 + 0 + 3 -0 + 2)^{\frac{1}{2}} \underline{\delta}(S_{\ast}^{>})^{\frac{1}{2}} ,
\end{align*}
where all values are determined by Lemma \ref{lem:order_of_L_function_both_dihedral_pi2_NP_diff_K}. This gives
\begin{equation*}
    \underline{\delta}(S_{\ast}^{>}) \geq \frac{9}{44} \geq \frac{1}{4.889} .
\end{equation*}

It remains to consider Case (iv) in which $\pi_{1}$ and $\pi_{2}$ cannot be induced from the same quadratic extension. We have to divide \eqref{eqn:first_inequality_Cauchy} by $\log (\frac{1}{s-1})$ and take the limit inferior as $s \to 1^{+}$. 
Let us illustrate with the subcase where $I_{K_{2}}^{F}(\nu_{2})$ does not satisfy property P.
From Case (vi) in Lemma \ref{lem:order_of_L_function_both_dihedral_pi2_NP_diff_K}, we have to further divide into three subcases: when $I_{K_{1}}^{F}(\nu_{1})$ satisfies property P, when $\pi_{1}$ satisfies property Q, and when $\pi_{1}$ satisfies property R.
When $I_{K_{1}}^{F}(\nu_{1})$ satisfies property P, this operation yields
\begin{align*}
    2 \leq (11 - 2\cdot 0 + 4 + 2\cdot 3 - 4\cdot 0 + 2\cdot 0 + 2 - 2\cdot 0 + 2 )^{\frac{1}{2}} \underline{\delta}(S_{\ast}^{>})^{\frac{1}{2}} ,
\end{align*}
where all values are determined by Lemma \ref{lem:order_of_L_function_both_dihedral_pi2_NP_diff_K}. This gives
\begin{equation*}
    \underline{\delta}(S_{\ast}^{>}) \geq \frac{4}{25} = \frac{1}{6.25} .
\end{equation*}
When $\pi_{1}$ satisfies property Q, the same operation yields
\begin{align*}
    2 \leq (14 - 2\cdot 0 + 4 + 2\cdot 4 - 4\cdot 0 + 2\cdot 0 + 2 - 2\cdot 0 + 2 )^{\frac{1}{2}} \underline{\delta}(S_{\ast}^{>})^{\frac{1}{2}} ,
\end{align*}
which gives
\begin{equation*}
    \underline{\delta}(S_{\ast}^{>}) \geq \frac{2}{15} = \frac{1}{7.5} .
\end{equation*}
When $\pi_{1}$ satisfies property R, the same operation yields
\begin{align*}
    2 \leq (10 - 2\cdot 0 + 4 + 2\cdot 3 - 4\cdot 0 + 2\cdot 0 + 2 - 2\cdot 0 + 2 )^{\frac{1}{2}} \underline{\delta}(S_{\ast}^{>})^{\frac{1}{2}} ,
\end{align*}
which gives
\begin{equation*}
    \underline{\delta}(S_{\ast}^{>}) \geq \frac{1}{6} .
\end{equation*}
Hence, the lower bound $\frac{2}{15}$ holds for all subcases.
\end{proof}

We move on to Theorem \ref{thm:improvements_in_Wong} in our context.
Before doing so, we need to understand the asymptotic behavior of certain $L$-functions at $s=1$ in cases where both $\pi_{1}$ and $\pi_{2}$ can be induced from the same quadratic extension $K := K_{1} = K_{2}$. In this case, we have $\tau := \tau_{1} = \tau_{2}$ and $\chi := \chi_{1} = \chi_{2}$.
\begin{lem}\label{lem:order_of_L_function_both_dihedral_pi2_NP_same_K}
Let $\pi_{1}, \pi_{2} \in \mathcal{A}_{0}(\GL_{2}(\mathbb{A}_{F}))$ be non-twist-equivalent dihedral representations with unitary central characters.
Further assume that $\pi_{1}$ and $\pi_{2}$ can be induced from the same quadratic extension and that $\pi_{2}$ does not satisfy property P.
Let $T$ be the set of all infinite places as well as finite places at which $\pi_{1}$ or $\pi_{2}$ is ramified. 
Then
\begin{enumerate}[(i)]
    \item 
    \begin{align*}
        -\ord_{s=1}L^{T}(s,\Pi_{1} \times \Pi_{1}) = 
        \begin{cases}
            3 & \text{if } \pi_{1} \text{ satisfies property P,} \\
            2 & \text{if } \pi_{1} \text{ does not satisfy property P.}
        \end{cases}
    \end{align*}

    \item 
    \begin{align*}
        -\ord_{s=1}L^{T}(s,\Pi_{1} \times \Pi_{2}) = 1 .
    \end{align*}
    
    \item 
    \begin{align*}
        -\ord_{s=1}L^{T}(s,\Pi_{1} \times \Pi_{1} \times \Pi_{1} \times \Pi_{2}) =
        \begin{cases}
            7 & \text{if } \pi_{1} \text{ satisfies property P,} \\
            4 & \parbox{0.4\textwidth}{if $\pi_{1}$ does not satisfy property P and $I_{K_{1}}^{F}(\nu_{1})$ satisfies property P,} \\
            5 & \text{if } \pi_{1} \text{ satisfies property Q,} \\
            4,5,7 \text{ or } 8 & \text{if } \pi_{1} \text{ satisfies property R.} 
        \end{cases}
    \end{align*}
    
    \item 
    \begin{align*}
        &-\ord_{s=1} L^{T}(s,\Pi_{1} \times \Pi_{1} \times \Pi_{2} \times \Pi_{2}) \\
        =& 
        \begin{cases}
            8 \text{ or } 12 & \text{if } \pi_{1} \text{ satisfies property P and } I_{K_{2}}^{F}(\nu_{2}) \text{ satisfies property P,} \\
            8 & \text{if } \pi_{1} \text{ satisfies property P and } I_{K_{2}}^{F}(\nu_{2}) \text{ does not satisfy property P,} \\
            5 \text{ or } 7  & \parbox{0.7\textwidth}{if $\pi_{1}$ does not satisfy property P and both $I_{K_{1}}^{F}(\nu_{1})$ and $I_{K_{2}}^{F}(\nu_{2})$ satisfy property P,} \\
            5 \text{ or } 7 & \parbox{0.7\textwidth}{if $\pi_{1}$ does not satisfy property P and exactly one of $I_{K_{1}}^{F}(\nu_{1})$ and $I_{K_{2}}^{F}(\nu_{2})$ satisfies property P,} \\
            5,6,7,8,9 \text{ or } 10 & \parbox{0.7\textwidth}{if $\pi_{1}$ does not satisfy property P and neither $I_{K_{1}}^{F}(\nu_{1})$ nor $I_{K_{2}}^{F}(\nu_{2})$ satisfies property P.}
        \end{cases}
    \end{align*}

    \item 
    \begin{align*}
        &-\ord_{s=1}L^{T}(s,\Pi_{1} \times \Pi_{2} \times \Pi_{2} \times \Pi_{2}) \\
        =&
        \begin{cases}
            4 \text{ or } 10 & \text{if } \pi_{1} \text{ satisfies property P and } I_{K_{2}}^{F}(\nu_{2}) \text{ satisfies property P,} \\
            4, 5 \text{ or } 6 & \text{if } \pi_{1} \text{ satisfies property P and } \pi_{2} \text{ satisfies property Q,} \\
            4 \text{ or } 6 & \text{if } \pi_{1} \text{ satisfies property P and } \pi_{2} \text{ satisfies property R,} \\
            4 & \text{if } \pi_{1} \text{ does not satisfy property P and } I_{K_{2}}^{F}(\nu_{2}) \text{ satisfies property P,} \\
            5 & \text{if } \pi_{1} \text{ does not satisfy property P and } \pi_{2} \text{ satisfies property Q,} \\
            4,5,7 \text{ or } 8 & \text{if } \pi_{1} \text{ does not satisfy property P and } \pi_{2} \text{ satisfies property R.}
        \end{cases}
    \end{align*}
\end{enumerate}
\end{lem}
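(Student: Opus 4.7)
The strategy follows the template of Lemmas~3.1 and~4.2: we express $\Pi_i = \Ad(\pi_i)$ via Lemma~2.2, then iteratively apply the Clebsch--Gordan decomposition together with the identity
\[
I_K^F(\eta) \times I_K^F(\eta') \simeq I_K^F(\eta \eta') \boxplus I_K^F(\eta\, {\eta'}^{\tau}),
\]
and the standard twisting formulas, to reduce each multiple Rankin--Selberg product to an isobaric sum of $L$-functions for Hecke characters of $F$ and for $I_K^F(\xi)$ for various characters $\xi$ of $K$. The order of the pole at $s=1$ is then the number of factors that either correspond to the trivial Hecke character of $F$ or to an automorphic induction $I_K^F(\xi)$ with $\xi = \xi^{\tau}$ (equivalently, $\xi$ factors through $N_{K/F}$, so that the induction becomes an isobaric sum of two characters of $F$, one of which is trivial).

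The phenomenon that distinguishes this lemma from Lemma~4.2 is that $K_1 = K_2 = K$ forces $\chi_1 = \chi_2 = \chi$ and aligns the two Galois actions, producing systematic coincidences that raise the pole orders. Part~(i) involves only $\pi_1$ and reproduces the earlier computation. Part~(ii) acquires a pole from the trivial factor $\chi_1 \chi_2 = 1$ in the isobaric expansion of $\Pi_1 \times \Pi_2$. For part~(iii), we first expand $\Pi_1 \times \Pi_1 \times \Pi_1$ as in Lemma~4.2(iii), then tensor with $\Pi_2 \simeq \chi \boxplus I_K^F(\nu_2)$; the subcases indexed by property~P/Q/R on $\pi_1$ track which resulting characters of $F$ are trivial and which of the induced pieces $I_K^F(\nu_1^{a}\nu_2^{b})$ are self-conjugate under $\tau$. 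For part~(iv), we pair the two $\Pi_1$ factors with the two $\Pi_2$ factors after expansion; the several possible orders reflect independent conditions such as whether $\nu_1 = \nu_2$ up to a twist by $\chi$, or whether $\nu_1 \nu_2$ (or $\nu_1 \nu_2^{\tau}$) is trivial on $K^{\times}$.

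The main obstacle is part~(v), where $\Pi_2 \times \Pi_2 \times \Pi_2$ expands, via $I_K^F(\nu_2) \times I_K^F(\nu_2) \simeq 1 \boxplus \chi \boxplus I_K^F(\nu_2^2)$ and further iterations, into many summands, each of which can coincide with, or be the contragredient of, a factor coming from $\Pi_1$. The wide ranges such as ``$4$ or $10$'' arise when $I_K^F(\nu_2)$ itself satisfies property~P, forcing $\Ad(I_K^F(\nu_2))$ to split into three characters of $F$ and allowing many new coincidences among $\nu_2, \nu_2^2, \nu_2^3, \chi, \nu_1$, and their twists. To handle this systematically, I would enumerate the subcases by whether $I_K^F(\nu_1)$ and $I_K^F(\nu_2)$ satisfy property~P (or Q/R) and, within each subcase, tabulate the character identifications consistent with the assumption that $\pi_1$ and $\pi_2$ are not twist-equivalent (so $\Ad(\pi_1) \not\simeq \Ad(\pi_2)$, hence $\nu_1 \neq \nu_2$ and $\nu_1 \neq \nu_2 \chi$). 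The remaining work is bookkeeping: matching pairs in each isobaric expansion, recording a pole exactly when a matched character of $F$ is trivial or a matched induction is self-conjugate under $\tau$, and summing the contributions to obtain the listed orders.
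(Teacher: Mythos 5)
Your overall strategy matches the paper's: the proof is a reduction to the case of Lemma~\ref{lem:order_of_L_function_both_dihedral_pi2_NP_diff_K}, noting that $K_1 = K_2 = K$ forces $\chi_1 = \chi_2 = \chi$ and $\tau_1 = \tau_2 = \tau$, followed by a Clebsch--Gordan/adjoint-lift expansion into a product of Hecke $L$-functions of characters of $F$ and of automorphic inductions $I_K^F(\xi)$, and then bookkeeping of which factors have a pole at $s=1$. You also correctly note the role of the constraints $\nu_1 \neq \nu_2$ and $\nu_1 \neq \nu_2^{\tau}$ coming from $\Ad(\pi_1) \not\simeq \Ad(\pi_2)$ (your condition ``$\nu_1 \neq \nu_2\chi$'' is not meaningful since $\chi\circ N_{K/F}$ is trivial on $\mathbb{A}_K^\times$; you want $\nu_1 \neq \nu_2^{\tau}$, equivalently $\nu_1\nu_2 \neq 1$).

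However, there is a genuine error in your stated rule for counting pole orders. You claim that $L(s, I_K^F(\xi))$ contributes a pole whenever $\xi = \xi^{\tau}$, on the grounds that $\xi$ then factors through $N_{K/F}$ and ``the induction becomes an isobaric sum of two characters of $F$, one of which is trivial.'' This is false: if $\xi = \mu\circ N_{K/F}$ then $I_K^F(\xi) \simeq \mu \boxplus \mu\chi$, and neither summand is trivial unless $\mu \in \{1,\chi\}$, i.e.\ unless $\xi = 1$. The correct criterion is that $L(s,I_K^F(\xi)) = L_K(s,\xi)$ has a pole at $s=1$ iff $\xi$ is the trivial character of $K$. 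Since all characters appearing in these expansions are of the form $\xi = \nu_1^a\nu_2^b$ with $\xi^\tau = \xi^{-1}$, your criterion $\xi = \xi^\tau$ amounts to $\xi^2=1$, which is strictly weaker than $\xi=1$ and would systematically overcount poles (e.g.\ already in part~(ii), a nontrivial quadratic $\nu_1\nu_2$ would be wrongly counted). The substantive content of the lemma --- which of the inner Rankin--Selberg factors such as $L^T(s, I_K^F(\psi_1/\psi_1^\tau)\times I_K^F(\nu_2/\nu_2^\tau))$ are actually holomorphic --- requires a direct argument (the paper uses Satake-parameter comparisons, exploiting e.g.\ $I_K^F(\nu_2^3)\simeq 1\boxplus\chi$ in the property-Q case), and your proposal does not supply the correct criterion to make that determination.
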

\begin{proof}
The proof is essentially the same as in Lemma \ref{lem:order_of_L_function_both_dihedral_pi2_NP_diff_K}. 
We note that $\chi_{1} = \chi_{2} = \chi$ and $\tau_{1} = \tau_{2} = \tau$, which affects the calculations of the order of poles of some $L$-functions at $s=1$. We will establish proofs for selected illustrative cases.

\textbf{Case (iv) where both $\pi_{1}$ and $I_{K}^{F}(\nu_{2})$ satisfy property P:}
Referring to the decomposition in \eqref{eqn:decomp_1122_dihedral_P_P}, we replace $\chi_{i}$ with $\chi$ and $\tau_{i}$ with $\tau$ to obtain the refined expression
\begin{align}\label{eqn:decomp_1122_P_P_same_K}
\begin{aligned}
    &L^{T}(s, \Pi_{1} \times \Pi_{1} \times \Pi_{2} \times \Pi_{2}) \\
    =& \zeta_{F}^{T}(s)^{6} L^{T}(s, \chi)^{4} L^{T}(s, \psi_{1}/\psi_{1}^{\tau})^{4} L^{T}(s, (\psi_{1}/\psi_{1}^{\tau})\chi)^{4} L^{T}(s, \chi)^{3} L^{T}(s, \chi^{2})^{2} \\
    &\cdot L^{T}(s, (\psi_{1}/\psi_{1}^{\tau})\chi)^{2} L^{T}(s, (\psi_{1}/\psi_{1}^{\tau})\chi^{2})^{2} L^{T}(s, I_{K}^{F}(\psi_{2}/\psi_{2}^{\tau}))^{6} L^{T}(s, I_{K}^{F}(\psi_{2}/\psi_{2}^{\tau}) \otimes \chi)^{4}  \\
    &\cdot L^{T}(s, I_{K}^{F}(\psi_{2}/\psi_{2}^{\tau}) \otimes (\psi_{1}/\psi_{1}^{\tau}))^{4} L^{T}(s, I_{K}^{F}(\psi_{2}/\psi_{2}^{\tau}) \otimes (\psi_{1}/\psi_{1}^{\tau})\chi)^{4} L^{T}(s, \nu_{2}/\nu_{2}^{\tau})^{3} \\
    &\cdot L^{T}(s, (\nu_{2}/\nu_{2}^{\tau})\chi)^{3} L^{T}(s, (\nu_{2}/\nu_{2}^{\tau})\chi)^{2} L^{T}(s, (\nu_{2}/\nu_{2}^{\tau})\chi^{2})^{2} L^{T}(s, (\psi_{1}/\psi_{1}^{\tau})(\nu_{2}/\nu_{2}^{\tau}))^{2} \\
    &\cdot L^{T}(s, (\psi_{1}/\psi_{1}^{\tau})(\nu_{2}/\nu_{2}^{\tau})\chi)^{2} L^{T}(s, (\psi_{1}/\psi_{1}^{\tau})(\nu_{2}/\nu_{2}^{\tau})\chi)^{2} L^{T}(s, (\psi_{1}/\psi_{1}^{\tau})(\nu_{2}/\nu_{2}^{\tau})\chi^{2})^{2} \\
    =& \zeta_{F}^{T}(s)^{8} L^{T}(s,\chi)^{7} L^{T}(s, \psi_{1}/\psi_{1}^{\tau})^{6} L^{T}(s,(\psi_{1}/\psi_{1}^{\tau})\chi)^{6} L^{T}(s, I_{K}^{F}(\psi_{2}/\psi_{2}^{\tau}))^{10}  \\
    &\cdot L^{T}(s, I_{K}^{F}(\psi_{2}/\psi_{2}^{\tau}) \otimes (\psi_{1}/\psi_{1}^{\tau}))^{8} L^{T}(s, \nu_{2}/\nu_{2}^{\tau})^{5} L^{T}(s, (\nu_{2}/\nu_{2}^{\tau})\chi)^{5} L^{T}(s, (\psi_{1}/\psi_{1}^{\tau})(\nu_{2}/\nu_{2}^{\tau}))^{4} \\
    &\cdot L^{T}(s, (\psi_{1}/\psi_{1}^{\tau})(\nu_{2}/\nu_{2}^{\tau})\chi)^{4} .
\end{aligned}
\end{align}
Here, we note that $\chi^{2} \simeq 1$ and $I_{K}^{F}(\psi_{i}/\psi_{i}^{\tau}) \otimes \chi \simeq I_{K}^{F}(\psi_{i}/\psi_{i}^{\tau})$ for $i=1,2$.
Every $L$-function on the right-hand side of \eqref{eqn:decomp_1122_P_P_same_K}, other than $\zeta_{F}^{T}(s)$ and $L^{T}(s, (\psi_{1}/\psi_{1}^{\tau})(\nu_{2}/\nu_{2}^{\tau}))$, is holomorphic at $s=1$. Hence, $L^{T}(s, \Pi_{1} \times \Pi_{1} \times \Pi_{2} \times \Pi_{2})$ has a pole of order 12 at $s=1$ if $\psi_{1}/\psi_{1}^{\tau} \simeq \nu_{2}/\nu_{2}^{\tau}$; otherwise, it has a pole of order 8 at $s=1$.

\textbf{Case (v) where both $\pi_{1}$ and $I_{K}^{F}(\nu_{2})$ satisfy property P:}
Referring to the decomposition in \eqref{eqn:decomp_1222_dihedral_P_P}, we replace $\chi_{i}$ with $\chi$ and $\tau_{i}$ with $\tau$ to obtain the refined expression
\begin{align}\label{eqn:decomp_1222_P_P_same_K}
\begin{aligned}
    &L^{T}(s, \Pi_{1} \times \Pi_{2} \times \Pi_{2} \times \Pi_{2}) \\
    =& L^{T}(s,\chi)^{3} L^{T}(s, \psi_{1}/\psi_{1}^{\tau})^{3} L^{T}(s,(\psi_{1}/\psi_{1}^{\tau})\chi)^{3} L^{T}(s, \chi^{2})^{4} L^{T}(s, (\psi_{1}/\psi_{1}^{\tau})\chi)^{4} L^{T}(s, (\psi_{1}/\psi_{1}^{\tau})\chi^{2})^{4} \\
    &\cdot L^{T}(s, I_{K}^{F}(\psi_{2}/\psi_{2}^{\tau}) \otimes \chi)^{7} L^{T}(s, I_{K}^{F}(\psi_{2}/\psi_{2}^{\tau}) \otimes (\psi_{1}/\psi_{1}^{\tau}))^{7} L^{T}(s, I_{K}^{F}(\psi_{2}/\psi_{2}^{\tau}) \otimes (\psi_{1}/\psi_{1}^{\tau})\chi)^{7} \\
    &\cdot L^{T}(s, (\nu_{2}/\nu_{2}^{\tau})\chi)^{3} L^{T}(s, (\psi_{1}/\psi_{1}^{\tau})(\nu_{2}/\nu_{2}^{\tau}))^{3} L^{T}(s, (\psi_{1}/\psi_{1}^{\tau})(\nu_{2}/\nu_{2}^{\tau})\chi)^{3} L^{T}(s, (\nu_{2}/\nu_{2}^{\tau})\chi^{2})^{3} \\
    &\cdot L^{T}(s, (\psi_{1}/\psi_{1}^{\tau})(\nu_{2}/\nu_{2}^{\tau})\chi)^{3} L^{T}(s, (\psi_{1}/\psi_{1}^{\tau})(\nu_{2}/\nu_{2}^{\tau})\chi^{2})^{3} \\
    =& \zeta_{F}^{T}(s)^{4} L^{T}(s,\chi)^{3} L^{T}(s, \psi_{1}/\psi_{1}^{\tau})^{7} L^{T}(s,(\psi_{1}/\psi_{1}^{\tau})\chi)^{7} L^{T}(s, I_{K}^{F}(\psi_{2}/\psi_{2}^{\tau}))^{7}  \\
    &\cdot L^{T}(s, I_{K}^{F}(\psi_{2}/\psi_{2}^{\tau}) \otimes (\psi_{1}/\psi_{1}^{\tau}))^{14} L^{T}(s, \nu_{2}/\nu_{2}^{\tau})^{3} L^{T}(s, (\nu_{2}/\nu_{2}^{\tau})\chi)^{3} L^{T}(s, (\psi_{1}/\psi_{1}^{\tau})(\nu_{2}/\nu_{2}^{\tau}))^{6} \\
    &\cdot L^{T}(s, (\psi_{1}/\psi_{1}^{\tau})(\nu_{2}/\nu_{2}^{\tau})\chi)^{6} .
\end{aligned}
\end{align}
Here, we note that $\chi^{2} \simeq 1$ and $I_{K}^{F}(\psi_{i}/\psi_{i}^{\tau}) \otimes \chi \simeq I_{K}^{F}(\psi_{i}/\psi_{i}^{\tau})$ for $i=1,2$.
Every $L$-function on the right-hand side of \eqref{eqn:decomp_1222_P_P_same_K}, other than $\zeta_{F}^{T}(s)$ and $L^{T}(s, (\psi_{1}/\psi_{1}^{\tau})(\nu_{2}/\nu_{2}^{\tau}))$, is holomorphic at $s=1$. Hence, $L^{T}(s, \Pi_{1} \times \Pi_{2} \times \Pi_{2} \times \Pi_{2})$ has a pole of order $10$ at $s=1$ if $\psi_{1}/\psi_{1}^{\tau} \simeq \nu_{2}/\nu_{2}^{\tau}$; otherwise, it has a pole of order $4$ at $s=1$.

\textbf{Case (v) where both $\pi_{1}$ and $I_{K}^{F}(\nu_{2})$ do not satisfy property P:}
Referring to the decomposition in \eqref{eqn:decomp_1222_dihedral_NP_NP}, we replace $\chi_{i}$ with $\chi$ and $\tau_{i}$ with $\tau$ to obtain the refined expression
\begin{align*}
    &L^{T}(s, \Pi_{1} \times \Pi_{2} \times \Pi_{2} \times \Pi_{2}) \\
    =& \zeta_{F}^{T}(s)^{4} L^{T}(s, \chi)^{3} L^{T}(s, I_{K}^{F}(\psi_{1}/\psi_{1}^{\tau}))^{7} L^{T}(s, I_{K}^{F}(\psi_{2}/\psi_{2}^{\tau}))^{6} L^{T}(s, I_{K}^{F}(\psi_{1}/\psi_{1}^{\tau}) \times I_{K}^{F}(\psi_{2}/\psi_{2}^{\tau}))^{6} \\
    &\cdot L^{T}(s, I_{K}^{F}(\nu_{2}/\nu_{2}^{\tau}))^{3} L^{T}(s, I_{K}^{F}(\psi_{1}/\psi_{1}^{\tau}) \times I_{K}^{F}(\nu_{2}/\nu_{2}^{\tau}))^{3} L^{T}(s, I_{K}^{F}(\nu_{2}^{3})) L^{T}(s, I_{K}^{F}(\psi_{1}/\psi_{1}^{\tau}) \times I_{K}^{F}(\nu_{2}^{3})) .
\end{align*}
Here, we note that $\chi^{2} \simeq 1$ and $I_{K}^{F}(\psi_{i}/\psi_{i}^{\tau}) \otimes \chi \simeq I_{K}^{F}(\psi_{i}/\psi_{i}^{\tau})$ for $i=1,2$.
Consider the subcase where $\pi_{2}$ satisfies property Q. Since both $\zeta_{F}^{T}(s)$ and $L^{T}(s, I_{K}^{F}(\nu_{2}^{3}))$ have a simple pole at $s=1$, it suffices to show that all other $L$-functions are holomorphic at $s=1$. Since $L^{T}(s, I_{K}^{F}(\nu_{2}^{3}))$ has a simple pole at $s=1$, we have $I_{K}^{F}(\nu_{2}^{3}) \simeq 1 \boxplus \chi$. It follows that $L^{T}(s, I_{K}^{F}(\psi_{1}/\psi_{1}^{\tau}) \times I_{K}^{F}(\nu_{2}^{3}))$ is holomorphic at $s=1$.
It remains to show that $L^{T}(s, I_{K}^{F}(\psi_{1}/\psi_{1}^{\tau}) \times I_{K}^{F}(\nu_{2}/\nu_{2}^{\tau}))$ is holomorphic at $s=1$. 
Since $I_{K}^{F}(\nu_{2}^{3})$ has Satake parameters $\{ 1,1 \}$ at a place $v$ of $F$ that splits in $K$, it follows that $I_{K}^{F}(\nu_{2})$ has Satake parameters either $\{ 1,1\}$ or $\{ e^{2\pi i/3}, e^{-2\pi i/3} \}$ at $v$. Consequently, $I_{K}^{F}(\nu_{2}^{2}) \simeq I_{K}^{F}(\nu_{2}/\nu_{2}^{\tau})$ shares the same Satake parameters as $I_{K}^{F}(\nu_{2})$ at $v$, and hence $I_{K}^{F}(\psi_{2}/\psi_{2}^{\tau}) \simeq I_{K}^{F}(\nu_{2}/\nu_{2}^{\tau})$. 
Therefore, $I_{K}^{F}(\psi_{1}/\psi_{1}^{\tau}) \simeq I_{K}^{F}(\nu_{2}/\nu_{2}^{\tau})$ would imply $I_{K}^{F}(\psi_{1}/\psi_{1}^{\tau}) \simeq I_{K}^{F}(\psi_{2}/\psi_{2}^{\tau})$, which contradicts the fact that $\pi_{1}$ and $\pi_{2}$ are not twist-equivalent.

Consider the subcase where $\pi_{2}$ satisfies property R. Then, $\zeta_{F}^{T}(s)$ has a simple pole at $s=1$, while $L^{T}(s, I_{K}^{F}(\nu_{2}^{3}))$ is holomorphic at $s=1$.
However, we cannot conclude whether $L^{T}(s, I_{K}^{F}(\psi_{1}/\psi_{1}^{\tau}) \times I_{K}^{F}(\nu_{2}/\nu_{2}^{\tau}))$ has a pole at $s=1$, nor whether $L^{T}(s, I_{K}^{F}(\psi_{1}/\psi_{1}^{\tau}) \times I_{K}^{F}(\nu_{2}^{3}))$ does. 
In this subcase, $L^{T}(s, \Pi_{1} \times \Pi_{2} \times \Pi_{2} \times \Pi_{2})$ will have a pole of order 4, 5, 7, or 8 at $s=1$.
\end{proof}

\begin{thm}\label{thm:improvements_in_Wong_both_dihedral}
Let $\pi_{1}, \pi_{2} \in \mathcal{A}_{0}(\GL_{2}(\mathbb{A}_{F}))$ be dihedral representations with unitary central characters. Assume that $\pi_{1}$ and $\pi_{2}$ are not twist-equivalent.
\begin{enumerate}[(i)]
    \item \cite[Section 3]{Wong_Refinements_Strong_Multiplcity_One_2022} If $\pi_{1}$ satisfies property P and $\pi_{2}$ satisfies property P, then
    \begin{align*}
        \underline{\delta}(S_{\ast}(\pi_{1},\pi_{2})) \geq 
        \begin{cases}
            \frac{1}{4} & \text{if } \pi_{1} \text{ and } \pi_{2} \text{ can be induced from the same } K, \\
            \frac{3}{8} & \text{if } \pi_{1} \text{ and } \pi_{2} \text{ cannot be induced from the same } K.
        \end{cases}
    \end{align*}

    \item If $\pi_{1}$ satisfies property P and $\pi_{2}$ does not satisfy property P, then
    \begin{align*}
        \underline{\delta}(S_{\ast}(\pi_{1},\pi_{2})) \geq 
        \begin{cases}
            \frac{1}{4} & \text{if } \pi_{1} \text{ and } \pi_{2} \text{ can be induced from the same } K \mbox{ \cite{Wong_Refinements_Strong_Multiplcity_One_2022}}, \\
            \frac{25}{71} & \text{if } \pi_{1} \text{ and } \pi_{2} \text{ cannot be induced from the same } K.
        \end{cases}
    \end{align*}

    \item If $\pi_{1}$ does not satisfy property P and $\pi_{2}$ does not satisfy property P, then
    \begin{align*}
        \underline{\delta}(S_{\ast}(\pi_{1},\pi_{2})) \geq 
        \begin{cases}
            \frac{1}{8} & \text{if } \pi_{1} \text{ and } \pi_{2} \text{ can be induced from the same } K \mbox{ \cite{Wong_Refinements_Strong_Multiplcity_One_2022}}, \\
            \frac{4}{13} & \text{if } \pi_{1} \text{ and } \pi_{2} \text{ cannot be induced from the same } K.
        \end{cases}
    \end{align*}
\end{enumerate}
\end{thm}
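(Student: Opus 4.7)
The strategy is to mirror the proof of Theorem \ref{thm:improvements_in_Wong_both_non_dihedral}, applying the Cauchy-Schwarz inequality \eqref{eqn:third_inequality_Cauchy} with $A_v = a_v(\Ad(\pi_1))$, $B_v = a_v(\Ad(\pi_2))$, and $C = C_{S_{\ast}}$. All subcases of (i) together with the same-$K$ subcases of (ii) and (iii) are already covered by Wong \cite{Wong_Refinements_Strong_Multiplcity_One_2022}, so the task reduces to establishing the two diff-$K$ bounds $\tfrac{25}{71}$ in (ii) and $\tfrac{4}{13}$ in (iii) using the pole-order data from Lemma \ref{lem:order_of_L_function_both_dihedral_pi2_NP_diff_K}.

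For case (ii) in the diff-$K$ setting, parts (i) and (ii) of Lemma \ref{lem:order_of_L_function_both_dihedral_pi2_NP_diff_K} (and part (i) applied with $\pi_1 \leftrightarrow \pi_2$ for the $\pi_2$-only quantity) give pole orders $3$, $0$, $2$ for $L^T(s,\Pi_1\times\Pi_1)$, $L^T(s,\Pi_1\times\Pi_2)$, $L^T(s,\Pi_2\times\Pi_2)$, so the left-hand side of \eqref{eqn:third_inequality_Cauchy} contributes $3-2(0)+2 = 5$ to the coefficient of $\log\tfrac{1}{s-1}$. For the quadruple factor on the right I would expand $(A_v-B_v)^4$ and set
\begin{align*}
N_4 := n_4(1111) - 4\,n_4(1112) + 6\,n_4(1122) - 4\,n_4(1222) + n_4(2222),
\end{align*}
where $n_4(ijkl) = -\ord_{s=1} L^T(s,\Pi_i\times\Pi_j\times\Pi_k\times\Pi_l)$. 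Parts (vi)--(ix) of Lemma \ref{lem:order_of_L_function_both_dihedral_pi2_NP_diff_K} yield $n_4(1111)=21$, $n_4(1112)=0$, $n_4(1122)=6$, $n_4(1222)\in\{0,1\}$, and part (vi) applied to $\pi_2$ gives $n_4(2222)\in\{10,11,14\}$. Maximizing over admissible subcases (worst case: $\pi_2$ satisfies property Q with $n_4(1222)=0$, $n_4(2222)=14$) yields $N_4 \leq 21 + 36 + 14 = 71$, so \eqref{eqn:third_inequality_Cauchy} forces $25 \leq 71\,\underline{\delta}(S_{\ast})$, i.e.\ $\underline{\delta}(S_{\ast}(\pi_1,\pi_2)) \geq \tfrac{25}{71}$.

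Case (iii) diff $K$ proceeds identically: the left-hand contribution becomes $2-0+2=4$, and because both $\pi_i$ fail property P one has $n_4(1122)=4$ from part (viii), $n_4(1111),\,n_4(2222)\in\{10,11,14\}$ from part (vi) applied to each, and $n_4(1112),\,n_4(1222)\geq 0$ from parts (vii) and (ix). The worst case ($n_4(1111)=n_4(2222)=14$, $n_4(1112)=n_4(1222)=0$) gives $N_4 \leq 14 + 14 + 24 = 52$, producing $\underline{\delta}(S_{\ast}) \geq \tfrac{16}{52} = \tfrac{4}{13}$. The main technical point is that Lemma \ref{lem:order_of_L_function_both_dihedral_pi2_NP_diff_K} is stated under the asymmetric hypothesis that $\pi_2$ does not satisfy property P, but the pole-order formulas depending only on $\pi_2$ (such as $n_4(2222)$) are intrinsic to $\pi_2$ and can be read off by substituting $\pi_2$'s own property-type into the formula given for $\pi_1$; ensuring that this role-swap is legitimate in each subcase, and bookkeeping the worst value of $N_4$ uniformly, is the main obstacle to clean.
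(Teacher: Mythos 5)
Your proposal is correct and follows essentially the same route as the paper: apply the Cauchy--Schwarz inequality \eqref{eqn:third_inequality_Cauchy} to $(A_v-B_v)^2$, read the pole orders off Lemma~\ref{lem:order_of_L_function_both_dihedral_pi2_NP_diff_K} (using the $\Pi_2^{\times 4}$ entry by specializing the $\Pi_1^{\times 4}$ formula to $\pi_2$'s property type, which is legitimate since that order is intrinsic to $\pi_2$), and take the worst admissible value of the quadruple coefficient $N_4$ to get $\frac{25}{71}$ and $\frac{4}{13}$ in the two diff-$K$ subcases. The paper's proof is terser and chooses to elaborate the same-$K$ subcase of~(ii) instead of the diff-$K$ ones, but the inequality it invokes for the diff-$K$ subcases is exactly \eqref{eqn:third_inequality_Cauchy}, and your explicit $N_4$ bookkeeping ($21-0+36-0+14=71$ with $c^2=25$, and $14-0+24-0+14=52$ with $c^2=16$) matches the data in the lemma.
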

\begin{remark}
Our result in (ii) and (iii), where $\pi_{1}$ and $\pi_{2}$ cannot be induced from the same $K$, is an improvement of Wong's result \cite[Section 3]{Wong_Refinements_Strong_Multiplcity_One_2022}.
\end{remark}
\begin{proof}
Let $C = C_{S_{\ast}}$ be the characteristic function of $S_{\ast} := S_{\ast}(\pi_{1}, \pi_{2})$.
Similarly, we consider the following inequality 
\begin{align}\label{eqn:fifth_inequality_Cauchy_Ramanujan_Conj}
    \sum_{v} \frac{\abs{A_v - B_v}^{2}}{Nv^{s}} = \sum_{v} \frac{\abs{A_v - B_v}^{2} C(v)}{Nv^{s}} \leq 16 \sum_{v \in S_{\ast}} \frac{1}{Nv^{s}} ,
\end{align}
where we have applied the Ramanujan-Petersson conjecture for dihedral representations. 

The proof follows the same line as before, where we divide appropriate inequalities by $\log (\frac{1}{s-1})$ and take the limit inferior as $s \to 1^{+}$. We briefly discuss which inequality yields the optimal result.
For Case (i), we apply \eqref{eqn:fifth_inequality_Cauchy_Ramanujan_Conj}.
For Case (ii), we apply \eqref{eqn:third_inequality_Cauchy}. 
For Case (iii), we apply \eqref{eqn:fifth_inequality_Cauchy_Ramanujan_Conj} if $\pi_{1}$ and $\pi_{2}$ can be induced from the same quadratic extension $K$; otherwise, we apply \eqref{eqn:third_inequality_Cauchy}.

We elaborate on Case (ii) where $\pi_{1}$ and $\pi_{2}$ can be induced from the same quadratic extension $K$, focusing on the subcase where $I_{K}^{F}(\nu_{2})$ satisfies property P. From the proof of Lemma \ref{lem:order_of_L_function_both_dihedral_pi2_NP_same_K}, we consider two situations: $\psi_{1} / \psi_{1}^{\tau} \simeq \nu_{2} / \nu_{2}^{\tau}$ and $\psi_{1} / \psi_{1}^{\tau} \not\simeq \nu_{2} / \nu_{2}^{\tau}$. 
If $\psi_{1} / \psi_{1}^{\tau} \simeq \nu_{2} / \nu_{2}^{\tau}$, then
\begin{align*}
    -\ord_{s=1} L^{T}(s, \Pi_{1} \times \Pi_{1} \times \Pi_{2} \times \Pi_{2}) &= 12; \mbox{ and } \\
    -\ord_{s=1} L^{T}(s, \Pi_{1} \times \Pi_{2} \times \Pi_{2} \times \Pi_{2}) &= 10 .
\end{align*}
Applying \eqref{eqn:third_inequality_Cauchy}, we obtain
\begin{align*}
    3 \leq (21 - 28 + 72 - 40 + 11)^{\frac{1}{2}} \underline{\delta}(S_{\ast})^{\frac{1}{2}}
\end{align*}
which gives $\underline{\delta}(S_{\ast}) \geq \frac{1}{4}$.
Now, if $\psi_{1} / \psi_{1}^{\tau} \not\simeq \nu_{2} / \nu_{2}^{\tau}$, then 
\begin{align*}
    -\ord_{s=1} L^{T}(s, \Pi_{1} \times \Pi_{1} \times \Pi_{2} \times \Pi_{2}) &= 8; \mbox{ and } \\
    -\ord_{s=1} L^{T}(s, \Pi_{1} \times \Pi_{2} \times \Pi_{2} \times \Pi_{2}) &= 4 .
\end{align*}
Applying \eqref{eqn:third_inequality_Cauchy} gives us $\underline{\delta}(S_{\ast}) \geq \frac{1}{4}$.

Similar calculations apply when $I_{K}^{F}(\nu_{2})$ does not satisfy property P, completing the proof for Case (ii) where $\pi_{1}$ and $\pi_{2}$ can be induced from the same quadratic extension $K$.
\end{proof}

\section{Exactly one of $\pi_{1}$ and $\pi_{2}$ is dihedral}\label{sec:exactly_one_dihedral}

The treatment for this section follows the same approach as in previous sections. We omit some computational details. Instead, we provide a full proof for some selected cases.
We adopt the same notation as in Section \ref{subsec:Cuspidality_of_symmetric_power}.
The following presents a detailed version of Theorem \ref{thm:main_theorem} for the case in which exactly one of $\pi_{1}$ and $\pi_{2}$ is dihedral.
\begin{thm}\label{thm:main_theorem_exactly_one_dihedral}
Let $\pi_{1} \in \mathcal{A}_{0}(\GL_{2}(\mathbb{A}_{F}))$ be a dihedral representation with unitary central character and $\pi_{2} \in \mathcal{A}_{0}(\GL_{2}(\mathbb{A}_{F}))$ be a non-dihedral representation with unitary central character. Assume that $\pi_{1}$ and $\pi_{2}$ are not twist-equivalent.
\begin{enumerate}[(i)]
    \item If $\pi_{1}$ satisfies property P, then 
    \begin{align*}
        \underline{\delta}(S_{\ast}^{>}(\pi_{1}, \pi_{2})) \geq \frac{9}{40} .
    \end{align*}

    \item If $\pi_{1}$ satisfies property P, then 
    \begin{align*}
        \underline{\delta}(S_{\ast}^{>}(\pi_{2}, \pi_{1})) \geq
        \begin{cases}
            \frac{1}{16} & \text{if } \pi_{2} \text{ is tetrahedral,} \\
            \frac{1}{13} & \text{if } \pi_{2} \text{ is octahedral,} \\
            \frac{1}{12} & \text{if } \pi_{2} \text{ is non-solvable polyhedral.}
        \end{cases}
    \end{align*}

    \item If $\pi_{1}$ does not satisfy property P, then 
    \begin{align*}
        \underline{\delta}(S_{\ast}^{>}(\pi_{1}, \pi_{2})) \geq
        \begin{cases}
            \frac{4}{27} & \text{if } \pi_{2} \text{ is tetrahedral,} \\
            \frac{1}{8} & \text{if } \pi_{2} \text{ is octahedral,} \\
            \frac{4}{27} & \text{if } \pi_{2} \text{ is non-solvable polyhedral.}
        \end{cases}
    \end{align*}

    \item If $\pi_{1}$ does not satisfy property P, then 
    \begin{align*}
        \underline{\delta}(S_{\ast}^{>}(\pi_{2}, \pi_{1})) \geq
        \begin{cases}
            \frac{1}{16} & \text{if } \pi_{2} \text{ is tetrahedral,} \\
            \frac{1}{12} & \text{if } \pi_{2} \text{ is octahedral,} \\
            \frac{1}{10} & \text{if } \pi_{2} \text{ is non-solvable polyhedral.}
        \end{cases}
    \end{align*}
\end{enumerate}
\end{thm}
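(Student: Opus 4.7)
The plan is to parallel the arguments of Sections \ref{sec:both_nondihedral} and \ref{sec:both_dihedral} essentially verbatim, combining the dihedral decomposition from Lemma \ref{lem:adjoint_lift_decomposition_for_dihedral} for $\pi_{1}$ with the Kim--Shahidi decomposition from Theorem \ref{thm:Kim_Shahidi_cuspidality_criterion} for $\pi_{2}$. Writing $\Pi_{i} = \Ad(\pi_{i})$, the main computation is to determine, in each of the sub-cases (i)--(iv), the orders of the poles at $s=1$ of the triple and quadruple Rankin--Selberg products $L^{T}(s, \Pi_{1}^{\times a} \times \Pi_{2}^{\times b})$ for $a+b \le 4$. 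These are obtained by writing each $\Pi_{i}$ as an isobaric sum of known pieces (via Lemma \ref{lem:adjoint_lift_decomposition_for_dihedral} on the dihedral side and via $\Pi_{2} \boxtimes \Pi_{2} \simeq 1 \boxplus \Pi_{2} \boxplus (\Sym^{4} \pi_{2} \otimes \omega_{2}^{-2})$ together with Theorem \ref{thm:Kim_Shahidi_cuspidality_criterion} on the non-dihedral side), expanding the product, and counting dual pairs among the resulting factors. Non-twist-equivalence of $\pi_{1}$ and $\pi_{2}$ forces $\Pi_{1} \not\simeq \Pi_{2}$, and by \cite[Proposition 9.6]{Ramakrishnan_Wang_cuspidality_GL3xGL2_2004} also that $\Pi_{1}$ and $\Pi_{2}$ are not twist-equivalent, which kills almost all potentially dangerous pairings.

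Once the orders are tabulated, substitute them into the Cauchy--Schwarz-type inequalities already introduced in earlier sections. For parts (i) and (iii), where we bound $\underline{\delta}(S_{\ast}^{>}(\pi_{1},\pi_{2}))$ with $\pi_{1}$ dihedral, take $A_{v} = a_{v}(\Pi_{1})$ and $B_{v} = a_{v}(\Pi_{2})$; since the Ramanujan--Petersson bound holds on the dihedral side we have $A_{v}+1 \le 4$, so the bound \eqref{eqn:fourth_inequality_Cauchy_Ramanujan_Conj} (adapted to $(A_{v}-B_{v})(A_{v}+1)$ exactly as in the proof of Theorem \ref{thm:main_theorem_both_dihedral}) is available. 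Compare this against the pure Cauchy--Schwarz bound \eqref{eqn:first_inequality_Cauchy} (when the quadruple product $L^{T}(s, \Pi_{1}^{\times 3} \times \Pi_{2})$ is accessible) and against the Cauchy--Schwarz-plus-triangle bound \eqref{eqn:second_inequality_Cauchy_triangle} (when it is not, namely when $\pi_{2}$ is non-solvable polyhedral), and retain the strongest numerical output. For parts (ii) and (iv) the roles of $A_{v}$ and $B_{v}$ are swapped, so Ramanujan now constrains $B_{v}$ rather than $A_{v}$; again one selects between \eqref{eqn:first_inequality_Cauchy} and \eqref{eqn:second_inequality_Cauchy_triangle} according to whether $\Sym^{4}\pi_{2}$ is explicitly decomposable.

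The main obstacle is the case-by-case bookkeeping. When $\pi_{2}$ is tetrahedral, $\Sym^{4} \pi_{2} \otimes \omega_{2}^{-2}$ contains a non-trivial cubic character $\mu$ with $\Pi_{2} \otimes \mu \simeq \Pi_{2}$; when $\pi_{2}$ is octahedral it contains a non-trivial quadratic character $\eta$ and a cuspidal dihedral representation $\sigma$; and the dihedral $\pi_{1}$ contributes a quadratic character $\chi_{1}$, a Hecke character $\nu_{1} = \psi_{1}/\psi_{1}^{\tau_{1}}$, and possibly a cuspidal dihedral induction $I_{K_{1}}^{F}(\nu_{1})$. In each combination one must verify that these auxiliary characters and cuspidal constituents do not conspire to produce extra poles—for example $\chi_{1} \not\simeq \mu$, $I_{K_{1}}^{F}(\nu_{1}) \not\simeq \sigma$ and no twist thereof, $\Pi_{1} \otimes \chi_{1}$ not twist-equivalent to $\Pi_{2} \otimes \eta$, and so on—using non-twist-equivalence of $\pi_{1}$ and $\pi_{2}$ as the central input (exactly as in the analogous verifications inside the proofs of Lemmas \ref{lem:order_of_L_function_both_non_dihedral} and \ref{lem:order_of_L_function_both_dihedral_pi2_NP_diff_K}). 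Once the orders are correctly tabulated, each stated numerical bound follows by direct substitution, which is presumably why the excerpt omits the computation details.
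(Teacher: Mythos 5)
Your overall strategy matches the paper's: decompose $\Pi_{1} = \Ad(\pi_{1})$ via Lemma \ref{lem:adjoint_lift_decomposition_for_dihedral}, decompose $\Pi_{2} \boxtimes \Pi_{2}$ via Theorem \ref{thm:Kim_Shahidi_cuspidality_criterion}, tabulate pole orders of the triple and quadruple Rankin--Selberg products, and feed these into the Cauchy--Schwarz inequalities \eqref{eqn:first_inequality_Cauchy} and \eqref{eqn:fourth_inequality_Cauchy_Ramanujan_Conj}, choosing whichever gives the best constant in each sub-case. However, there is one concrete error in your plan that would cause some of the stated bounds to fail.

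You claim that $L^{T}(s, \Pi_{1}^{\times 3} \times \Pi_{2})$ (and in parts (ii), (iv), its counterpart $L^{T}(s, \Pi_{2}^{\times 3} \times \Pi_{1})$) is \emph{inaccessible} when $\pi_{2}$ is non-solvable polyhedral, and that one must fall back on the triangle-inequality variant \eqref{eqn:second_inequality_Cauchy_triangle}. This imports an obstruction from the both-non-dihedral setting of Section \ref{sec:both_nondihedral}, where the problematic term $\Sym^{4}\pi_{1} \times \Sym^{4}\pi_{2}$ on $\GL_{5} \times \GL_{5}$ genuinely is out of reach. It does not apply here. Because $\pi_{1}$ is dihedral, $\Pi_{1}$ is an isobaric sum of Hecke characters and at most one dihedral $\GL_{2}$ piece; consequently every factor of the quadruple product reduces, after regrouping, to a Rankin--Selberg $L$-function of shape $\GL_{5} \times \GL_{1}$, $\GL_{5} \times \GL_{2}$, $\GL_{5} \times \GL_{3}$, or (for the most complicated term $\Sym^{4}\pi_{2} \times \Pi_{2} \times I_{K_{1}}^{F}(\nu_{1})$) $\GL_{5} \times \GL_{6}$ after first forming the automorphic product $\Pi_{2} \boxtimes I_{K_{1}}^{F}(\nu_{1})$ via Kim--Shahidi. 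All of these have known analytic behaviour at $s=1$, and the cuspidality criterion of \cite{Ramakrishnan_Wang_cuspidality_GL3xGL2_2004} shows none of them produces a pole. So \eqref{eqn:first_inequality_Cauchy} is available uniformly, and this is precisely what the paper uses for the non-solvable polyhedral $\pi_{2}$ sub-cases of all four parts. If you instead used \eqref{eqn:second_inequality_Cauchy_triangle} as you propose, you would obtain strictly weaker constants; for example in part (ii) with $\pi_{2}$ non-solvable polyhedral, \eqref{eqn:second_inequality_Cauchy_triangle} gives roughly $1/(\sqrt{6}+2)^{2} \approx 1/19.8$ rather than the stated $1/12$, and in part (iii) with $\pi_{1}$ satisfying property Q one gets roughly $4/(\sqrt{15}+2)^{2}$ rather than $4/27$. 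So the proof as outlined would not establish the theorem in those sub-cases; you must recognize that the decomposability of the dihedral adjoint makes the full quadruple product accessible.
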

\begin{proof}
We briefly discuss which inequality yields the optimal result.
For Case (i), we apply \eqref{eqn:first_inequality_Cauchy}.
For Case (ii), if $\pi_{2}$ is tetrahedral, we apply \eqref{eqn:fourth_inequality_Cauchy_Ramanujan_Conj}; if $\pi_{2}$ is octahedral or non-solvable polyhedral, we apply \eqref{eqn:first_inequality_Cauchy}.
For Case (iii), if $\pi_{2}$ is octahedral, we apply \eqref{eqn:fourth_inequality_Cauchy_Ramanujan_Conj}; if $\pi_{2}$ is tetrahedral or non-solvable polyhedral, we apply \eqref{eqn:first_inequality_Cauchy}.
For Case (iv), if $\pi_{2}$ is tetrahedral, we apply \eqref{eqn:fourth_inequality_Cauchy_Ramanujan_Conj}; if $\pi_{2}$ is octahedral or non-solvable polyhedral, we apply \eqref{eqn:first_inequality_Cauchy}.

We elaborate on Case (iv) with $\pi_{2}$ being octahedral. As before, we need to calculate the order of poles of corresponding $L$-functions at $s=1$. 
We have the triple product $L$-functions decomposition:
\begin{align*}
    &L^{T}(s, \Pi_{1} \times \Pi_{1} \times \Pi_{2}) \\
    =& L^{T}(s, \Ad(\pi_{2}))^{2} L^{T}(s, \Ad(\pi_{2}) \otimes \chi_{1}) L^{T}(s, \Ad(\pi_{2}) \times I_{K_{1}}^{F}(\psi_{1}/\psi_{1}^{\tau_{1}}))^{2} L^{T}(s, \Ad(\pi_{2}) \times I_{K_{1}}^{F}(\nu_{1}/\nu_{1}^{\tau_{1}}))
\end{align*}
and
\begin{align*}
    &L^{T}(s, \Pi_{1} \times \Pi_{2} \times \Pi_{2}) \\
    =& L^{T}(s, \chi_{1}) L^{T}(s, I_{K_{1}}^{F}(\psi_{1}/\psi_{1}^{\tau_{1}})) L^{T}(s, \sigma_{2} \otimes \chi_{1}) L^{T}(s, I_{K_{1}}^{F}(\psi_{1}/\psi_{1}^{\tau_{1}}) \times \sigma_{2}) L^{T}(s, \Ad(\pi_{2}) \otimes \chi_{1}) \\
    &\cdot L^{T}(s, \Ad(\pi_{2}) \times I_{K_{1}}^{F}(\psi_{1}/\psi_{1}^{\tau_{1}})) L^{T}(s, \Ad(\pi_{2}) \otimes \chi_{1}\eta_{2}) L^{T}(s, \Ad(\pi_{2}) \times I_{K_{1}}^{F}(\psi_{1}/\psi_{1}^{\tau_{1}}) \otimes \eta_{2}) .
\end{align*}
Observe that $L^{T}(s, \Pi_{1} \times \Pi_{1} \times \Pi_{2})$ is always holomorphic at $s=1$.
From the above decomposition, we consider two situations: $I_{K_{1}}^{F}(\psi_{1}/\psi_{1}^{\tau_{1}}) \not \simeq \sigma_{2}$ and $I_{K_{1}}^{F}(\psi_{1}/\psi_{1}^{\tau_{1}}) \simeq \sigma_{2}$. 
If $I_{K_{1}}^{F}(\psi_{1}/\psi_{1}^{\tau_{1}}) \not \simeq \sigma_{2}$, then $L^{T}(s, \Pi_{1} \times \Pi_{2} \times \Pi_{2})$ is holomorphic at $s=1$. Otherwise, it has a simple pole at $s=1$.

We proceed similarly for the quadruple product $L$-functions. 
\begin{align}\label{eqn:decomp_1122_di_tetra}
\begin{aligned}
    &L^{T}(s, \Pi_{1} \times \Pi_{1} \times \Pi_{2} \times \Pi_{2}) \\
    =& \zeta_{F}^{T}(s)^{2} L^{T}(s, \chi_{1}) L^{T}(s, I_{K_{1}}^{F}(\psi_{1}/\psi_{1}^{\tau_{1}}))^{2} L^{T}(s, I_{K_{1}}^{F}(\nu_{1}/\nu_{1}^{\tau_{1}})) L^{T}(s, \sigma_{2})^{2} L^{T}(s, \sigma_{2} \otimes \chi_{1}) \\
    &\cdot L^{T}(s, I_{K_{1}}^{F}(\psi_{1}/\psi_{1}^{\tau_{1}}) \times \sigma_{2})^{2} L^{T}(s, I_{K_{1}}^{F}(\nu_{1}/\nu_{1}^{\tau_{1}}) \times \sigma_{2}) L^{T}(s, \Ad(\pi_{2}))^{2} L^{T}(s, \Ad(\pi_{2}) \otimes \chi_{1}) \\
    &\cdot L^{T}(s, \Ad(\pi_{2}) \times I_{K_{1}}^{F}(\psi_{1}/\psi_{1}^{\tau_{1}}))^{2} L^{T}(s, \Ad(\pi_{2}) \times I_{K_{1}}^{F}(\nu_{1}/\nu_{1}^{\tau_{1}})) L^{T}(s, \Ad(\pi_{2}) \otimes \eta_{2})^{2} \\
    &\cdot L^{T}(s, \Ad(\pi_{2}) \otimes \chi_{1}\eta_{2}) L^{T}(s, \Ad(\pi_{2}) \times I_{K_{1}}^{F}(\psi_{1}/\psi_{1}^{\tau_{1}}) \otimes \eta_{2})^{2} L^{T}(s, \Ad(\pi_{2}) \times I_{K_{1}}^{F}(\nu_{1}/\nu_{1}^{\tau_{1}}) \otimes \eta_{2}) ,
\end{aligned}
\end{align}
and
\begin{align}\label{eqn:decomp_1222_di_tetra}
\begin{aligned}
    &L^{T}(s, \Pi_{1} \times \Pi_{2} \times \Pi_{2} \times \Pi_{2}) \\
    =& L^{T}(s, \chi_{1}) L^{T}(s, I_{K_{1}}^{F}(\psi_{1}/\psi_{1}^{\tau_{1}})) L^{T}(s, \chi_{1}\eta_{2}) L^{T}(s, I_{K_{1}}^{F}(\psi_{1}/\psi_{1}^{\tau_{1}}) \otimes \eta_{2}) L^{T}(s, \sigma_{2} \otimes \chi_{1}) \\
    &\cdot L^{T}(s, I_{K_{1}}^{F}(\psi_{1}/\psi_{1}^{\tau_{1}}) \times \sigma_{2}) L^{T}(s, \sigma_{2} \otimes \chi_{1}\eta_{2}) L^{T}(s, I_{K_{1}}^{F}(\psi_{1}/\psi_{1}^{\tau_{1}}) \times \sigma_{2} \otimes \eta_{2}) L^{T}(s, \Ad(\pi_{2}) \otimes \chi_{1})^{3} \\
    &\cdot L^{T}(s, \Ad(\pi_{2}) \times I_{K_{1}}^{F}(\psi_{1}/\psi_{1}^{\tau_{1}}))^{3} L^{T}(s, \Ad(\pi_{2}) \otimes \chi_{1}\eta_{2})^{2} L^{T}(s, \Ad(\pi_{2}) \times I_{K_{1}}^{F}(\psi_{1}/\psi_{1}^{\tau_{1}}) \otimes \eta_{2})^{2}  \\
    &\cdot L^{T}(s, \Ad(\pi_{2}) \times \sigma_{2} \otimes \chi_{1}) L^{T}(s, \Ad(\pi_{2}) \times I_{K_{1}}^{F}(\psi_{1}/\psi_{1}^{\tau_{1}}) \boxtimes \sigma_{2}) ,
\end{aligned}
\end{align}
From the above decomposition, we consider two situations: $I_{K_{1}}^{F}(\psi_{1}/\psi_{1}^{\tau_{1}}) \not \simeq \sigma_{2}$ and $I_{K_{1}}^{F}(\psi_{1}/\psi_{1}^{\tau_{1}}) \simeq \sigma_{2}$. 
If $I_{K_{1}}^{F}(\psi_{1}/\psi_{1}^{\tau_{1}}) \not \simeq \sigma_{2}$, then every $L$-function on the right-hand side of \eqref{eqn:decomp_1122_di_tetra}, other than $\zeta_{F}^{T}(s)$ and $L^{T}(s, I_{K_{1}}^{F}(\nu_{1}/\nu_{1}^{\tau_{1}}) \times \sigma_{2})$, is holomorphic at $s=1$.
Furthermore, we claim that every $L$-function on the right-hand side of \eqref{eqn:decomp_1222_di_tetra}, other than $L^{T}(s, I_{K_{1}}^{F}(\psi_{1}/\psi_{1}^{\tau_{1}}) \times \sigma_{2} \otimes \eta_{2})$ and $L^{T}(s, \chi_{1}\eta_{2})$, is holomorphic at $s=1$.
It suffices to show that $L^{T}(s, \Ad(\pi_{2}) \times I_{K_{1}}^{F}(\psi_{1}/\psi_{1}^{\tau_{1}}) \boxtimes \sigma_{2})$ is holomorphic at $s=1$.
We express $\sigma_{2} = I_{K_{2}}^{F}(\phi_{2}^{-1})$ for some character $\phi_{2}$ of $K_{2}$, where $K_{2}$ is a quadratic extension of $F$ and $\eta_{2}$ is the quadratic character associated to $K_{2}/F$ \cite[Theorem 3.3.7]{Kim_Shahidi_cuspidality_sym_2002}.
We then apply the cuspidality criterion for $\GL(2) \times \GL(2)$ \cite[Theorem 11.2]{Ramakrishnan_algebraic_cycles_2004}, which states that the Rankin-Selberg product $I_{K_{1}}^{F}(\psi_{1}/\psi_{1}^{\tau_{1}}) \boxtimes I_{K_{2}}^{F}(\phi_{2}^{-1})$ is either cuspidal or admits the decomposition below when $K_{1} = K_{2} = K$:
\begin{align*}
    I_{K}^{F}(\psi_{1}/\psi_{1}^{\tau}) \boxtimes I_{K}^{F}(\phi_{2}^{-1}) \simeq I_{K}^{F}((\psi_{1}/\psi_{1}^{\tau})\phi_{2}^{-1}) \boxplus I_{K}^{F}((\psi_{1}/\psi_{1}^{\tau})(\phi_{2}^{-1})^{\tau}) .
\end{align*}
In either case, $L^{T}(s, \Ad(\pi_{2}) \times I_{K_{1}}^{F}(\psi_{1}/\psi_{1}^{\tau_{1}}) \boxtimes \sigma_{2})$ is holomorphic at $s=1$.

It follows that
\begin{align*}
    -\ord_{s=1} L^{T}(s, \Pi_{1} \times \Pi_{1} \times \Pi_{2} \times \Pi_{2}) &= 2 \text{ or } 3; \text{ and} \\
    -\ord_{s=1} L^{T}(s, \Pi_{1} \times \Pi_{2} \times \Pi_{2} \times \Pi_{2}) &= 0, 1 \text{ or } 2 .
\end{align*}
Dividing \eqref{eqn:first_inequality_Cauchy} by $\log (\frac{1}{s-1})$ and taking the limit inferior as $s \to 1^{+}$, we obtain
\begin{align*}
     1 \leq (4 - 2\cdot 0 + 3 + 2\cdot 1 - 4\cdot 0 + 2\cdot 0 + 1 - 2\cdot 0 + 2)^{\frac{1}{2}} \underline{\delta}(S_{\ast}^{>}(\pi_{2},\pi_{1}))^{\frac{1}{2}} ,
\end{align*}
which gives
\begin{equation*}
    \underline{\delta}(S_{\ast}^{>}(\pi_{2},\pi_{1})) \geq \frac{1}{12} .
\end{equation*}

Recall that $\sigma_{2} = I_{K_{2}}^{F}(\phi_{2}^{-1})$ for some character $\phi_{2}$ of $K_{2}$, where $K_{2}$ is a quadratic extension of $F$ and $\eta_{2}$ is the quadratic character associated to $K_{2}/F$ \cite[Theorem 3.3.7]{Kim_Shahidi_cuspidality_sym_2002}.
If $I_{K_{1}}^{F}(\psi_{1}/\psi_{1}^{\tau_{1}}) \simeq \sigma_{2}$, then $\eta_{2} = \chi_{1}$ and we can assume $K_{1} = K_{2}$. 
Then, $L^{T}(s, \Pi_{1} \times \Pi_{1} \times \Pi_{2} \times \Pi_{2})$ has a pole of order 4 or 5 at $s=1$, depending on whether $L^{T}(s, I_{K_{1}}^{F}(\nu_{1}/\nu_{1}^{\tau_{1}}) \times \sigma_{2})$ has a pole at $s=1$.
Furthermore, three $L$-functions on the right-hand side of \eqref{eqn:decomp_1222_di_tetra}, namely $L^{T}(s, \chi_{1}\eta_{2})$, $L^{T}(s, I_{K_{1}}^{F}(\psi_{1}/\psi_{1}^{\tau_{1}}) \times \sigma_{2})$ and $L^{T}(s, I_{K_{1}}^{F}(\psi_{1}/\psi_{1}^{\tau_{1}}) \times \sigma_{2} \otimes \eta_{2})$, have a simple pole at $s=1$.
Hence, $L^{T}(s, \Pi_{1} \times \Pi_{2} \times \Pi_{2} \times \Pi_{2})$ has a pole of order 3 at $s=1$.

Dividing \eqref{eqn:first_inequality_Cauchy} by $\log (\frac{1}{s-1})$ and taking the limit inferior as $s \to 1^{+}$, we obtain
\begin{align*}
     1 \leq (4 - 2\cdot 3 + 5 + 2\cdot 1 - 4\cdot 1 + 2\cdot 0 + 1 - 2\cdot 0 + 2)^{\frac{1}{2}} \underline{\delta}(S_{\ast}^{>}(\pi_{2},\pi_{1}))^{\frac{1}{2}} ,
\end{align*}
which gives
\begin{equation*}
    \underline{\delta}(S_{\ast}^{>}(\pi_{2},\pi_{1})) \geq \frac{1}{4} .
\end{equation*}
Hence, the lower bound $\frac{1}{12}$ holds for all subcases. 
\end{proof}

The following is Theorem \ref{thm:improvements_in_Wong} for the case where exactly one of $\pi_{1}$ and $\pi_{2}$ is dihedral.
\begin{thm}
Let $\pi_{1} \in \mathcal{A}_{0}(\GL_{2}(\mathbb{A}_{F}))$ be a dihedral representation with unitary central character and $\pi_{2} \in \mathcal{A}_{0}(\GL_{2}(\mathbb{A}_{F}))$ be a non-dihedral representation with unitary central character. Assume that $\pi_{1}$ and $\pi_{2}$ are not twist-equivalent.
\begin{enumerate}[(i)]
    \item If $\pi_{1}$ satisfies property P, then 
    \begin{align*}
        \underline{\delta}(S_{\ast}(\pi_{1}, \pi_{2})) \geq
        \begin{cases}
            \frac{8}{23} & \text{if } \pi_{2} \text{ is tetrahedral,} \\
            \frac{16}{43} & \text{if } \pi_{2} \text{ is octahedral,} \\
            \frac{8}{21} & \text{if } \pi_{2} \text{ is non-solvable polyhedral.}
        \end{cases}
    \end{align*}

    \item If $\pi_{1}$ does not satisfy property P, then 
    \begin{align*}
        \underline{\delta}(S_{\ast}(\pi_{1}, \pi_{2})) \geq
        \begin{cases}
            \frac{3}{11} & \text{if } \pi_{2} \text{ is tetrahedral,} \\
            \frac{1}{4} & \text{if } \pi_{2} \text{ is octahedral,} \\
            \frac{9}{29} & \text{if } \pi_{2} \text{ is non-solvable polyhedral.}
        \end{cases}
    \end{align*}
\end{enumerate}
\end{thm}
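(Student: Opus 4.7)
The plan is to apply the Cauchy-Schwarz inequality \eqref{eqn:third_inequality_Cauchy} directly to $S_{\ast}(\pi_1,\pi_2)$, following the same template as in the proof of Theorem \ref{thm:improvements_in_Wong_both_dihedral}. Writing $A_v = a_v(\Ad(\pi_1))$, $B_v = a_v(\Ad(\pi_2))$, and $m(\rho) = -\ord_{s=1} L^T(s,\rho)$ for any (virtual) isobaric sum $\rho$, the inequality yields
\[
\underline{\delta}(S_{\ast}(\pi_1,\pi_2)) \;\geq\; \frac{c^2}{M},
\]
where $c = m(\Pi_1 \times \Pi_1) - 2\,m(\Pi_1\times\Pi_2) + m(\Pi_2\times\Pi_2)$ is the second-moment pole order and $M = \sum_{k=0}^{4} (-1)^k \binom{4}{k}\, m(\Pi_1^{\times(4-k)}\times\Pi_2^{\times k})$ is the fourth-moment counterpart. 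So the problem reduces to computing these two pole orders.

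The second-moment quantity $c$ is immediate. By Lemma \ref{lem:order_of_L_function_both_dihedral_pi2_NP_diff_K}(i) (or Lemma \ref{lem:order_of_L_function_both_dihedral_pi2_NP_same_K}(i)), $m(\Pi_1 \times \Pi_1) = 3$ if $\pi_1$ satisfies property P and $2$ otherwise. Since $\pi_2$ is non-dihedral, the Clebsch-Gordan identity $\Pi_2 \times \Pi_2 \simeq 1 \boxplus \Ad(\pi_2) \boxplus (\Sym^2\pi_2\times\Sym^2\pi_2 \text{ component})$ combined with Theorem \ref{thm:Kim_Shahidi_cuspidality_criterion} shows every non-trivial summand is cuspidal or a non-trivial Hecke character, so $m(\Pi_2 \times \Pi_2) = 1$. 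The cross term vanishes: by Lemma \ref{lem:adjoint_lift_decomposition_for_dihedral}, the isobaric pieces of $\Pi_1$ are characters of $F$ together possibly with the dihedral $I_{K_1}^F(\nu_1)$, and the Rankin-Selberg product of each with $\Pi_2$ is holomorphic at $s=1$ (in the dihedral case by appealing to the $\GL_3\times\GL_2$ cuspidality criterion, using that $\Pi_2$ is three-dimensional while any two-dimensional dihedral induction cannot be twist-equivalent to $\Pi_2$). Hence $c = 4$ in case (i) and $c = 3$ in case (ii).

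The quantity $M$ is computed subcase by subcase in the same manner as the proof of Theorem \ref{thm:main_theorem_exactly_one_dihedral}. I would decompose each $\Pi_1^{\times (4-k)} \times \Pi_2^{\times k}$ into an isobaric sum of cuspidal representations by combining Lemma \ref{lem:adjoint_lift_decomposition_for_dihedral} on the $\Pi_1$ side with iterated Clebsch-Gordan and Theorem \ref{thm:Kim_Shahidi_cuspidality_criterion} on the $\Pi_2$ side (as illustrated by equations \eqref{eqn:decomp_1122_di_tetra} and \eqref{eqn:decomp_1222_di_tetra}), and then count the multiplicity of the trivial representation. The pure terms $m(\Pi_1^{\times 4})$ and $m(\Pi_2^{\times 4})$ are provided by Lemma \ref{lem:order_of_L_function_both_dihedral_pi2_NP_diff_K}(vi) (or Lemma \ref{lem:order_of_L_function_both_dihedral_pi2_NP_same_K}) and by Lemma \ref{lem:order_of_L_function_both_non_dihedral}(iii). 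Assembling the alternating sum produces values of $M$ giving $c^2/M$ equal to the advertised bounds $\tfrac{8}{23}$, $\tfrac{16}{43}$, $\tfrac{8}{21}$ in case (i) and $\tfrac{3}{11}$, $\tfrac{1}{4}$, $\tfrac{9}{29}$ in case (ii), according as $\pi_2$ is tetrahedral, octahedral, or non-solvable polyhedral.

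The main obstacle is the octahedral subcase, where $\Sym^4 \pi_2 \otimes \omega_2^{-2}$ carries a cuspidal dihedral summand $\sigma_2 = I_{K_2}^F(\phi_2^{-1})$, so the mixed products $I_{K_1}^F(\nu_1) \times \sigma_2$ and $\Ad(\pi_1) \boxtimes \sigma_2$ become $\GL_2 \times \GL_2$ Rankin-Selbergs whose cuspidality is governed by Ramakrishnan's criterion; the possibility that $K_1 = K_2$ and the inducing characters conspire to produce additional trivial summands must be tracked carefully. A second complication is that when $\pi_1$ does not satisfy property P, the fourth-moment count depends a priori on whether $I_{K_1}^F(\nu_1)$ satisfies property P and whether $\pi_1$ has property Q or R; one must verify that the largest value of $M$ across all these subcases is at most the target value, so that the stated lower bound holds uniformly.
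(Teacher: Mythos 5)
Your proposal follows the paper's own route exactly: apply inequality \eqref{eqn:third_inequality_Cauchy} to obtain $\underline{\delta}(S_\ast) \geq c^2/M$ with $c$ the second-moment and $M$ the fourth-moment pole order, compute $c = 4$ (property P) or $c = 3$ (not property P), and then track $M$ subcase by subcase using the lemmas and the decompositions from Section~\ref{sec:exactly_one_dihedral}. The paper's proof likewise invokes \eqref{eqn:third_inequality_Cauchy}, defers the fourth-moment bookkeeping to Lemma~\ref{lem:order_of_L_function_both_dihedral_pi2_NP_diff_K} and the proof of Theorem~\ref{thm:main_theorem_exactly_one_dihedral}, and only works out the octahedral/property-Q case explicitly, obtaining $3 \leq (14-0+18-0+4)^{1/2}\underline{\delta}(S_\ast)^{1/2}$, i.e.\ $\underline{\delta}(S_\ast)\geq 1/4$; your identification of the worst subcases (largest $M$) and of the octahedral $\sigma_2$-interaction as the delicate point matches this.

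Two small inaccuracies worth flagging, neither of which affects the conclusion. First, $\Pi_2\times\Pi_2$ decomposes via Clebsch–Gordan as $1\boxplus\Ad(\pi_2)\boxplus(\Sym^4\pi_2\otimes\omega_2^{-2})$, not as you wrote it; what you then use is Theorem~\ref{thm:Kim_Shahidi_cuspidality_criterion} applied to the third summand, which is the right ingredient. Second, the holomorphy of $L^T(s,\Pi_1\times\Pi_2)$ at $s=1$ is not a twist-equivalence statement: for the piece $L^T(s,\Ad(\pi_2)\times I_{K_1}^F(\nu_1))$ a pole would require $I_{K_1}^F(\nu_1)\simeq\widetilde{\Ad(\pi_2)}$, which is ruled out simply because one is $2$-dimensional and the other $3$-dimensional; the $\GL_3\times\GL_2$ cuspidality criterion (Ramakrishnan–Wang) is not what is needed at that step.
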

\begin{remark}
Our result improves Wong's \cite[Section 5.1]{Wong_Refinements_Strong_Multiplcity_One_2022} in all the cases above.
\end{remark}
\begin{proof}
To derive all of the above bounds, we divide \eqref{eqn:third_inequality_Cauchy} by $\log (\frac{1}{s-1})$ and take the limit inferior as $s \to 1^{+}$.
We now elaborate on Case (ii) with $\pi_{2}$ being octahedral. 
We need to compute the order of the pole of $L^{T}(s, \Pi_{1} \times \Pi_{1} \times \Pi_{1} \times \Pi_{2})$ at $s=1$. The orders of poles of all other corresponding quadruple product $L$-functions have already been studied in Lemma \ref{lem:order_of_L_function_both_non_dihedral}, Lemma \ref{lem:order_of_L_function_both_dihedral_pi2_NP_diff_K}, and in the proof of Theorem \ref{thm:main_theorem_exactly_one_dihedral}.
Assume that $I_{K_{1}}^{F}(\nu_{1})$ satisfies property P. From equation \eqref{eqn:decomp_222_dihedral_P} in Lemma \ref{lem:L-function_decomposition_dihedral_diff_quad_ext_P_and_NP}, we obtain
\begin{align*}
    &L^{T}(s, \Pi_{1} \times \Pi_{1} \times \Pi_{1} \times \Pi_{2}) \\
    =& L^{T}(s, \Ad(\pi_{2}))^{3} L^{T}(s, \Ad(\pi_{2}) \otimes \chi_{1})^{4} L^{T}(s, I_{K_{1}}^{F}(\psi_{1}/\psi_{1}^{\tau_{1}}) \times \Ad(\pi_{2}))^{7} L^{T}(s, \Ad(\pi_{2}) \otimes (\nu_{1}/\nu_{1}^{\tau_{1}}))^{3} \\
    &\cdot L^{T}(s, \Ad(\pi_{2}) \otimes (\nu_{1}/\nu_{1}^{\tau_{1}})\chi_{1})^{3} .
\end{align*}
Similarly, if $I_{K_{1}}^{F}(\nu_{1})$ does not satisfy property P, then from equation \eqref{eqn:decomp_222_dihedral_NP} in Lemma \ref{lem:L-function_decomposition_dihedral_diff_quad_ext_P_and_NP}, we obtain
\begin{align*}
    &L^{T}(s, \Pi_{1} \times \Pi_{1} \times \Pi_{1} \times \Pi_{2}) \\
    =& L^{T}(s, \Ad(\pi_{2}))^{3} L^{T}(s, \Ad(\pi_{2}) \otimes \chi_{1})^{4} L^{T}(s, I_{K_{1}}^{F}(\psi_{1}/\psi_{1}^{\tau_{1}}) \times \Ad(\pi_{2}))^{6} L^{T}(s, I_{K_{1}}^{F}(\nu_{1}/\nu_{1}^{\tau_{1}}) \times \Ad(\pi_{2}))^{3} \\
    &\cdot L^{T}(s, I_{K_{1}}^{F}(\nu_{1}^{3}) \times \Ad(\pi_{2})) .
\end{align*}
In both cases, $L^{T}(s, \Pi_{1} \times \Pi_{1} \times \Pi_{1} \times \Pi_{2})$ is holomorphic at $s=1$.

We consider three subcases: $I_{K_{1}}^{F}(\nu_{1})$ satisfies property P, $\pi_{1}$ satisfies property Q, and $\pi_{1}$ satisfies property R.
First, consider the subcase where $I_{K_{1}}^{F}(\nu_{1})$ satisfies property P. As in the proof of Theorem \ref{thm:main_theorem_exactly_one_dihedral}, we have to consider whether $I_{K_{1}}^{F}(\psi_{1}/\psi_{1}^{\tau_{1}}) \simeq \sigma_{2}$. If $I_{K_{1}}^{F}(\psi_{1}/\psi_{1}^{\tau_{1}}) \not \simeq \sigma_{2}$, dividing \eqref{eqn:third_inequality_Cauchy} by $\log (\frac{1}{s-1})$ and taking the limit inferior as $s \to 1^{+}$ yields
\begin{align*}
     3 \leq (11 - 4 \cdot 0 + 6 \cdot 3 - 4 \cdot 0 + 4)^{\frac{1}{2}} \underline{\delta}(S_{\ast})^{\frac{1}{2}} ,
\end{align*}
which gives $\underline{\delta}(S_{\ast}) \geq \frac{3}{11}$.
If $I_{K_{1}}^{F}(\psi_{1}/\psi_{1}^{\tau_{1}}) \simeq \sigma_{2}$, the same operation yields
\begin{align*}
     3 \leq (11 - 4 \cdot 0 + 6 \cdot 5 - 4 \cdot 3 + 4)^{\frac{1}{2}} \underline{\delta}(S_{\ast})^{\frac{1}{2}} ,
\end{align*}
which again gives $\underline{\delta}(S_{\ast}) \geq \frac{3}{11}$.

Consider the subcase where $\pi_{1}$ satisfies property Q. If $I_{K_{1}}^{F}(\psi_{1}/\psi_{1}^{\tau_{1}}) \not \simeq \sigma_{2}$, the same operation yields
\begin{align*}
     3 \leq (14 - 4 \cdot 0 + 6 \cdot 3 - 4 \cdot 0 + 4)^{\frac{1}{2}} \underline{\delta}(S_{\ast})^{\frac{1}{2}} ,
\end{align*}
which gives $\underline{\delta}(S_{\ast}) \geq \frac{1}{4}$.
If $I_{K_{1}}^{F}(\psi_{1}/\psi_{1}^{\tau_{1}}) \simeq \sigma_{2}$, the same operation yields $\underline{\delta}(S_{\ast}) \geq \frac{1}{4}$.

Consider the subcase where $\pi_{1}$ satisfies property R. If $I_{K_{1}}^{F}(\psi_{1}/\psi_{1}^{\tau_{1}}) \not \simeq \sigma_{2}$, the same operation yields
\begin{align*}
     3 \leq (10 - 4 \cdot 0 + 6 \cdot 3 - 4 \cdot 0 + 4)^{\frac{1}{2}} \underline{\delta}(S_{\ast})^{\frac{1}{2}} ,
\end{align*}
which gives $\underline{\delta}(S_{\ast}) \geq \frac{9}{32}$.
If $I_{K_{1}}^{F}(\psi_{1}/\psi_{1}^{\tau_{1}}) \simeq \sigma_{2}$, the same operation yields $\underline{\delta}(S_{\ast}) \geq \frac{9}{32}$.
Hence, the lower bound $\frac{1}{4}$ holds for all subcases.
\end{proof}

\section{Examples}
We will present examples that demonstrate the sharpness of our theorems. First, we examine an example constructed by Walji in \cite[Section 4.5]{Walji_Strong_Multiplicity_One_GL2_2014}, which illustrates the sharpness of Theorem \ref{thm:main_theorem} and Theorem \ref{thm:improvements_in_Wong} in the case where both $\pi_{1}$ and $\pi_{2}$ are tetrahedral.
Additionally, we construct dihedral examples to illustrate the sharpness of Theorem \ref{thm:main_theorem_both_dihedral} for which both $\pi_{1}$ and $\pi_{2}$ satisfy property P.

\subsection{Tetrahedral example}\label{subsec:tetrahedral_example}
We refer readers to \cite[Section 4.5]{Walji_Strong_Multiplicity_One_GL2_2014} for the detailed construction of such a non-twist-equivalent pair of tetrahedral representations.
We consider the following number field extensions:
\begin{center}
\begin{tikzcd}
    L_{1} \arrow["2",d,-] & & L_{2} \arrow["2",d,-] \\
    M_{1} \arrow["4",dr,-] & & M_{2} \arrow["4",dl,-]\\
    & K \arrow["3",d,-] & \\
    & F &
\end{tikzcd}
\end{center}
such that $\Gal(L_{i}/F) \cong \widetilde{A_{4}}$, $\Gal(M_{i}/F) \cong A_{4}$, and $\Gal(K/F) \cong \ZZ/3\ZZ$ for $i=1, 2$.
Let $H$ be the image of the natural embedding $\Gal(L_{1}L_{2}/F) \hookrightarrow \Gal(L_{1}/F) \times \Gal(L_{2}/F)$. 
The elements of $H$ consist of:
\begin{itemize}
    \item pairs $(a,b)$ where $a,b \in \{ \pm 1, \pm i, \pm j, \pm k \}$,
    \item pairs $(a,b)$ where $a,b \in \{ \pm \omega, \pm i\omega, \pm j\omega, \pm k\omega \}$,
    \item pairs $(a,b)$ where $a,b \in \{ \pm \omega^{2}, \pm i\omega^{2}, \pm j\omega^{2}, \pm k\omega^{2} \}$.
\end{itemize}
We count the elements in $H$ where the absolute value of the trace of the first component is strictly greater than that of the trace of the second component.
These elements are pairs $(a,b)$, where $a = \pm 1$ and $b \in \{ \pm i, \pm j , \pm k \}$.
By the Chebotarev density theorem, we obtain a density of $\frac{12}{192} = \frac{1}{16}$, establishing the sharpness of the bound $\frac{1}{16}$ in Theorem \ref{thm:main_theorem} and Theorem \ref{thm:main_theorem_both_non_dihedral} when both $\pi_{1}$ and $\pi_{2}$ are tetrahedral.
In fact, this example also shows the sharpness of the bound $\frac{1}{8}$ in Theorem \ref{thm:improvements_in_Wong} and Theorem \ref{thm:improvements_in_Wong_both_non_dihedral} for the same case.

\subsection{Dihedral example 1}
The quaternion group $Q_{8}$ has elements $\{ \pm 1, \pm i, \pm j, \pm k \}$, where $i,j,k$ are quaternions.
The unique $2$-dimensional complex irreducible representation of $Q_{8}$ has the character values:
\begin{align}\label{eqn:character_table_Q8}
\begin{array}{c|c|c|c|c|c}
    \text{class size} & 1 & 1 & 2 & 2 & 2 \\
    \hline
     & [1] & [-1] & [i] & [j] & [k] \\
    \hline
    \rho & 2 & -2 & 0 & 0 & 0 
\end{array}
\end{align}

To construct a non-twist-equivalent pair of dihedral representations, we consider the following number field extensions:
\begin{center}
\begin{tikzcd}
    L_{1} \arrow["2",d,-] & & L_{2} \arrow["2",d,-] \\
    M_{1} \arrow["2",dr,-] & & M_{2} \arrow["2",dl,-]\\
    & K \arrow["2",d,-] & \\
    & F &
\end{tikzcd}
\end{center}
such that $\Gal(L_{i}/F) \cong Q_{8}$, $\Gal(M_{i}/F) \cong \ZZ/2\ZZ \times \ZZ/2\ZZ$ and $\Gal(L_{i}/K) \cong \ZZ/4\ZZ$ for $i=1, 2$. In addition, $M_{1} \neq M_{2}$ and $L_{1} \neq L_{2}$ with $L_{1} \cap L_{2} = K$.
Let $\rho_{1}$ and $\rho_{2}$ be the degree $2$ irreducible dihedral representations that factor through $\Gal(L_{1}/F)$ and $\Gal(L_{2}/F)$, respectively.
Let $L_{1}L_{2}$ denote the composite field. There is a natural embedding
\begin{align*}
    \Gal(L_{1}L_{2}/ F) &\hookrightarrow \Gal(L_{1}/ F) \times \Gal(L_{2}/ F) \\
    \sigma & \mapsto (\sigma |_{L_{1}}, \sigma |_{L_{2}})
\end{align*}
with image given by $H = \{ (\phi, \psi) : \phi |_{L_{1} \cap L_{2}} = \psi |_{L_{1} \cap L_{2}} \}$.
The elements of $H$ consist of pairs $(a,b)$, where either $a,b \in \{ \pm 1, \pm i \}$ or $a,b \in \{ \pm j, \pm k \}$.

We count the elements in $H$ where the absolute value of the trace of the first component is strictly greater than that of the trace of the second component.
These elements are pairs $(a,b)$, where $a = \pm 1$ and $b = \pm i $.
By the Chebotarev density theorem, we obtain a density of $\frac{4}{32} = \frac{1}{8}$. 

To show $\rho_{1}$ and $\rho_{2}$ satisfy property P, we consider the character table of $\ZZ/4\ZZ$:
\begin{align}\label{eqn:character_table_Z4}
\begin{array}{c|c|c|c|c}
    \text{class size} & 1 & 1 & 1 & 1  \\
    \hline
     & [1] & [-1] & [j] & [-j] \\
    \hline
    \psi_{0} & 1 & 1 & 1 & 1  \\
    \psi_{1} & 1 & 1 & -1 & -1 \\
    \psi_{2} & 1 & -1 & -\sqrt{-1} & \sqrt{-1} \\
    \psi_{3} & 1 & -1 & \sqrt{-1} & -\sqrt{-1}
\end{array}
\end{align}
where $j$ is a generator of $\ZZ/4\ZZ$.
Observe that $\psi_{3}^{\tau} = \psi_{2}$, and hence $\psi_{3} / \psi_{3}^{\tau} = \psi_{3} / \psi_{2} = \psi_{1}$. 
It follows that $(\psi_{3} / \psi_{3}^{\tau})^{2} = \psi_{1}^{2} = \psi_{0}$, where $\psi_{0}$ is the principal character.

Let $\psi_{3, L_{1}/K}$ and $\psi_{3, L_{2}/K}$ be characters that factor through $\Gal(L_{1}/K)$ and $\Gal(L_{2}/K)$, respectively, and take the values of $\psi_{3}$ as given in the table.
One can observe that $\rho_{1}$ is induced from $\psi_{3, L_{1}/K}$ and $\rho_{2}$ is induced from $\psi_{3, L_{2}/K}$. From the above discussion, both $\rho_{1}$ and $\rho_{2}$ satisfy property P and are induced from the same quadratic extension $K$ of $F$.

By lifting these representations to $\Gal(\overline{F}/F)$ and applying the work of Hecke and Maa\ss, we obtain a pair of dihedral cuspidal automorphic representations $\pi_{1}$ and $\pi_{2}$ such that $\underline{\delta}(S_{\ast}^{>}(\pi_{1}, \pi_{2})) = \frac{1}{8}$ and $\underline{\delta}(S_{\ast}(\pi_{1}, \pi_{2})) = \frac{1}{4}$.
This demonstrates the sharpness of the bound $\frac{1}{8}$ in Theorem \ref{thm:main_theorem_both_dihedral} and the bound $\frac{1}{4}$ in Theorem \ref{thm:improvements_in_Wong_both_dihedral} in the case where both $\pi_{1}$ and $\pi_{2}$ satisfy property P and can be induced from the same quadratic extension $K$ of $F$.

\subsection{Dihedral example 2}
We continue to consider the $2$-dimensional complex irreducible representation $\rho$ for the quaternion group $Q_{8}$, see equation \eqref{eqn:character_table_Q8}.
To construct a non-twist-equivalent pair of dihedral representations, we consider the following number field extensions:
\begin{center}
\begin{tikzcd}
    L_{1} \arrow["2",d,-] & & L_{2} \arrow["2",d,-] \\
    M_{1} \arrow["2",d,-] & & M_{2} \arrow["2",d,-]\\
    K_{1} \arrow["2",dr,-] & & K_{2} \arrow["2",dl,-] \\
    & F &
\end{tikzcd}
\end{center}
such that $\Gal(L_{i}/F) \cong Q_{8}$, $\Gal(M_{i}/F) \cong \ZZ/2\ZZ \times \ZZ/2\ZZ$ and $\Gal(L_{i}/K_{i}) \cong \ZZ/4\ZZ$ for $i=1, 2$. In addition, $K_{1} \neq K_{2}$, $M_{1} \neq M_{2}$ and $L_{1} \neq L_{2}$ with $L_{1} \cap L_{2} = F$.
Let $\rho_{1}$ and $\rho_{2}$ be the degree $2$ irreducible dihedral representations that factor through $\Gal(L_{1}/F)$ and $\Gal(L_{2}/F)$, respectively.
Let $L_{1}L_{2}$ be the composite field. There is a natural embedding (isomorphism) $\Gal(L_{1}L_{2}/ F) \to H:= \Gal(L_{1}/ F) \times \Gal(L_{2}/ F) \cong Q_{8} \times Q_{8}$.

We count the elements in $H$ where the absolute value of the trace of the first component is strictly greater than that of the trace of the second component.
These elements are pairs $(a,b)$, where $a = \pm 1$ and $b \in \{ \pm i, \pm j , \pm k \}$.
By the Chebotarev density theorem, we obtain a density of $\frac{12}{64} = \frac{3}{16}$. 

Similar to the previous dihedral example, $\rho_{i}$ is induced from $\psi_{3, L_{i}/K_{i}}$ for $i=1,2$, where $\psi_{3, L_{i}/K_{i}}$ are characters that factor through $\Gal(L_{i}/K_{i})$ and take the value of $\psi_{3}$ as given in the character table in equation \eqref{eqn:character_table_Z4}.
Thus, both $\rho_{1}$ and $\rho_{2}$ satisfy property P but they are induced from different quadratic extensions; namely, $K_{1}$ and $K_{2}$ of $F$, respectively.

By lifting these representations to $\Gal(\overline{F}/F)$ and applying the work of Hecke and Maa\ss, we obtain a pair of dihedral cuspidal automorphic representations $\pi_{1}$ and $\pi_{2}$ such that $\underline{\delta}(S_{\ast}^{>}(\pi_{1}, \pi_{2})) = \frac{3}{16}$ and $\underline{\delta}(S_{\ast}(\pi_{1}, \pi_{2})) = \frac{3}{8}$.
This demonstrates the sharpness of the bound $\frac{3}{16}$ in Theorem \ref{thm:main_theorem_both_dihedral} and the bound $\frac{3}{8}$ in Theorem \ref{thm:improvements_in_Wong_both_dihedral} in the case where $\pi_{1}$ and $\pi_{2}$ satisfy property P and cannot be induced from the same quadratic extension $K$ of $F$.

\bibliographystyle{alpha}
\bibliography{bibliography}

@article {Walji_Strong_Multiplicity_One_GL2_2014,
    AUTHOR = {Walji, Nahid},
     TITLE = {Further refinement of strong multiplicity one for {$\rm
              GL(2)$}},
   JOURNAL = {Trans. Amer. Math. Soc.},
  FJOURNAL = {Transactions of the American Mathematical Society},
    VOLUME = {366},
      YEAR = {2014},
    NUMBER = {9},
     PAGES = {4987--5007},
      ISSN = {0002-9947,1088-6850},
   MRCLASS = {11F30 (11F41)},
  MRNUMBER = {3217707},
MRREVIEWER = {Shuichiro\ Takeda},
       DOI = {10.1090/S0002-9947-2014-06103-5},
       URL = {https://doi.org/10.1090/S0002-9947-2014-06103-5},
}

@article {Wong_Refinements_Strong_Multiplcity_One_2022,
    AUTHOR = {Wong, Peng-Jie},
     TITLE = {Refinements of strong multiplicity one for {$\rm GL(2)$}},
   JOURNAL = {Math. Res. Lett.},
  FJOURNAL = {Mathematical Research Letters},
    VOLUME = {29},
      YEAR = {2022},
    NUMBER = {2},
     PAGES = {559--598},
      ISSN = {1073-2780,1945-001X},
   MRCLASS = {11F70 (11F30)},
  MRNUMBER = {4492229},
}

@article {Ramakrishnan_Refinement_of_Strong_Multiplcity_One_1994,
    AUTHOR = {Ramakrishnan, Dinakar},
     TITLE = {A refinement of the strong multiplicity one theorem for {${\rm
              GL}(2)$}. {A}ppendix to: ``{$l$}-adic representations
              associated to modular forms over imaginary quadratic fields.
              {II}'' [{I}nvent. {M}ath. {\bf 116} (1994), no. 1-3, 619--643;
              {MR}1253207 (95h:11050a)] by {R}. {T}aylor},
   JOURNAL = {Invent. Math.},
  FJOURNAL = {Inventiones Mathematicae},
    VOLUME = {116},
      YEAR = {1994},
    NUMBER = {1-3},
     PAGES = {645--649},
      ISSN = {0020-9910,1432-1297},
   MRCLASS = {11F80 (11F70 11R34 11R39)},
  MRNUMBER = {1253208},
MRREVIEWER = {Jacques\ Tilouine},
       DOI = {10.1007/BF01231576},
       URL = {https://doi.org/10.1007/BF01231576},
}

@incollection {Serre_modular_forms_Galois_rep_1977,
    AUTHOR = {Serre, Jean-Pierre},
     TITLE = {Modular forms of weight one and {G}alois representations},
 BOOKTITLE = {Algebraic number fields: {$L$}-functions and {G}alois
              properties ({P}roc. {S}ympos., {U}niv. {D}urham, {D}urham,
              1975)},
     PAGES = {193--268},
 PUBLISHER = {Academic Press, London-New York},
      YEAR = {1977},
   MRCLASS = {10D15},
  MRNUMBER = {450201},
MRREVIEWER = {Stephen\ Gelbart},
}

@article {Kim_Shahidi_cuspidality_sym_2002,
    AUTHOR = {Kim, Henry H. and Shahidi, Freydoon},
     TITLE = {Cuspidality of symmetric powers with applications},
   JOURNAL = {Duke Math. J.},
  FJOURNAL = {Duke Mathematical Journal},
    VOLUME = {112},
      YEAR = {2002},
    NUMBER = {1},
     PAGES = {177--197},
      ISSN = {0012-7094,1547-7398},
   MRCLASS = {11F70 (11F30 11R42)},
  MRNUMBER = {1890650},
MRREVIEWER = {A.\ Raghuram},
       DOI = {10.1215/S0012-9074-02-11215-0},
       URL = {https://doi.org/10.1215/S0012-9074-02-11215-0},
}

@article {Jacquet_Shalika_non_vanishing_zeta_1976,
    AUTHOR = {Jacquet, Herv\'{e} and Shalika, Joseph A.},
     TITLE = {A non-vanishing theorem for zeta functions of {${\rm
              GL}\sb{n}$}},
   JOURNAL = {Invent. Math.},
  FJOURNAL = {Inventiones Mathematicae},
    VOLUME = {38},
      YEAR = {1976},
    NUMBER = {1},
     PAGES = {1--16},
      ISSN = {0020-9910,1432-1297},
   MRCLASS = {12A70 (10D20 10H10)},
  MRNUMBER = {432596},
MRREVIEWER = {Mark\ E.\ Novodvorsky},
       DOI = {10.1007/BF01390166},
       URL = {https://doi.org/10.1007/BF01390166},
}

@article {Jacquet_Shalika_on_Euler_product_II_1981,
    AUTHOR = {Jacquet, Herv\'e and Shalika, Joseph A.},
     TITLE = {On {E}uler products and the classification of automorphic
              forms. {II}},
   JOURNAL = {Amer. J. Math.},
  FJOURNAL = {American Journal of Mathematics},
    VOLUME = {103},
      YEAR = {1981},
    NUMBER = {4},
     PAGES = {777--815},
      ISSN = {0002-9327,1080-6377},
   MRCLASS = {10D40 (12A67 22E55)},
  MRNUMBER = {623137},
MRREVIEWER = {Freydoon\ Shahidi},
       DOI = {10.2307/2374050},
       URL = {https://doi.org/10.2307/2374050},
}

@article {Shahidi_L-functions_1981,
    AUTHOR = {Shahidi, Freydoon},
     TITLE = {On certain {$L$}-functions},
   JOURNAL = {Amer. J. Math.},
  FJOURNAL = {American Journal of Mathematics},
    VOLUME = {103},
      YEAR = {1981},
    NUMBER = {2},
     PAGES = {297--355},
      ISSN = {0002-9327,1080-6377},
   MRCLASS = {10D15 (10D40 22E45 22E55)},
  MRNUMBER = {610479},
MRREVIEWER = {Stephen\ Gelbart},
       DOI = {10.2307/2374219},
       URL = {https://doi.org/10.2307/2374219},
}

@article {Gelbart_Jacquet_GL2_GL3_1978,
    AUTHOR = {Gelbart, Stephen and Jacquet, Herv\'{e}},
     TITLE = {A relation between automorphic representations of {${\rm
              GL}(2)$} and {${\rm GL}(3)$}},
   JOURNAL = {Ann. Sci. \'{E}cole Norm. Sup. (4)},
  FJOURNAL = {Annales Scientifiques de l'\'{E}cole Normale Sup\'{e}rieure.
              Quatri\`eme S\'{e}rie},
    VOLUME = {11},
      YEAR = {1978},
    NUMBER = {4},
     PAGES = {471--542},
      ISSN = {0012-9593},
   MRCLASS = {10D40 (12A67 22E55)},
  MRNUMBER = {533066},
MRREVIEWER = {G.\ I.\ Ol\cprime shanski\u{\i}},
       URL = {http://www.numdam.org/item?id=ASENS_1978_4_11_4_471_0},
}

@article {Kim_Shahidi_functoriality_sym3_2002,
    AUTHOR = {Kim, Henry H. and Shahidi, Freydoon},
     TITLE = {Functorial products for {${\rm GL}_2\times{\rm GL}_3$} and the
              symmetric cube for {${\rm GL}_2$}},
      NOTE = {With an appendix by Colin J. Bushnell and Guy Henniart},
   JOURNAL = {Ann. of Math. (2)},
  FJOURNAL = {Annals of Mathematics. Second Series},
    VOLUME = {155},
      YEAR = {2002},
    NUMBER = {3},
     PAGES = {837--893},
      ISSN = {0003-486X,1939-8980},
   MRCLASS = {11F70 (22E55)},
  MRNUMBER = {1923967},
MRREVIEWER = {A.\ Raghuram},
       DOI = {10.2307/3062134},
       URL = {https://doi.org/10.2307/3062134},
}

@article {Kim_functoriality_sym4_2003,
    AUTHOR = {Kim, Henry H.},
     TITLE = {Functoriality for the exterior square of {${\rm GL}_4$} and
              the symmetric fourth of {${\rm GL}_2$}},
      NOTE = {With appendix 1 by Dinakar Ramakrishnan and appendix 2 by Kim
              and Peter Sarnak},
   JOURNAL = {J. Amer. Math. Soc.},
  FJOURNAL = {Journal of the American Mathematical Society},
    VOLUME = {16},
      YEAR = {2003},
    NUMBER = {1},
     PAGES = {139--183},
      ISSN = {0894-0347,1088-6834},
   MRCLASS = {11F70 (11R39 22E46)},
  MRNUMBER = {1937203},
MRREVIEWER = {Mahdi\ Asgari},
       DOI = {10.1090/S0894-0347-02-00410-1},
       URL = {https://doi.org/10.1090/S0894-0347-02-00410-1},
}

@article {Ramakrishnan_modularity_of_Rankin_Selberg_2000,
    AUTHOR = {Ramakrishnan, Dinakar},
     TITLE = {Modularity of the {R}ankin-{S}elberg {$L$}-series, and
              multiplicity one for {${\rm SL}(2)$}},
   JOURNAL = {Ann. of Math. (2)},
  FJOURNAL = {Annals of Mathematics. Second Series},
    VOLUME = {152},
      YEAR = {2000},
    NUMBER = {1},
     PAGES = {45--111},
      ISSN = {0003-486X,1939-8980},
   MRCLASS = {11F70 (11F55 11F66 11F80 11G18 14G10 20G35 22E55)},
  MRNUMBER = {1792292},
MRREVIEWER = {Solomon\ Friedberg},
       DOI = {10.2307/2661379},
       URL = {https://doi.org/10.2307/2661379},
}

@incollection {Gelbart_three_lectures_1997,
    AUTHOR = {Gelbart, Stephen},
     TITLE = {Three lectures on the modularity of {$\overline\rho_{E,3}$}
              and the {L}anglands reciprocity conjecture},
 BOOKTITLE = {Modular forms and {F}ermat's last theorem ({B}oston, {MA},
              1995)},
     PAGES = {155--207},
 PUBLISHER = {Springer, New York},
      YEAR = {1997},
      ISBN = {0-387-94609-8; 0-387-98998-6},
   MRCLASS = {11R39 (11F80)},
  MRNUMBER = {1638479},
}

@book {Langlands_base_change_1980,
    AUTHOR = {Langlands, Robert P.},
     TITLE = {Base change for {${\rm GL}(2)$}},
    SERIES = {Annals of Mathematics Studies},
    VOLUME = {No. 96},
 PUBLISHER = {Princeton University Press, Princeton, NJ; University of Tokyo
              Press, Tokyo},
      YEAR = {1980},
     PAGES = {vii+237},
      ISBN = {0-691-08263-4},
   MRCLASS = {10D40 (10-02 12A67 22E55)},
  MRNUMBER = {574808},
MRREVIEWER = {Stephen\ Gelbart},
}

@article {Tunnell_Artin_conjecture_1981,
    AUTHOR = {Tunnell, Jerrold},
     TITLE = {Artin's conjecture for representations of octahedral type},
   JOURNAL = {Bull. Amer. Math. Soc. (N.S.)},
  FJOURNAL = {American Mathematical Society. Bulletin. New Series},
    VOLUME = {5},
      YEAR = {1981},
    NUMBER = {2},
     PAGES = {173--175},
      ISSN = {0273-0979,1088-9485},
   MRCLASS = {12A67 (10D40 10D45)},
  MRNUMBER = {621884},
MRREVIEWER = {Stephen\ Gelbart},
       DOI = {10.1090/S0273-0979-1981-14936-3},
       URL = {https://doi.org/10.1090/S0273-0979-1981-14936-3},
}

@article {Khare_Wintenberger_serre_modularity_conj_I_2009,
    AUTHOR = {Khare, Chandrashekhar and Wintenberger, Jean-Pierre},
     TITLE = {Serre's modularity conjecture. {I}},
   JOURNAL = {Invent. Math.},
  FJOURNAL = {Inventiones Mathematicae},
    VOLUME = {178},
      YEAR = {2009},
    NUMBER = {3},
     PAGES = {485--504},
      ISSN = {0020-9910,1432-1297},
   MRCLASS = {11F80 (11F11 11F33 11R39)},
  MRNUMBER = {2551763},
MRREVIEWER = {Gabor\ Wiese},
       DOI = {10.1007/s00222-009-0205-7},
       URL = {https://doi.org/10.1007/s00222-009-0205-7},
}

@article {Khare_Wintenberger_serre_modularity_conj_II_2009,
    AUTHOR = {Khare, Chandrashekhar and Wintenberger, Jean-Pierre},
     TITLE = {Serre's modularity conjecture. {II}},
   JOURNAL = {Invent. Math.},
  FJOURNAL = {Inventiones Mathematicae},
    VOLUME = {178},
      YEAR = {2009},
    NUMBER = {3},
     PAGES = {505--586},
      ISSN = {0020-9910,1432-1297},
   MRCLASS = {11F80 (11F11 11F33 11R39)},
  MRNUMBER = {2551764},
MRREVIEWER = {Gabor\ Wiese},
       DOI = {10.1007/s00222-009-0206-6},
       URL = {https://doi.org/10.1007/s00222-009-0206-6},
}

@article {Walji_Hecke_occurence_2014,
    AUTHOR = {Walji, Nahid},
     TITLE = {On the occurrence of {H}ecke eigenvalues and a lacunarity
              question of {S}erre},
   JOURNAL = {Math. Res. Lett.},
  FJOURNAL = {Mathematical Research Letters},
    VOLUME = {21},
      YEAR = {2014},
    NUMBER = {6},
     PAGES = {1465--1482},
      ISSN = {1073-2780,1945-001X},
   MRCLASS = {11F25},
  MRNUMBER = {3335857},
MRREVIEWER = {Lei\ Yang},
       DOI = {10.4310/MRL.2014.v21.n6.a14},
       URL = {https://doi.org/10.4310/MRL.2014.v21.n6.a14},
}

@article {Ramakrishnan_Wang_cuspidality_GL3xGL2_2004,
    AUTHOR = {Ramakrishnan, Dinakar and Wang, Song},
     TITLE = {A cuspidality criterion for the functorial product on {$\rm
              GL(2)\times GL(3)$} with a cohomological application},
   JOURNAL = {Int. Math. Res. Not.},
  FJOURNAL = {International Mathematics Research Notices},
    VOLUME = {2004},
      YEAR = {2004},
    NUMBER = {27},
     PAGES = {1355--1394},
      ISSN = {1073-7928,1687-0247},
   MRCLASS = {11F70 (22E55)},
  MRNUMBER = {2052020},
MRREVIEWER = {A.\ Raghuram},
       DOI = {10.1155/S1073792804132856},
       URL = {https://doi.org/10.1155/S1073792804132856},
}

@incollection {Ramakrishnan_algebraic_cycles_2004,
    AUTHOR = {Ramakrishnan, Dinakar},
     TITLE = {Algebraic cycles on {H}ilbert modular fourfolds and poles of
              {$L$}-functions},
 BOOKTITLE = {Algebraic groups and arithmetic},
     PAGES = {221--274},
 PUBLISHER = {Tata Inst. Fund. Res., Mumbai},
      YEAR = {2004},
      ISBN = {81-7319-618-4},
   MRCLASS = {11F41 (11G40 14C25 14G35)},
  MRNUMBER = {2094113},
MRREVIEWER = {A.\ Raghuram},
}

@article {Chiriac_comparing_Hecke_newforms_2017,
    AUTHOR = {Chiriac, Liubomir},
     TITLE = {Comparing {H}ecke eigenvalues of newforms},
   JOURNAL = {Arch. Math. (Basel)},
  FJOURNAL = {Archiv der Mathematik},
    VOLUME = {109},
      YEAR = {2017},
    NUMBER = {3},
     PAGES = {223--229},
      ISSN = {0003-889X,1420-8938},
   MRCLASS = {11F30 (11F11)},
  MRNUMBER = {3687866},
MRREVIEWER = {Nahid\ Walji},
       DOI = {10.1007/s00013-017-1072-x},
       URL = {https://doi.org/10.1007/s00013-017-1072-x},
}

@article {Chiriac_Jorza_comparing_Hecke_auto_repn_2019,
    AUTHOR = {Chiriac, Liubomir and Jorza, Andrei},
     TITLE = {Comparing {H}ecke coefficients of automorphic representations},
   JOURNAL = {Trans. Amer. Math. Soc.},
  FJOURNAL = {Transactions of the American Mathematical Society},
    VOLUME = {372},
      YEAR = {2019},
    NUMBER = {12},
     PAGES = {8871--8896},
      ISSN = {0002-9947,1088-6850},
   MRCLASS = {11F30 (11F41)},
  MRNUMBER = {4029715},
MRREVIEWER = {Nahid\ Walji},
       DOI = {10.1090/tran/7903},
       URL = {https://doi.org/10.1090/tran/7903},
}

\end{document}